\documentclass[12pt]{article}%
\usepackage{amsmath}
\usepackage{amsfonts}
\usepackage{amssymb}
\usepackage{graphicx}%
\setcounter{MaxMatrixCols}{30}
\providecommand{\U}[1]{\protect\rule{.1in}{.1in}}
\newtheorem{theorem}{Theorem}

\newtheorem{condition}[theorem]{Condition}

\newtheorem{corollary}[theorem]{Corollary}

\newtheorem{definition}[theorem]{Definition}
\newtheorem{example}[theorem]{Example}

\newtheorem{lemma}[theorem]{Lemma}

\newtheorem{proposition}[theorem]{Proposition}
\newtheorem{remark}[theorem]{Remark}

\newenvironment{proof}[1][Proof]{\noindent\textbf{#1.} }{\ \rule{0.5em}{0.5em}}
\ifx\pdfoutput\relax\let\pdfoutput=\undefined\fi
\newcount\msipdfoutput
\ifx\pdfoutput\undefined\else
\ifcase\pdfoutput\else
\msipdfoutput=1
\ifx\paperwidth\undefined\else
\ifdim\paperheight=0pt\relax\else\pdfpageheight\paperheight\fi
\ifdim\paperwidth=0pt\relax\else\pdfpagewidth\paperwidth\fi
\fi\fi\fi
\begin{document}

\title{Notes on the Dirichlet problem of a class of second order elliptic partial
differential equations on a Riemannian manifold}
\date{}
\maketitle

$\bigskip%
\begin{array}
[c]{ccc}%
\text{Jaime Ripoll} & \text{ } & \text{Friedrich Tomi}\\%
\begin{array}
[c]{c}%
\text{Federal Universities of R. G. do Sul}\\
\text{and Santa Maria}%
\end{array}
&  & \text{Heidelberg University}\\
\text{Brazil} &  & \text{Germany}\\
&  &
\end{array}
$%

\[
\text{\textbf{Abstract}}%
\]

In these notes we study the Dirichlet problem for critical points of a convex
functional of the form%
\[
F(u)=\int_{\Omega}\phi\left(  \left\vert \nabla u\right\vert \right)  ,
\]
where $\Omega$ is a bounded domain of a complete Riemannian manifold
$\mathcal{M}.$ We also study the asymptotic Dirichlet problem when
$\Omega=\mathcal{M}$ is a Cartan-Hadamard manifold. Our aim is to present a
unified approach to this problem which comprises the classical examples of the
$p-$Laplacian ($\phi(s)=s^{p}$, $p>1)$ and the minimal surface equation
($\phi(s)=\sqrt{1+s^{2}}$). Our approach does not use the direct method of the
Calculus of Variations which seems to be common in the case of the
$p-$Laplacian. Instead, we use the classical method of a-priori $C^{1}$
estimates of smooth solutions of the Euler-Lagrange equation. These estimates
are obtained by a coordinate free calculus. Degenerate elliptic equations like
the $p-$Laplacian are dealt with by an approximation argument.

These notes address mainly researchers and graduate students interested in
elliptic partial differential equations on Riemannian manifolds and may serve
as a material for corresponding courses and seminars.

\newpage%

\tableofcontents
%

\renewcommand{\contentsname}{SUM\'ARIO}%

\newpage

\section{\label{intr}Introduction}

\qquad In these notes we study the Dirichlet problem for critical points of a
convex functional of the form%
\begin{equation}
F(u)=\int_{\Omega}\phi\left(  \left\vert \nabla u\right\vert \right)  ,
\label{func}%
\end{equation}
where $\Omega$ is a bounded domain of a complete Riemannian manifold
$\mathcal{M}.$ We also study the asymptotic Dirichlet problem when
$\Omega=\mathcal{M}$ is a Cartan-Hadamard manifold.

As minimal conditions on $\phi$ we require that%
\begin{equation}
\left\{
\begin{array}
[c]{l}%
\phi\in C^{1}\left(  \left[  0,\infty\right)  \right)  \cap C^{2}\left(
\left(  0,\infty\right)  \right) \\
\phi^{\prime}(s)>0\text{ and }\phi^{\prime\prime}(s)>0\ \text{for }s>0.
\end{array}
\right.  \label{mc}%
\end{equation}
These conditions imply the strict convexity of $F$ and ensure the ellipticity
of the associated Euler-Lagrange equation.

There is a vast literature on this class of problems, mainly on the Euclidean
space, which we do not discuss here. Our aim is to present a unified approach
to this problem, in the Riemannian setting, which comprises the classical
examples of the $p-$Laplacian ($\phi(s)=s^{p}$, $p>1)$ and the minimal surface
equation ($\phi(s)=\sqrt{1+s^{2}}$). Our approach does not use the
minimization technique of the Calculus of Variations which seems to be common
in the case of the $p-$Laplacian. Instead we use the classical method of
a-priori $C^{1}$ estimates which are obtained from the Euler-Lagrange equation
using a coordinate free calculus. Degenerate elliptic equations like the
$p-$Laplacian are dealt with by an approximation argument.

The $p-$energy and the area are typical representatives for two classes of
functionals which we shall distinguish in what follows. With the abbreviation
$a=\phi^{\prime}$ the Euler-Lagrange equation of $F$ is%
\begin{equation}
Q\left[  u\right]  :=\operatorname{div}\left(  \frac{a(|\nabla u|)}{|\nabla
u|}\nabla u\right)  =0, \label{PDE1}%
\end{equation}
which may be written in the equivalent form%
\begin{equation}
\left\vert \nabla u\right\vert ^{2}\Delta u+b\left(  \left\vert \nabla
u\right\vert \right)  \nabla^{2}u\left(  \nabla u,\nabla u\right)  =0
\label{PDE2}%
\end{equation}
where%
\begin{equation}
b(s)=\frac{sa^{\prime}(s)}{a(s)}-1 \label{be}%
\end{equation}
and $\nabla^{2}$ denotes the Hessian. It follows from (\ref{mc}) that
$1+b(s)>0$ for $s>0.$

As it is well known from the theory of elliptic equations, the behavior of the
eigenvalues of the quadratic form associated with (\ref{PDE2})%
\begin{equation}
q\left(  \xi,\xi\right)  =\left\vert \nabla u\right\vert ^{2}\left\vert
\xi\right\vert ^{2}+b\left(  \left\vert \nabla u\right\vert \right)
\left\langle \xi,\nabla u\right\rangle ^{2} \label{qf}%
\end{equation}
is crucial. Precisely, it is the quotient of the eigenvalue $\lambda$ in
direction $\nabla u$ given by
\[
\lambda=\left\vert \nabla u\right\vert ^{2}\left(  1+b\left(  \left\vert
\nabla u\right\vert \right)  \right)
\]
and the maximal eigenvalue given by%
\[
\Lambda=\left\vert \nabla u\right\vert ^{2}\max\left\{  1,1+b\left(
\left\vert \nabla u\right\vert \right)  \right\}
\]
which is decisive. We may easily see that
\[
\frac{\lambda}{\Lambda}=1+b^{-}%
\]
where $b^{-}=\min\left\{  b,0\right\}  .$ The construction of barriers at the
boundary depends on the behavior of the function $1+b^{-}.$ We consider the
two following possibilities:

\begin{itemize}
\item[Condition I] \emph{Mild decay of the eigenvalue ratio}:%
\begin{equation}
\left(  1+b^{-}(s)\right)  s^{2}\geq\varphi(s),\text{ }s\geq s_{0}>0
\label{fi}%
\end{equation}
where $\varphi$ is non decreasing and%
\begin{equation}
\int_{s_{0}}^{\infty}\frac{\varphi(s)}{s^{2}}ds=+\infty\label{condp}%
\end{equation}

\item[Condition II] \emph{Strong decay of the eingenvalue ratio:}%
\[
\left(  1+b^{-}(s)\right)  s^{2}\geq\varphi(s),\text{ }s\geq s_{0}>0
\]
where $\varphi$ is non increasing and%
\begin{equation}
\int_{s_{0}}^{\infty}\frac{\varphi(s)}{s}ds=+\infty. \label{condm}%
\end{equation}

\end{itemize}

As we will see, with mild decay of the eingenvalue ratio one is able to
construct barriers on arbitrary bounded smooth domains. However, for partial
differential equations with strong decay of the eigenvalue ratio, it is
necessary to require the mean convexity of the domain (see Section \ref{bge}).

Let us mention that the $p-$Laplace equation falls into class I and the
minimal surface equation into class II. Further conditions besides I and II
will have to be imposed for global and local gradient estimates (see Section
\ref{glge}). Let us also mention that the behavior of type I was introduced by
Serrin as \emph{regularly elliptic }(see \cite{S}).

These notes address mainly researchers and graduate students interested in
elliptic partial differential equations on Riemannian manifolds and may serve
as a material for corresponding courses and seminars. Indeed, the first author
gave a course at Federal University of Santa Maria, Rio Grande do Sul, Brazil,
on the second semester of 2017, based on these notes.

Our goal in this text was to carve out structural conditions on the integrand
$\phi$ which lead to global and local $C^{1}-$estimates for solutions of the
corresponding Euler-Lagrange equations. The text gives a complete,
self-contained presentation of this part of the theory; no prerequisites
besides elementary Riemannian geometry are required. Once the crucial $C^{1}%
-$estimates are established a general machinery may be applied to obtain
higher order estimates. For this machinery we refer to the literature
\cite{GT}, \cite{L}, it is not subject of this text.

We believe that the techniques of these notes can be extended to more general
partial differential equations, such as equations with a nonzero right hand
side $Q=f$ with $f$ depending on the point of the manifold, the function and
its first derivatives.

We would like to express our thanks to Roberto Nu\~{n}es for checking part of
this manuscript and for contributing the useful estimate in Remark \ref{rob}.

\newpage

\section{Overview of the technique}

\qquad We resume in this section the main ideas used in these notes to
investigate the Dirichlet problem in bounded smooth domains of a Riemannian manifold.

\subsection{\label{MAPB}The Method of a priori Bounds}

\qquad The case that the partial differential equation (\ref{PDE1}) is
singular or degenerate, as the $p-$Laplacian and a similar family of partial
differential equations, is reduced to the regular case by a perturbation
technique (see Section \ref{et}). The main and largest part of our notes
concerns the existence of solutions of regular partial differential equations.
Regular means that (\ref{PDE1}) is elliptic and has at least
H\"{o}lder-continuous coefficients. To be more precise we write (\ref{PDE1})
in the equivalent form
\[
A\left(  \left\vert \nabla u\right\vert \right)  \Delta u+\frac{A^{\prime
}\left(  \left\vert \nabla u\right\vert \right)  }{\left\vert \nabla
u\right\vert }\nabla^{2}u\left(  \nabla u,\nabla u\right)  =0,\text{
}a(s)=sA(s).
\]
In terms of a local orthonormal frame $E_{1},...,E_{n}$ this equation may be
written as
\[
\sum_{i,j}a_{ij}\left(  \left\vert \nabla u\right\vert \right)  \left(
\nabla^{2}u\right)  _{ij}=0
\]
with%
\[
a_{ij}=A\left(  \left\vert \nabla u\right\vert \right)  \delta_{ij}%
+\frac{A^{\prime}\left(  \left\vert \nabla u\right\vert \right)  }{\left\vert
\nabla u\right\vert }u_{i}u_{j},\text{ }\nabla u=u_{i}E_{i}.
\]
By an elementary but careful computation one sees that for $u\in C^{2}\left(
\overline{\Omega}\right)  $ the coefficients $a_{ij}$ are $\alpha-$H\"{o}lder
continuous provided that $A\in C^{1,\alpha}\left(  \left[  0,\infty\right)
\right)  .$ Moreover, the eigenvalues of the matrix $\left(  a_{ij}\right)  $
are $A(s)$ and $A(s)+sA^{\prime}(s)$ so that ellipticity amounts to the
inequalities%
\[
A(s)>0,\text{ }A(s)+sA^{\prime}(s)>0
\]
for all $s\geq0.$

To investigate the Dirichlet problem for our partial differential equations
which are regular, for \emph{smooth boundary data}, we use the classical
method of a priori bounds. In the abstract setting of Functional Analysis,
this method is conveniently exposed in form of the following fixed point
theorem of Leray-Schauder.

Let $\mathcal{B}$ be a Banach space and denote by $\left\Vert \text{
}\right\Vert $ the norm of $\mathcal{B}$. A continuous mapping
$T:\mathcal{B\rightarrow B}$ is called compact if the image by $T$ of bounded
subsets of $\mathcal{B}$ are precompact that is, their closures are compact
subsets of $\mathcal{B}.$

\begin{theorem}
\label{ls}Let $T:\mathcal{B\rightarrow B}$ be a compact mapping and set
\begin{equation}
V:=\left\{  v\in\mathcal{B}\text{
$\vert$
}v=\sigma T\left(  v\right)  \text{ for some }\sigma\in\left[  0,1\right]
\right\}  . \label{V}%
\end{equation}
Assume that there is a constant $C$ such that
\[
\left\Vert v\right\Vert _{\mathcal{B}}\leq C
\]
for all $v\in V.$ Then $T$ has a fixed point that is, there is $u\in
\mathcal{B}$ such that $T\left(  u\right)  =u.$
\end{theorem}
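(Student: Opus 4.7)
The plan is to reduce the statement to the Schauder fixed point theorem (continuous compact self-maps of a nonempty closed convex set have a fixed point) by means of a radial retraction truncation. Fix any radius $R>C$ and consider the closed ball $\overline{B_R}=\{v\in\mathcal{B}:\|v\|\le R\}$. I would define a modified map $\widetilde T:\overline{B_R}\to\overline{B_R}$ by
\[
\widetilde T(v)=\begin{cases} T(v), & \|T(v)\|\le R,\\[2pt] \dfrac{R}{\|T(v)\|}\,T(v), & \|T(v)\|>R,\end{cases}
\]
which amounts to post-composing $T$ with the (norm-continuous) radial retraction of $\mathcal{B}$ onto $\overline{B_R}$.

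First I would verify the hypotheses of Schauder for $\widetilde T$. Continuity is immediate because the retraction is continuous and $T$ is continuous; the two formulas agree on the interface $\|T(v)\|=R$, so no discontinuity appears there. For compactness: $T(\overline{B_R})$ has compact closure by hypothesis, and the retraction is continuous, so $\widetilde T(\overline{B_R})$ is relatively compact as well. Since $\overline{B_R}$ is nonempty, closed, convex, Schauder's theorem yields a fixed point $u\in\overline{B_R}$ of $\widetilde T$.

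Next I would argue that $u$ must already be a fixed point of $T$ itself. There are two cases depending on the size of $\|T(u)\|$. If $\|T(u)\|\le R$, then by construction $u=\widetilde T(u)=T(u)$, and we are done. If instead $\|T(u)\|>R$, then $u=\sigma T(u)$ with $\sigma:=R/\|T(u)\|\in(0,1)$, so $u\in V$; but then $\|u\|=R>C$, contradicting the a priori bound on elements of $V$. Hence the second case is impossible, and $u$ is the desired fixed point of $T$.

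The only subtle point is the continuity of the retraction-based truncation, which I would check carefully at the interface $\|T(v)\|=R$, and the verification that $\widetilde T$ inherits compactness from $T$; both are straightforward but must be done explicitly. The main conceptual obstacle — producing a fixed point in an infinite-dimensional Banach space — is handed off to Schauder's theorem, which we are entitled to quote as it rests only on Brouwer's theorem combined with finite-dimensional approximation of compact operators. Thus the real content of the argument is the retraction trick, which converts the a priori bound hypothesis into a genuine self-map on a convex set.
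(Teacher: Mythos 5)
Your proof is correct and is essentially the standard one the paper points to: the paper gives no proof of its own but defers to Chapter~11 of Gilbarg--Trudinger, where Theorem~11.3 is proved exactly by this radial-retraction reduction to Schauder's theorem. Your case analysis at the fixed point (either $\|T(u)\|\le R$ and $u=T(u)$, or else $u\in V$ with $\|u\|=R>C$, a contradiction) is precisely the argument there, so the proposal matches the intended proof.
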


This theorem may lastly be derived from Brouwer fixed point theorem (which
asserts that a continuous mapping from a ball of $\mathbb{R}^{n}$ into itself
has a fixed point, see Chapter 11 of \cite{GT}). We now show how one can use
Theorem \ref{ls} for investigating the existence of solutions to the Dirichlet
problem%
\begin{equation}
\left\{
\begin{array}
[c]{l}%
Q\left[  u\right]  =A\left(  \left\vert \nabla u\right\vert \right)  \Delta
u+\frac{A^{\prime}\left(  \left\vert \nabla u\right\vert \right)  }{\left\vert
\nabla u\right\vert }\nabla^{2}u\left(  \nabla u,\nabla u\right)  =0\text{ in
}\Omega\\
u|\partial\Omega=g
\end{array}
\right.  \label{dirr}%
\end{equation}
for a given $g\in C^{2,\alpha}\left(  \overline{\Omega}\right)  ,$ where
$\Omega$ is a $C^{2,\alpha}$ fixed bounded domain of $\mathcal{M}$, $A\left(
\left\vert \nabla u\right\vert \right)  =a\left(  \left\vert \nabla
u\right\vert \right)  /\left\vert \nabla u\right\vert .$ We assume that $Q$ is regular.

In order to apply Theorem \ref{ls} we take $\mathcal{B}=C^{2}\left(
\overline{\Omega}\right)  $ and define the operator $T:C^{2}\left(
\overline{\Omega}\right)  \rightarrow C^{2}\left(  \overline{\Omega}\right)  $
as follows. Any $u\in C^{2}\left(  \overline{\Omega}\right)  $ determines a
linear operator%
\[
\mathcal{L}_{u}\left[  v\right]  =A\left(  \left\vert \nabla u\right\vert
\right)  \Delta v+\frac{A^{\prime}\left(  \left\vert \nabla u\right\vert
\right)  }{\left\vert \nabla u\right\vert }\nabla^{2}v\left(  \nabla u,\nabla
u\right)
\]
which satisfies $\mathcal{L}_{u}\left[  u\right]  =0$ if and only if $Q\left[
u\right]  =0.$ From the regularity assumption of $Q$, $\mathcal{L}_{u}$ is
elliptic and has H\"{o}lder coefficients. It then follows from Theorem 6.14 of
\cite{GT} that the Dirichlet problem%
\[
\left\{
\begin{array}
[c]{l}%
\mathcal{L}_{u}\left[  w\right]  =0\text{ in }\Omega\\
w|\partial\Omega=g
\end{array}
\right.
\]
has a unique solution $w\in C^{2,\alpha}\left(  \overline{\Omega}\right)  .$
We may then define $T\left[  u\right]  =w\Leftrightarrow\mathcal{L}_{u}\left[
w\right]  =0$ and $w|\partial\Omega=g.$ It is clear that $u\in C^{2}\left(
\overline{\Omega}\right)  $ is a solution of (\ref{dirr}) if and only if $u$
is a fixed point of $T.$

To prove that $T$ is a compact operator we first note that $\mathcal{L}_{u}$
satisfies the maximum principle (Theorem 3.5 of \cite{GT}) which implies, in
combination with Theorem 6.6 of \cite{GT}, that if $w\in C^{2.\alpha}\left(
\overline{\Omega}\right)  $ satisfies $\mathcal{L}_{u}\left[  w\right]  =0$
then there is a constant $C$ depending only on a $C^{2}\left(  \overline
{\Omega}\right)  $ bound for $u$ such that%
\[
\left\vert w\right\vert _{C^{2,\alpha}}\leq C\left\vert g\right\vert
_{C^{2,\alpha}}.
\]
Since the embedding of $C^{2,\alpha}\left(  \overline{\Omega}\right)  $ into
$C^{2}\left(  \overline{\Omega}\right)  $ is compact it follows that $T$ maps
bounded subsets into precompact subsets. The continuity of $T$ follows by a
similar argument.

Now, let $v\in V$ where $V$ is given by (\ref{V}). We may assume that $v$ is
nonzero. Then there is $\sigma\in\left(  0,1\right]  $ such that $v=\sigma
T\left[  v\right]  .$ Obviously $v/\sigma\in C^{2,\alpha}\left(
\overline{\Omega}\right)  $ and $\mathcal{L}_{v}\left[  v/\sigma\right]
=\mathcal{L}_{v}\left[  T\left[  v\right]  \right]  =0.$ Hence, by the
linearity of $\mathcal{L}_{v},$ $\mathcal{L}_{v}\left[  v\right]
=\sigma\mathcal{L}_{v}\left[  v/\sigma\right]  =0$ that is, $v$ satisfies
$Q\left[  v\right]  =0$ with $v|_{\partial\Omega}=\sigma g.$ Hence, the
applicability of Theorem \ref{ls} depends on obtaining uniform estimate of the
$C^{2}$ norm of $v$ that is, an upper bound of $\left\vert v\right\vert
_{C^{2}}$ depending only on $\left\vert g\right\vert _{C^{2,\alpha}\left(
\overline{\Omega}\right)  }$ (besides $\Omega$ and $a,$ but not on $\sigma)$.
This is what one calls in partial differential equations theory as
\textquotedblleft a priori\textquotedblright\ estimates since they can be
obtained independently of the existence of a solution of (\ref{dirr}). How one
obtains these estimates in our case is a matter of a preliminary discussion in
the next section.

\subsection{A priori estimates}

\qquad We assume here that $Q$ is regular and that $\Omega$ is a $C^{2,\alpha
}$ bounded domain. A fundamental tool to obtain a priori estimates is the
\emph{comparison principle. }It says that if $v,w\in C^{2}\left(
\Omega\right)  \cap C^{0}\left(  \overline{\Omega}\right)  $ are sub and
supersolutions respectively of $Q$ (that is $Q(u)\geq0$ and $Q(w)\leq0,$ in
the classical or weak sense, see Section \ref{cp} for details), and if
$v|_{\partial\Omega}\leq w|_{\partial\Omega}$ then $v\leq w$ in $\Omega$. In
our case, once we prove a comparison principle for our partial differential
equations, since the constant functions are solutions of $Q\left[  u\right]
=0$ we immediately have%
\[
\inf_{\Omega}g\leq u\leq\sup_{\Omega}g
\]
if $u\in C^{2}\left(  \Omega\right)  \cap C^{0}\left(  \overline{\Omega
}\right)  $ is a solution of (\ref{dirr}). We then have an a priori estimate
for the $C^{0}$ norm%
\begin{equation}
\left\vert u\right\vert _{C^{0}}\leq\sup_{\Omega}\left\vert g\right\vert
\label{c0apri}%
\end{equation}
for any possible solution $u\in C^{2}\left(  \Omega\right)  \cap C^{0}\left(
\overline{\Omega}\right)  $ of (\ref{dirr}).

\subsubsection{\label{mapbs}The case of smooth boundary data}

\qquad We begin by obtaining a priori gradient estimates, first by using
barriers to control the gradient at the boundary of the domain. Barriers at a
given point $p\in\partial\Omega$ are sub and supersolutions $v,w\in
C^{1}\left(  \overline{\Omega}\right)  $ of $Q,$ respectively, such that
\[
v(p)=w(p)=g(p)
\]
and
\[
v(x)\leq g(x)\leq w(x)
\]
for all $x\in\partial\Omega.$ From the comparison principle it then follows
that if $u\in C^{1}\left(  \overline{\Omega}\right)  $ is a solution of
(\ref{dirr}) then
\[
v\leq u\leq w
\]
and, from elementary analysis, it follows that
\[
\left\vert \nabla u(p)\right\vert \leq\max\{|\nabla w(p)|,|\nabla v(p)|\}.
\]

In many cases, when $\mathcal{M}$ has nonnegative Ricci curvature a maximum
principle holds for the norm of the gradient, that is%
\[
\sup_{\Omega}\left\vert \nabla u\right\vert =\sup_{\partial\Omega}\left\vert
\nabla u\right\vert
\]
from which, with the help of barriers, one obtains an a priori $C^{1}$
estimates of a solution of (\ref{dirr}). In a general manifold, where the
Ricci curvature can be negative or change sign, and for the general class of
partial differential equations considered here, the maximum of the gradient
can, in principle, occur at an interior point of the domain. If this happens
we prove that the gradient at such a point is controlled by the $C^{0}$ norm
of the solution. Together with the boundary gradient estimate we then obtain
an a priori $C^{1}$ estimate of a solution of (\ref{dirr}).

Next we need H\"{o}lder estimates for the gradient. For this step we refer to
well established theories in the literature (see \cite{GT}, Theorem 13.2). We
may then assert that there is $\gamma>0$ such that $u$ has an a priori
$C^{1,\gamma}$ norm bound in $\overline{\Omega}$ $($with $\gamma$ depending
only on $\left\vert u\right\vert _{C^{1}}$ and hence only on $\left\vert
g\right\vert _{C^{2,\alpha}})$ and therefore the coefficients of the operator
$\mathcal{L}_{u}$ have uniformly bounded norm in $C^{\alpha\gamma}\left(
\overline{\Omega}\right)  .$ We may then again apply the linear theory (see
Theorem 6.6 of \cite{GT}) to the equation%
\[
\mathcal{L}_{u}\left[  u\right]  =0,\text{ }u|_{\partial\Omega}=\sigma g
\]
to obtain an a priori $C^{2}\left(  \overline{\Omega}\right)  $ bound for $u.$

We have seen that the solvability of the Dirichlet problem (\ref{dirr})
reduces to proving a comparison principle for $Q$ and obtaining global a
priori estimates for the gradient. This will be done in the next sections. It
remains to consider the Dirichlet problem (\ref{dirr}) for continuous boundary
data. The main points of this are discussed in the next section.

\subsubsection{\label{mapbc}The case of continuous boundary data}

\qquad We assume that $Q$ is regular and that (\ref{dirr}) is solvable for
$C^{2,\alpha}$ boundary data. To study the problem (\ref{dirr}) in the case
that $\Omega$ is bounded and of $C^{2,\alpha}$ class but $g$ is only
continuous we need to obtain a priori \emph{local }gradient estimates. We
begin by obtaining a maximum principle for the difference of two solutions
(Proposition \ref{maxp}) that is, if $u,v\in C^{2}\left(  \Omega\right)  \cap
C^{0}\left(  \overline{\Omega}\right)  $ satisfy $Q\left[  u\right]  =Q\left[
v\right]  =0$ in $\Omega$ then%
\begin{equation}
\sup_{\Omega}\left\vert u-v\right\vert =\sup_{\partial\Omega}\left\vert
u-v\right\vert . \label{max}%
\end{equation}

We then consider a sequence $g_{k}\in C^{2,\alpha}\left(  \overline{\Omega
}\right)  $ converging in the $C^{0}$ norm to $g$. From the maximum principle
the corresponding solutions $u_{k}\in C^{2}\left(  \overline{\Omega}\right)  $
to the Dirichlet problem with boundary data $g_{k}$ satisfy%
\[
\sup_{\Omega}\left\vert u_{k}-u_{j}\right\vert =\sup_{\Omega}\left\vert
g_{k}-g_{j}\right\vert ,\text{ }j,k\in\mathbb{N},
\]
being hence a Cauchy sequence on $C^{0}\left(  \overline{\Omega}\right)  .$
The sequence $\left(  u_{k}\right)  $ then converges in the $C^{0}$ norm to
some $u\in C^{0}\left(  \overline{\Omega}\right)  .$ To prove that in fact
$u\in C^{2}\left(  \Omega\right)  $ we obtain local gradient estimates\emph{.
}This is done by fixing an arbitrary $x\in\Omega$ and choosing $r>0$ such that
the closed geodesic ball $B_{r}(x)$ is contained in $\Omega$ and is normal.
Then we prove that there is a constant $C$ depending only on $a,$ $r,g$ such
that%
\[
\left\vert \nabla u_{k}(x)\right\vert \leq C
\]
for all $k\in\mathbb{N}.$ Then, as before, we can make use of Theorem 13.2 of
\cite{GT} to get an uniform $C^{1,\gamma}-$norm bound of $\left(
u_{k}\right)  $ for some $\gamma>0\ $and the linear theory (Theorem 6.6 of
\cite{GT}) to prove that the sequence $\left(  u_{k}\right)  $ has equibounded
$C^{2,\gamma\alpha}$ norm on any relatively compact subdomain of $\Omega.$
Therefore, it contains a subsequence converging in the $C^{2}$ norm to $u$ on
any such subdomain and thus $u\in C^{2}\left(  \Omega\right)  .$ By the
continuity of $Q:C^{2}\left(  \Omega\right)  \rightarrow C^{0}\left(
\Omega\right)  $ it follows that $Q\left[  u\right]  =0.$

The boundary gradient estimates, the gradient estimates at interior points for
smooth boundary data, and the local gradient estimates are obtained in the
next sections.

\newpage

\section{Gradient estimates}

\qquad In this section we derive global and local estimates for solutions for
regular partial differential equations under the assumption that the solutions
are of $C^{3}$ class.

\subsection{\label{cp}Comparison and maximum principles}

Let $\Omega$ be a domain in $\mathcal{M}$, $\overline{\Omega}$ compact. We
assume that
\[
Q\left[  u\right]  =\operatorname{div}\left(  \frac{a\left(  \left\vert \nabla
u\right\vert \right)  }{\left\vert \nabla u\right\vert }\nabla u\right)
\]
is such that $a:\left[  0,\infty\right)  \rightarrow\mathbb{R}$ is strictly
increasing and $a(0)=0.$

Denote by $C_{0}^{0,1}(\Omega)$ the space of Lipschitz functions which compact
support on $\Omega.$ We say that $u\in C^{0,1}(\Omega)\cap C^{0}\left(
\overline{\Omega}\right)  $ is a weak solution of $Q$ if%
\begin{equation}
\int_{\Omega}\left\langle \frac{a\left(  |\nabla u|\right)  }{|\nabla
u|}\nabla u,\nabla\xi\right\rangle dx=0 \label{ws}%
\end{equation}
for all $\xi\in C_{0}^{0,1}(\Omega).$ We say that $v\in C^{0,1}(\Omega)\cap
C^{0}\left(  \overline{\Omega}\right)  $ is a weak supersolution (subsolution)
of $Q$ if (\ref{ws}) holds with \textquotedblleft$\geq$\textquotedblright%
\ (\textquotedblleft$\leq$\textquotedblright) instead of \textquotedblleft%
$=$\textquotedblright\ for all $\xi\in C_{0}^{0,1}(\Omega)$ with $\xi\geq0$
a.e$.$ on $\Omega.$

\begin{proposition}
[Comparison Principle]\label{QvleQu} Let $\Omega\subset\mathcal{M}$ be an open
set, $u\in C^{0,1}(\Omega)\cap C^{0}\left(  \overline{\Omega}\right)  $ a weak
subsolution of $Q$ and $v\in C^{0,1}(\Omega)\cap C^{0}\left(  \overline
{\Omega}\right)  $ a weak supersolution of $Q\ $such that
\begin{equation}
\lim\sup_{k}\left(  u(x_{k})-v(x_{k})\right)  \leq0 \label{bcp}%
\end{equation}
for any sequence $x_{k}$ in $\Omega$ which leaves any compact subset of
$\Omega.$ Then it follows that $u\leq v$ in $\Omega.$
\end{proposition}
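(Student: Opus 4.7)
The strategy is the standard Browder/Minty monotonicity argument, testing the weak formulations against a truncation of $u-v$. Fix $\varepsilon>0$ and set $\xi := (u-v-\varepsilon)^{+} = \max(u-v-\varepsilon,0)$. The hypothesis (\ref{bcp}) implies that $K_\varepsilon := \{x\in\Omega : u(x)-v(x)\geq \varepsilon\}$ is a compact subset of $\Omega$: otherwise one would extract a sequence $x_k\in K_\varepsilon$ leaving every compact subset with $u(x_k)-v(x_k)\geq\varepsilon>0$, contradicting (\ref{bcp}). Since $u,v\in C^{0,1}(\Omega)$, $\xi$ is Lipschitz with compact support in $\Omega$ and $\xi\geq 0$, so it is admissible in the weak formulations.

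Plugging $\xi$ into the subsolution inequality for $u$ and the supersolution inequality for $v$ and subtracting yields
\[
\int_{\Omega}\left\langle \frac{a(|\nabla u|)}{|\nabla u|}\nabla u-\frac{a(|\nabla v|)}{|\nabla v|}\nabla v,\ \nabla\xi\right\rangle dx\leq 0.
\]
On $\{\xi>0\}$ we have $\nabla\xi=\nabla u-\nabla v$ a.e., and $\nabla\xi=0$ a.e. on $\{\xi=0\}$. The crucial algebraic lemma is the strict monotonicity of the vector field $A(p):=a(|p|)p/|p|$ (extended by $A(0)=0$): expanding the inner product and applying Cauchy-Schwarz,
\[
\langle A(p)-A(q),p-q\rangle\geq \bigl(a(|p|)-a(|q|)\bigr)\bigl(|p|-|q|\bigr)\geq 0,
\]
with strict inequality unless $p=q$ (Cauchy-Schwarz equality forces $p,q$ to be parallel and of the same direction, while the second factor and strict monotonicity of $a$ force $|p|=|q|$).

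Consequently the integrand above is nonnegative a.e.\ on $\Omega$, and the integral being $\leq 0$ forces it to vanish a.e.\ on $\{\xi>0\}$; by the strict monotonicity just proven, $\nabla u=\nabla v$ a.e.\ on $\{u-v>\varepsilon\}$. The continuous function $u-v$ therefore has vanishing weak gradient on each connected component of this open set, hence is constant there; but the constant must equal $\varepsilon$ by continuity at the boundary of the component (contained in $\Omega$ by the compactness of $K_\varepsilon$), contradicting $u-v>\varepsilon$ in the interior unless the component is empty. Thus $u\leq v+\varepsilon$ on $\Omega$, and sending $\varepsilon\downarrow 0$ concludes the proof.

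The main obstacle is establishing the strict monotonicity of $A$ with its sharp equality case, and handling the degenerate loci $\{\nabla u=0\}\cup\{\nabla v=0\}$ where the normalized vector field $a(|p|)p/|p|$ is only defined by continuous extension; both are handled by the Cauchy-Schwarz computation above, noting that at $p=0$ or $q=0$ the inequality reduces to $\langle A(p),p\rangle=a(|p|)|p|\geq 0$ with equality only at $p=0$.
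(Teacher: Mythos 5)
Your proof is correct and follows the same approach as the paper's: test the weak inequalities with the truncation $(u-v-\varepsilon)^{+}$, expand the monotone difference via Cauchy--Schwarz to obtain $(a(|\nabla u|)-a(|\nabla v|))(|\nabla u|-|\nabla v|)\geq 0$, and conclude. You make the strict monotonicity of the vector field $A$ and its equality case fully explicit (obtaining $\nabla u=\nabla v$ a.e.\ directly), whereas the paper first deduces $|\nabla u|=|\nabla v|$ and then revisits the integral to get $\nabla\zeta=0$, finishing via compact support of $\zeta$ rather than your connected-component argument; these are minor variations of the same reasoning.
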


\begin{proof}
Let $\varepsilon>0$ and let us choose
\[
\zeta:=\left(  u-v-\varepsilon\right)  ^{+}=\max\left\{  u-v-\varepsilon
,0\right\}  .
\]
From (\ref{bcp}) it follows that $u$ and $v$ have bounded first derivatives on
the support of $\zeta,$ a compact subset of $\Omega,$ and
\[
\nabla\zeta=\left\{
\begin{array}
[c]{l}%
\nabla\left(  u-v\right)  \text{ if }u-v>\varepsilon\\
0\text{ elsewhere.}%
\end{array}
\right.
\]
Therefore $\left\vert \nabla u\right\vert $ and $\left\vert \nabla
v\right\vert $ are integrable on the set
\[
\Lambda_{\varepsilon}:=\left\{  x\in\Omega\text{
$\vert$
}u(x)-v(x)>\varepsilon\right\}
\]
and we have%

\begin{equation}
\int_{\Lambda_{\varepsilon}}\left\langle \frac{a(|\nabla u|)}{|\nabla
u|}\nabla u-\frac{a(|\nabla v|)}{|\nabla v|}\nabla v,\nabla u-\nabla
v\right\rangle dx\leq0. \label{integralnegativa}%
\end{equation}

On the other hand,%

\begin{gather*}
\left\langle \frac{a(|\nabla u|)}{|\nabla u|}\nabla u-\frac{a(|\nabla
v|)}{|\nabla v|}\nabla v,\nabla u-\nabla v\right\rangle \\
=a(|\nabla u|)|\nabla u|^{2}-\frac{a(|\nabla u|)}{|\nabla u|}\langle\nabla
u,\nabla v\rangle-\frac{a(|\nabla v|)}{|\nabla v|}\langle\nabla u,\nabla
v\rangle+a(|\nabla v|)\left\vert \nabla v\right\vert \\
\geq a(|\nabla u|)|\nabla u|-a(|\nabla u|)|\nabla v|-a(|\nabla v|)|\nabla
u|+a(|\nabla v|)|\nabla v|\\
=\left(  a(|\nabla u|)-a(|\nabla v|)\right)  \left(  |\nabla u|-|\nabla
v|\right)  ,
\end{gather*}
where the inequality is implied by Cauchy-Schwarz inequality. Since $a$ is
increasing it follows from (\ref{integralnegativa}) that $\left\vert \nabla
u\right\vert =\left\vert \nabla v\right\vert $ a.e. on $\Lambda_{\varepsilon}%
$. From this, in connection with (\ref{integralnegativa}), we conclude that
$\nabla\zeta=0$ a.e. on $\Omega.$ It follows then, again from
(\ref{integralnegativa}), that $\zeta=0$ since $\zeta\in C_{0}^{0,1}(\Omega)$.
We conclude that $u-v\leq0$ in $\Omega$ since $\varepsilon>0$ is arbitrary,
concluding with the proof of the proposition.
\end{proof}

\begin{proposition}
[Maximum Principle]\label{maxp} Let $\Omega\subset\mathcal{M}$ be an open
bounded and $u,v\in C^{0,1}(\Omega)\cap C^{0}\left(  \overline{\Omega}\right)
$ be weak solutions of $Q.$ Then%
\begin{equation}
\max_{\Omega}\left\vert u-v\right\vert =\max_{\partial\Omega}\left\vert
u-v\right\vert . \label{mp}%
\end{equation}
In particular, since $v=0$ is a solution we have the maximum principle%
\[
\max_{\Omega}\left\vert u\right\vert =\max_{\partial\Omega}\left\vert
u\right\vert .
\]

\end{proposition}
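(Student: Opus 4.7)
The plan is to derive this maximum principle as a direct corollary of the Comparison Principle (Proposition \ref{QvleQu}), exploiting the fact that the operator $Q$ depends only on $\nabla u$, so it is invariant under the addition of constants.

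First I would set $M := \max_{\partial\Omega}(u-v)$ and consider the function $\tilde v := v + M$. Since $Q[w]$ is defined via $\operatorname{div}(a(|\nabla w|)/|\nabla w| \, \nabla w)$ and shifting $w$ by a constant does not change $\nabla w$, the weak formulation (\ref{ws}) shows at once that $\tilde v$ is again a weak solution of $Q$ in $\Omega$, with $\tilde v \in C^{0,1}(\Omega)\cap C^0(\overline\Omega)$.

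Next I would verify the boundary hypothesis (\ref{bcp}) of the comparison principle for the pair $(u,\tilde v)$. Because $\Omega$ is bounded and $u,v$ extend continuously to $\overline\Omega$, any sequence $x_k\in\Omega$ leaving every compact subset of $\Omega$ has all its accumulation points on $\partial\Omega$, so
\[
\limsup_k \bigl(u(x_k) - \tilde v(x_k)\bigr) \leq \max_{\partial\Omega}(u-v) - M = 0.
\]
Proposition \ref{QvleQu} then yields $u \leq \tilde v = v + M$ throughout $\Omega$, i.e.\ $\sup_\Omega(u-v) \leq \max_{\partial\Omega}(u-v)$. Interchanging the roles of $u$ and $v$ gives the symmetric bound $\sup_\Omega(v-u) \leq \max_{\partial\Omega}(v-u)$, and combining the two inequalities produces $\max_\Omega|u-v| \leq \max_{\partial\Omega}|u-v|$. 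The reverse inequality is immediate since $\partial\Omega \subset \overline\Omega$, and the special case $v\equiv 0$ gives the stated maximum principle for $u$ alone.

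I do not anticipate a serious obstacle here: the only slightly delicate point is checking that condition (\ref{bcp}) is satisfied by the shifted pair, which reduces to the continuous extension of $u$ and $v$ to the boundary together with the boundedness of $\Omega$. All the analytic content is already absorbed in the comparison principle.
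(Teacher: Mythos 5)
Your proof is correct and follows essentially the same route as the paper: both reduce the statement to the Comparison Principle (Proposition \ref{QvleQu}) applied to $u$ and the constant shift $v+M$, using that $Q$ depends only on the gradient. You are somewhat more explicit than the paper in verifying the hypothesis (\ref{bcp}) from continuity on $\overline\Omega$ and boundedness of $\Omega$, but the content is identical.
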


\begin{proof}
Set
\[
M:=\max_{\partial\Omega}\left\vert u-v\right\vert .
\]
Then, from the Comparison Principle%
\[
u=u-v+v\leq v+M.
\]
Reversing the roles of $u$ and $v$ we get $v\leq u+M$ from which we get
(\ref{mp}).
\end{proof}

\newpage

\subsection{\label{bge}Boundary gradient estimates. Barriers.}

\qquad In this section we assume that $\Omega$ is a bounded domain of $C^{2}$
class in $\mathcal{M}$. We follow Serrin's treatment with some simplifications
\cite{S}. The cases of mild and strong decay of the eigenvalue ratio have to
be treated separately. Nevertheless the type of barrier that we use, defined
in what follows, will be the same in both cases.

We fix a number $\delta_{0}>0$ such that the function%
\[
d(x)=\operatorname*{distance}(x,\partial\Omega)
\]
is of $C^{2}$ class on the boundary strip%
\[
\overline{\Omega}_{\delta_{0}}=\left\{  x\in\overline{\Omega}\text{
$\vert$
}0\leq d(x)\leq\delta_{0}\right\}
\]
and we seek barriers of the form%
\[
w=g+f(d)
\]
where the function $f$ is defined in some interval $\left[  0,\delta\right]
,$ $0<\delta\leq\delta_{0},$ $f(0)=0,$ and $w$ is a supersolution with
$f(\delta)=M$ or a subsolution with $f(\delta)=-M,$ $M>0$ a preassigned number.

To simplify later calculations we introduce the linear operator%
\[
\mathcal{L}_{w}v=\Delta v+b\left(  \left\vert \nabla w\right\vert \right)
\nabla^{2}v\left(  \frac{\nabla w}{\left\vert \nabla w\right\vert }%
,\frac{\nabla w}{\left\vert \nabla w\right\vert }\right)
\]
which satisfies
\begin{equation}
\left\vert \mathcal{L}_{w}v\right\vert \leq\left(  n-1+1+b\right)  \left\vert
\nabla^{2}v\right\vert \leq nB\left\vert \nabla^{2}v\right\vert \label{estele}%
\end{equation}
with $B=\max\left\{  1,1+b\right\}  .$ Using (\ref{PDE2}), (\ref{be}) and the
equality
\begin{equation}
\nabla^{2}u\left(  \nabla u,\nabla u\right)  =\frac{1}{2}\left\langle
\nabla\left\vert \nabla u\right\vert ^{2},\nabla u\right\rangle \label{hesu}%
\end{equation}
we may write the operator $Q$ as%
\begin{equation}
Q\left[  w\right]  =\mathcal{L}_{w}g+f^{\prime}\mathcal{L}_{w}d+f^{\prime
\prime}\left(  1+b\left\langle \nabla d,\frac{\nabla w}{\left\vert \nabla
w\right\vert }\right\rangle ^{2}\right)  . \label{ele}%
\end{equation}

We shall also need the following obvious estimates of $\left\vert \nabla
w\right\vert $ in terms of $f^{\prime}$:%
\begin{equation}
f^{\prime}-c_{1}\leq\left\vert \nabla w\right\vert \leq f^{\prime}+c_{1}
\label{c1}%
\end{equation}
where $c_{1}=\max_{\Omega_{\delta_{0}}}\left\vert \nabla g\right\vert $ and
hence%
\begin{equation}
\frac{2}{3}f^{\prime}\leq\left\vert \nabla w\right\vert \leq\frac{4}%
{3}f^{\prime} \label{doister}%
\end{equation}
provided that
\begin{equation}
f^{\prime}\geq\alpha\geq\max\left\{  1,3c_{1}\right\}  \label{alf}%
\end{equation}
where the number $\alpha$ will be appropriately chosen later on. We construct
only a supersolution and assume then that $f^{\prime\prime}\leq0.$

\subsubsection{The case of mild decay of the eigenvalue ratio}

\qquad As mentioned at the introduction, this class falls into the Serrin's
category of \textquotedblleft regularly elliptic\textquotedblright\ equations
(see \cite{S}). We have%
\begin{gather}
1+b\left\langle \nabla d,\frac{\nabla w}{\left\vert \nabla w\right\vert
}\right\rangle ^{2}\geq\left(  1+b\right)  \left\langle \nabla d,\frac{\nabla
w}{\left\vert \nabla w\right\vert }\right\rangle ^{2}\nonumber\\
=\frac{1+b}{f^{\prime2}}\left\langle \nabla w-\nabla g,\frac{\nabla
w}{\left\vert \nabla w\right\vert }\right\rangle ^{2}\geq\frac{1+b}%
{f^{\prime2}}\left(  \left\vert \nabla w\right\vert -c_{1}\right)
^{2}\label{set}\\
\geq\frac{1}{4}\frac{1+b}{f^{\prime2}}\left\vert \nabla w\right\vert
^{2}\nonumber
\end{gather}
on account of (\ref{doister}) and (\ref{alf}). Inserting into (\ref{ele}),
recalling that $f^{\prime\prime}\leq0$ and observing (\ref{estele}) we obtain%
\[
\frac{4}{f^{\prime}B}Q\left[  w\right]  \leq4n\left(  \left\vert \nabla
^{2}g\right\vert +\left\vert \nabla^{2}d\right\vert \right)  +\frac
{f^{\prime\prime}}{f^{\prime3}}\frac{b+1}{B}\left\vert \nabla w\right\vert
^{2}.
\]

Setting
\[
C=4n\max_{\overline{\Omega}_{\delta}}\left(  \left\vert \nabla^{2}g\right\vert
+\left\vert \nabla^{2}d\right\vert \right)
\]
and observing that
\[
\frac{b+1}{B}=1+b^{-}%
\]
we obtain from (\ref{fi}) and (\ref{condp})%
\begin{equation}
\frac{4}{f^{\prime}B}Q\left[  w\right]  \leq C+\frac{f^{\prime\prime}%
}{f^{\prime3}}\varphi\left(  \frac{2}{3}f^{\prime}\right)  . \label{prebar}%
\end{equation}
in the last step having used (\ref{doister}) and the fact that $\varphi$ is
non-decreasing. Our task will be complete if we can find a solution to the
ordinary differential equation%
\begin{equation}
f^{\prime\prime}+C\frac{f^{\prime3}}{\varphi\left(  \frac{2}{3}f^{\prime
}\right)  }=0 \label{ode}%
\end{equation}
which is defined in some interval $\left[  0,\delta,\right]  $ to be
explicitly determined later, $0\leq\delta\leq\delta_{0},$ and satisfies
$f(0)=0,$ $f(\delta)=M$ and $f^{\prime}\left(  \delta\right)  =\alpha$ where
$M$ is a given positive number and now $\alpha$ is chosen as
\begin{equation}
\alpha=\max\left\{  \frac{M}{\delta_{0}},1,3c_{1}\right\}  . \label{alpha}%
\end{equation}

We rewrite (\ref{ode}) as an equation for the inverse function of $f^{\prime
},$ denoted by $h,$ that is%
\[
h^{\prime}(s)=-\frac{\varphi\left(  \frac{2}{3}s\right)  }{Cs^{3}},
\]
leading to%
\[
h(s)=\int_{s}^{\beta}\frac{\varphi\left(  \frac{2}{3}t\right)  }{Ct^{3}%
}dt,\text{ }\alpha\leq s\leq\beta,
\]
where $\beta$ is still to be determined.

The domain of the definition of $f$ is the interval $\left[  h(\beta
),h(\alpha)\right]  =\left[  0,\delta\right]  $ with%
\[
\delta=\int_{\alpha}^{\beta}\frac{\varphi\left(  \frac{2}{3}t\right)  }%
{Ct^{3}}dt
\]
and%
\begin{align*}
f(\delta)  &  =\int_{0}^{\delta}f^{\prime}(s)ds=-\int_{\beta}^{\alpha}%
t\frac{\varphi\left(  \frac{2}{3}t\right)  }{Ct^{3}}dt\\
&  =\int_{\alpha}^{\beta}\frac{\varphi\left(  \frac{2}{3}t\right)  }{Ct^{2}%
}dt.
\end{align*}

Due to (\ref{condp}) we may now choose $\beta$ and hence $\delta$ so that
$f(\delta)=M$. Moreover,
\[
\delta=\int_{\alpha}^{\beta}\frac{\varphi\left(  \frac{2}{3}t\right)  }%
{Ct^{3}}dt\leq\frac{1}{\alpha}\int_{\alpha}^{\beta}\frac{\varphi\left(
\frac{2}{3}t\right)  }{Ct^{2}}dt=\frac{M}{\alpha}\leq\delta_{0}%
\]
where we used that $\alpha\geq1$ for the first inequality and that $\alpha\geq
M/\delta_{0}$ for the second one. Replacing $f$ by $-f$ we obtain a
subsolution. This completes the construction of barriers for class of partial
differential equations with mild decay of the eigenvalue ratio.

\subsubsection{The case of strong decay of the eingenvalue ratio}

\qquad In this case it becomes necessary to restrict the geometry of
$\partial\Omega;$ we require that the mean curvature of $\partial\Omega$ as
well as of the level hypersurfaces of $d,$ $0\leq d\leq\delta_{0},$ is
nonnegative with respect to the normal vector $\nabla d.$ This is equivalent
to the condition%
\begin{equation}
\Delta d\leq0\text{ in }\overline{\Omega}_{\delta_{0}}. \label{mconv}%
\end{equation}

Since $\nabla^{2}d\left(  \nabla d,\nabla d\right)  =0$ we then obtain%
\begin{align*}
f^{\prime}\mathcal{L}_{w}d  &  \leq f^{\prime}b\nabla^{2}d\left(  \frac{\nabla
w}{\left\vert \nabla w\right\vert },\frac{\nabla w}{\left\vert \nabla
w\right\vert }\right) \\
&  \leq B\frac{f^{\prime2}}{\left\vert \nabla w\right\vert ^{2}}\left\vert
2\nabla^{2}d\left(  \nabla d,\nabla g\right)  +\frac{1}{f^{\prime}}\nabla
^{2}d\left(  \nabla g,\nabla g\right)  \right\vert \\
&  \leq Bc_{0}%
\end{align*}
where the constant $c_{0}$ only depends on
\[
\max_{\overline{\Omega}_{\delta_{0}}}\left(  \left\vert \nabla g\right\vert
+\left\vert \nabla^{2}d\right\vert \right)
\]
and (\ref{alf}) is assumed to hold further on. Inserting this last estimate
and (\ref{set}) in (\ref{ele}) we arrive at
\begin{align*}
\frac{4}{B}Q\left[  w\right]   &  \leq C+\frac{f^{\prime\prime}}{f^{\prime2}%
}\frac{1+b}{B}\left\vert \nabla w\right\vert ^{2}\\
&  \leq C+\frac{f^{\prime\prime}}{f^{\prime2}}\varphi\left(  \left\vert \nabla
w\right\vert \right) \\
&  \leq C+\frac{f^{\prime\prime}}{f^{\prime2}}\varphi\left(  \frac{4}%
{3}f^{\prime}\right)
\end{align*}
where the constant $C$ depends only on
\[
\max_{\overline{\Omega}_{\delta_{0}}}\left(  \left\vert \nabla g\right\vert
+\left\vert \nabla^{2}g\right\vert +\left\vert \nabla^{2}d\right\vert \right)
\]
and we used (\ref{doister}), (\ref{condm}) and the fact that $\varphi$ is
non-increasing. We again choose $\alpha$ according to (\ref{alpha}) and
consider $h,$ the inverse function of $f^{\prime},$ which is given by
\[
h(s)=C\int_{s}^{\beta}\frac{\varphi\left(  \frac{4t}{3}\right)  }{t^{2}%
}dt,\text{ }\alpha\leq s\leq\beta.
\]

We obtain%
\[
\delta=C\int_{\alpha}^{\beta}\frac{\varphi\left(  \frac{4t}{3}\right)  }%
{t^{2}}dt
\]
and
\[
f\left(  \delta\right)  =C\int_{\alpha}^{\beta}\frac{\varphi\left(  \frac
{4t}{3}\right)  }{t}dt.
\]

Condition \ref{condm} allows to choose $\beta$ such that $f(\delta)=M$ and, as
before,
\[
\delta\leq\frac{M}{\alpha}\leq\delta_{0}%
\]
and the barrier construction for the class of minimal surface equation is complete.

\bigskip

From the previous calculations and also (\ref{c0apri}), we obtain:

\begin{theorem}
\label{bgem}Let $\Omega$ be a bounded domain of class $C^{2}$ in $\mathcal{M}$
and $u\in C^{1}\left(  \overline{\Omega}\right)  $ be a weak solution of
(\ref{PDE1}) such that $u=g$ on $\partial\Omega$ with $g\in C^{2}\left(
\overline{\Omega}\right)  .$ We assume that either Condition I or II of
Section \ref{intr} are satisfied and in case that Condition II holds we
require furthermore that the mean curvature of $\partial\Omega$ with respect
to the interior normal of $\partial\Omega$ as well as of the inner parallel
hypersurfaces of $\partial\Omega$ in some neighborhood of $\partial\Omega$ is
non negative. Then the normal derivative of $u$ on $\partial\Omega$ can be
estimated by a constant depending only on $\left\vert g\right\vert
_{C_{2}\left(  \Omega\right)  }$.
\end{theorem}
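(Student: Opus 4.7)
The plan is to assemble the boundary gradient estimate directly from the two barrier constructions carried out in the preceding subsections, combined with the Comparison Principle (Proposition \ref{QvleQu}) and the $C^0$ bound (\ref{c0apri}). The strategy is standard: produce a supersolution $w$ and a subsolution $v$ that sandwich $u$ in a boundary strip and agree with $u$ on $\partial\Omega$; the normal derivative of $u$ is then squeezed between those of $v$ and $w$.

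First I would invoke the $C^0$ bound: since constants are solutions of $Q$, the comparison principle yields $|u|\leq\max_{\partial\Omega}|g|$, so that the choice $M:=2\max_{\overline{\Omega}}|g|$ guarantees $|u-g|\leq M$ throughout $\Omega$. With this $M$ in hand, the preceding two subsections (under Condition I, or under Condition II together with the assumed nonnegativity of the mean curvature of $\partial\Omega$ and of its inner parallel hypersurfaces, which is the hypothesis $\Delta d\leq 0$ used in (\ref{mconv})) produce a function $f\in C^2([0,\delta])$ with $f(0)=0$, $f(\delta)=M$, $f'>0$, $f''\leq 0$ such that $w=g+f(d)$ is a supersolution of $Q$ on the boundary strip $\Omega_{\delta}\subset\Omega_{\delta_0}$. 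On $\partial\Omega$ one has $w=g=u$, and on the inner parallel hypersurface $\{d=\delta\}$ one has $w=g+M\geq u$, so the Comparison Principle applied on $\Omega_{\delta}$ gives $u\leq w$ there. Replacing $f$ by $-f$ produces a subsolution $v=g-f(d)$ with $v\leq u$.

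Next, fix $p\in\partial\Omega$. Since $v(p)=u(p)=w(p)=g(p)$ and $v\leq u\leq w$ in $\Omega_\delta$, differentiating in the inward normal direction $\nabla d(p)$ yields
\[
|\partial_\nu u(p)|\leq\max\{|\partial_\nu v(p)|,|\partial_\nu w(p)|\}\leq|\nabla g(p)|+f'(0).
\]
Because the tangential part of $\nabla u(p)$ coincides with that of $\nabla g(p)$, this controls the full gradient $|\nabla u(p)|$ by $|\nabla g|_{C^0(\overline{\Omega})}+f'(0)$.

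It remains to check that $f'(0)=\beta$ is controlled by $|g|_{C^2(\overline{\Omega})}$ (and the fixed geometric data of $\Omega$ and of $\varphi$). This is immediate from the ODE construction: $\beta$ is determined implicitly by $f(\delta)=M$, with $\alpha=\max\{M/\delta_0,1,3c_1\}$ and the constant $C$ appearing in (\ref{prebar}) (or its strong-decay analogue) depending only on $\max_{\overline{\Omega}_{\delta_0}}(|\nabla g|+|\nabla^2 g|+|\nabla^2 d|)$. The finiteness of such a $\beta$ is exactly what hypotheses (\ref{condp}) and (\ref{condm}) are designed to guarantee, via the divergence of the relevant integrals. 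Consequently $|\nabla u(p)|$ admits an a~priori bound depending only on $|g|_{C^2(\overline{\Omega})}$ and on $\Omega$, which is the claim. The main substantive work, namely the barrier construction itself, has already been carried out; this theorem is a clean bookkeeping step, and the only points requiring care are that the barrier is valid up to $d=\delta\leq\delta_0$ (which the estimate $\delta\leq M/\alpha\leq\delta_0$ provides) and that the resulting gradient bound is truly independent of $u$.
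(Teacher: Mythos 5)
Your proposal is correct and follows essentially the same route as the paper: the paper's ``proof'' is the single sentence ``From the previous calculations and also (\ref{c0apri}), we obtain,'' delegating everything to the two barrier constructions in Section \ref{bge}, and your argument is precisely the standard bookkeeping that those constructions are meant to feed. You correctly observe that $f'(0)=\beta$ (since $h(\beta)=0$ in the paper's notation), that $\beta$ is determined by $f(\delta)=M$ through the divergence hypotheses (\ref{condp}) or (\ref{condm}), that $M=2\max_{\overline\Omega}|g|$ is the right preassigned height thanks to (\ref{c0apri}), and that the comparison on the strip $\{0<d<\delta\}$ is legitimate because $u\le g+M\le w$ on $\{d=\delta\}$ and $u=g=w$ on $\partial\Omega$. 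The one-sided touching argument giving $\partial_\nu v(p)\le\partial_\nu u(p)\le\partial_\nu w(p)$ and the remark that tangential derivatives of $u$ and $g$ agree on $\partial\Omega$ are both correct and are exactly the steps the paper omits.
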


\newpage

\subsection{\label{glge}Global and local gradient estimates}

\qquad In this section we prove global and local estimates of solutions of the
partial differential equation (\ref{PDE1}) on bounded domains. We assume that
$u$ is a solution of class $C^{3}$ and use the equivalent form of
(\ref{PDE1}), namely:%
\begin{equation}
|\nabla u|^{2}\Delta u+b\nabla^{2}u\left(  \nabla u,\nabla u\right)  =0
\label{pde3rep}%
\end{equation}
recalling that
\[
b(s)=\frac{sa^{\prime}(s)}{a(s)}-1.
\]
\bigskip

In order to derive gradient bounds for the solutions of (\ref{pde3rep}) we
consider a point of $\Omega,$ say $x_{0},$ where a certain auxiliary function
attains a local maximum. We need slightly different such auxiliary functions,
all of them of the form%
\[
G\left(  x\right)  =g(x)f(u)F(|\nabla u|).
\]
The gradient estimates (local and global) are obtained by writing
\[
\nabla^{2}G(x_{0})\left(  \nabla u,\nabla u\right)
\]
as a polynomial in $\left\vert \nabla u\right\vert $. Analyzing its leading
coefficient, after an appropriate choice of $g,$ $f$ and $F,$ the constraint%
\[
\nabla^{2}G(x_{0})\left(  \nabla u,\nabla u\right)  \leq0
\]
will impose an upper bound for $\left\vert \nabla u\right\vert .$

We shall make use of the well known Bochner formula:

\begin{proposition}
[Bochner formula]\label{boch}If $\mathcal{M}^{n}$ is a Riemannian manifold and
$u\in C^{3}(\mathcal{M})$ then%
\begin{equation}
\left\langle \nabla\Delta u,\nabla u\right\rangle =\frac{1}{2}\Delta\left\vert
\nabla u\right\vert ^{2}-\left\vert \nabla^{2}u\right\vert ^{2}%
-\operatorname*{Ric}\left(  \nabla u,\nabla u\right)  \label{bochf}%
\end{equation}

\end{proposition}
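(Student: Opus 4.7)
The statement is the classical Bochner--Weitzenb\"ock identity, so the plan is to carry out the standard tensorial computation, which is really just a matter of commuting covariant derivatives and absorbing the curvature correction. I will work at an arbitrary but fixed point $p\in\mathcal{M}$ and pick a local orthonormal frame $\{E_{1},\ldots,E_{n}\}$ which is geodesic at $p$, i.e.\ $\nabla_{E_{i}}E_{j}(p)=0$. With this choice all first covariant derivatives of the frame vectors drop out at $p$ and iterated frame derivatives coincide with iterated covariant derivatives. Writing $u_{i}=E_{i}u$, $u_{ij}=\nabla^{2}u(E_{i},E_{j})$, etc., the computation will be reduced to bookkeeping with indices.

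The first step is to compute $\tfrac{1}{2}\Delta|\nabla u|^{2}$ at $p$ directly from the definition. Since $|\nabla u|^{2}=\sum_{k}u_{k}^{2}$ and the frame is geodesic at $p$,
\[
\tfrac{1}{2}\Delta|\nabla u|^{2}(p)=\tfrac{1}{2}\sum_{i}E_{i}E_{i}\Bigl(\sum_{k}u_{k}^{2}\Bigr)(p)=\sum_{i,k}u_{ki}^{2}(p)+\sum_{i,k}u_{k}(p)\,u_{kii}(p).
\]
The first sum is exactly $|\nabla^{2}u|^{2}(p)$, so what remains is to identify $\sum_{i,k}u_{k}u_{kii}$ with $\langle\nabla\Delta u,\nabla u\rangle+\operatorname{Ric}(\nabla u,\nabla u)$ at $p$.

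The second step is the commutation of derivatives. Since $\nabla^{2}u$ is symmetric we have $u_{ki}=u_{ik}$, but the triple index $u_{kii}$ and $u_{iik}=E_{k}\Delta u$ differ by a curvature term. I will apply the Ricci identity to the $1$-form $du$: for any vector fields $X,Y,Z$,
\[
(\nabla^{2}du)(X,Y,Z)-(\nabla^{2}du)(X,Z,Y)=-du(R(Y,Z)X),
\]
so after contracting with $X=Z=E_{i}$, summing in $i$, and using the definition of Ricci curvature $\operatorname{Ric}(Y,W)=\sum_{i}\langle R(E_{i},Y)W,E_{i}\rangle$, one obtains at $p$ the identity $\sum_{i}u_{kii}=\sum_{i}u_{iik}+\operatorname{Ric}(E_{k},\nabla u)$. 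Multiplying by $u_{k}$ and summing in $k$ yields $\sum_{i,k}u_{k}u_{kii}=\langle\nabla\Delta u,\nabla u\rangle+\operatorname{Ric}(\nabla u,\nabla u)$, which combined with the previous display proves the formula at $p$. Since $p$ was arbitrary, the formula holds globally.

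The only delicate point is the sign convention in the Ricci identity, which must be matched with the convention $R(X,Y)=\nabla_{X}\nabla_{Y}-\nabla_{Y}\nabla_{X}-\nabla_{[X,Y]}$ and the trace defining $\operatorname{Ric}$; I expect this to be the one real place where care is needed. Everything else is a routine index manipulation enabled by the geodesic-frame simplification.
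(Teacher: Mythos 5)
Your proposal is correct and follows essentially the same route as the paper's proof: both choose a geodesic orthonormal frame at a point, expand $\Delta|\nabla u|^{2}$ in that frame, and absorb the commutation of third covariant derivatives into a curvature term (you phrase it via the Ricci identity for the $1$-form $du$, the paper via $\nabla_{E_j}\nabla_{E_i}\nabla u = R(E_j,E_i)\nabla u + \nabla_{E_i}\nabla_{E_j}\nabla u$ at $p$, which is the same computation). The only caveat is your remark that ``iterated frame derivatives coincide with iterated covariant derivatives'' at $p$: this is not literally true at second order for an arbitrary geodesic frame, but the contracted quantity $\sum_{i}E_iE_iu_k(p)$ does equal $\sum_i(\nabla^2 du)(E_i,E_i,E_k)(p)$ (e.g.\ by using the parallel-transport frame, or by noting that both sides are frame-independent), so your identity and hence the proof go through.
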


\begin{proof}
Let $p\in M$ and $E_{1},...,E_{n}$ a local orthonormal frame field in a
neighborhood $V$ of $p$ such that
\begin{equation}
\nabla_{E_{i}}E_{j}(p)=0,~i,j=1,...,n. \label{bo1}%
\end{equation}
Hence, we have at $p$%
\begin{align}
\Delta\left\vert \nabla u\right\vert ^{2}  &  =\sum_{j}\nabla^{2}\left\vert
\nabla u\right\vert ^{2}\left(  E_{j},E_{j}\right) \nonumber\\
&  =\sum_{j}\left\langle \nabla_{E_{j}}\nabla\left\vert \nabla u\right\vert
^{2},E_{j}\right\rangle \nonumber\\
&  =\sum_{j}E_{j}\left\langle \nabla|\nabla u|^{2},E_{j}\right\rangle .
\label{bo2}%
\end{align}

Since%
\begin{align}
\left\langle \nabla|\nabla u|^{2},E_{j}\right\rangle  &  =E_{j}|\nabla
u|^{2}=E_{j}\left\langle \nabla u,\nabla u\right\rangle \nonumber\\
&  =2\left\langle \nabla_{E_{j}}\nabla u,\nabla u\right\rangle =2\nabla
^{2}u\left(  E_{j},\nabla u\right) \nonumber\\
&  =2\nabla^{2}u\left(  E_{j},\sum_{i}\left\langle \nabla u,E_{i}\right\rangle
E_{i}\right) \nonumber\\
&  =2\sum_{i}\left\langle \nabla u,E_{i}\right\rangle \nabla^{2}u\left(
E_{j},E_{i}\right)  \label{bot}%
\end{align}
hold at every point of $M$ we obtain%
\begin{align*}
\Delta\left\vert \nabla u\right\vert ^{2}  &  =2\sum_{i,j}E_{j}\left(
\left\langle \nabla u,E_{i}\right\rangle \nabla^{2}u\left(  E_{j}%
,E_{i}\right)  \right) \\
&  =2\sum_{i,j}\left[  E_{j}\left(  \left\langle \nabla u,E_{i}\right\rangle
\right)  \nabla^{2}u\left(  E_{j},E_{i}\right)  +\left\langle \nabla
u,E_{i}\right\rangle E_{j}\left(  \nabla^{2}u\left(  E_{j},E_{i}\right)
\right)  \right]
\end{align*}
and then
\begin{equation}
\Delta\left\vert \nabla u\right\vert ^{2}=2\sum_{i,j}\left[  \nabla
^{2}u\left(  E_{j},E_{i}\right)  ^{2}+\left\langle \nabla u,E_{i}\right\rangle
E_{j}\left(  \nabla^{2}u\left(  E_{j},E_{i}\right)  \right)  \right]  .
\label{bo3}%
\end{equation}

By the symmetry of $\nabla^{2}u$ we have at $p$%
\begin{align}
E_{j}\left(  \nabla^{2}u\left(  E_{j},E_{i}\right)  \right)   &  =E_{j}\left(
\nabla^{2}u\left(  E_{i},E_{j}\right)  \right) \nonumber\\
&  =E_{j}\left\langle \nabla_{E_{i}}\nabla u,E_{j}\right\rangle \nonumber\\
&  =\left\langle \nabla_{E_{j}}\nabla_{E_{i}}\nabla u,E_{j}\right\rangle
\nonumber\\
&  =\left\langle R\left(  E_{j},E_{i}\right)  \nabla u+\nabla_{E_{i}}%
\nabla_{E_{j}}\nabla u,E_{j}\right\rangle \nonumber\\
&  =\left\langle R\left(  E_{j},E_{i}\right)  \nabla u,E_{j}\right\rangle
+E_{i}\left\langle \nabla_{E_{j}}\nabla u,E_{j}\right\rangle \nonumber\\
&  =\left\langle R\left(  E_{j},E_{i}\right)  \nabla u,E_{j}\right\rangle
+E_{i}\left(  \nabla^{2}u\left(  E_{j},E_{j}\right)  \right)  \label{bo4}%
\end{align}
where $R$ denotes the curvature tensor of $\mathcal{M}.$ Inserting (\ref{bo4})
in (\ref{bo3}) we finally arrive at%
\begin{align*}
\frac{\Delta\left\vert \nabla u\right\vert ^{2}}{2}  &  =\sum_{i,j}\left\{
\nabla^{2}u\left(  E_{j},E_{i}\right)  ^{2}+\left\langle \nabla u,E_{i}%
\right\rangle \left[  \left\langle R\left(  E_{j},E_{i}\right)  \nabla
u,E_{j}\right\rangle +E_{i}\left(  \nabla^{2}u\left(  E_{j},E_{j}\right)
\right)  \right]  \right\} \\
&  =\left\vert \nabla^{2}u\right\vert ^{2}+\sum_{j}\left\langle R\left(
E_{j},\nabla u\right)  \nabla u,E_{j}\right\rangle +\sum_{i}\left\langle
\nabla u,E_{i}\right\rangle E_{i}\left(  \Delta u\right) \\
&  =\left\vert \nabla^{2}u\right\vert ^{2}+\operatorname{Ric}(\nabla u,\nabla
u)+\left\langle \nabla\Delta u,\nabla u\right\rangle .
\end{align*}

\end{proof}

We now obtain an equation for $|\nabla u|$ by differentiating (\ref{pde3rep})
in direction $\nabla u.$

\begin{lemma}
\label{pdegrad}If $u$ solves (\ref{pde3rep}) then, in an orthonormal frame
$E_{1},...,E_{n}$ with $E_{1}=\left\vert \nabla u\right\vert ^{-1}\nabla u$ on
a neighborhood of $\Omega$ where $\nabla u$ is non zero$,$ the following
equality holds%
\begin{gather*}
\left(  b+1\right)  \left\vert \nabla u\right\vert \nabla^{2}\left\vert \nabla
u\right\vert \left(  E_{1},E_{1}\right)  +\left\vert \nabla u\right\vert
\sum_{i=2}^{n}\nabla^{2}\left\vert \nabla u\right\vert \left(  E_{i}%
,E_{i}\right) \\
+b^{\prime}\left\vert \nabla u\right\vert \nabla^{2}u\left(  E_{1}%
,E_{1}\right)  ^{2}+b\sum_{i=2}^{n}\nabla^{2}u\left(  E_{1},E_{i}\right)
^{2}\\
-\sum_{i=1,j=2}^{n}\nabla^{2}u\left(  E_{i},E_{j}\right)  -\operatorname*{Ric}%
\left(  \nabla u,\nabla u\right)  =0,
\end{gather*}
where $\operatorname*{Ric}$ denotes the Ricci tensor of $\mathcal{M}.$
\end{lemma}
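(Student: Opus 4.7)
The plan is to compute $\langle \nabla\Delta u,\nabla u\rangle$ in two ways and equate the results. Set $w=|\nabla u|$ and $u_{ij}=\nabla^{2}u(E_{i},E_{j})$. In the specified frame $u_{1}=w$ and $u_{i}=0$ for $i\geq 2$, so (\ref{pde3rep}) reduces, after division by $w^{2}$, to the clean form $\Delta u=-b(w)u_{11}$.

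Differentiating this identity along $\nabla u=wE_{1}$, and using $E_{i}w=u_{i1}$ (which follows from $\langle\nabla w^{2},X\rangle=2\nabla^{2}u(X,\nabla u)$), yields
\[
\langle\nabla\Delta u,\nabla u\rangle=-wb'(w)\,u_{11}^{2}-bw\,E_{1}(u_{11}).
\]
On the other hand, Bochner's formula combined with $\tfrac{1}{2}\Delta w^{2}=w\Delta w+|\nabla w|^{2}$ and $|\nabla w|^{2}=\sum_{i}u_{i1}^{2}$ gives
\[
\langle\nabla\Delta u,\nabla u\rangle=w\Delta w+\sum_{i}u_{i1}^{2}-\sum_{i,j}u_{ij}^{2}-\operatorname{Ric}(\nabla u,\nabla u).
\]
Splitting off the distinguished index $1$, the combination $\sum_{i}u_{i1}^{2}-\sum_{i,j}u_{ij}^{2}$ collapses to $-\sum_{i\geq 2}u_{1i}^{2}-\sum_{i,j\geq 2}u_{ij}^{2}$ (accounting for the $-\sum_{i=1,j=2}^{n}u_{ij}^{2}$ piece of the stated identity), and the split $w\Delta w=w\nabla^{2}w(E_{1},E_{1})+w\sum_{i\geq 2}\nabla^{2}w(E_{i},E_{i})$ explains two of the Hessian terms.

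The key technical step is to convert $bw\,E_{1}(u_{11})$ into $bw\,\nabla^{2}w(E_{1},E_{1})$ plus a controlled correction. Since $E_{1}w=u_{11}$, we have $E_{1}E_{1}w=\nabla^{2}w(E_{1},E_{1})+(\nabla_{E_{1}}E_{1})w$. To compute $\nabla_{E_{1}}E_{1}$, I would start from $E_{1}=w^{-1}\nabla u$ and use $\nabla_{\nabla u}\nabla u=w\sum_{i}u_{1i}E_{i}$; after cancelling the $E_{1}$-component one obtains $\nabla_{E_{1}}E_{1}=w^{-1}\sum_{i\geq 2}u_{1i}E_{i}$, which is perpendicular to $E_{1}$ and satisfies $(\nabla_{E_{1}}E_{1})w=w^{-1}\sum_{i\geq 2}u_{1i}^{2}$. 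Hence $bw\,E_{1}(u_{11})=bw\,\nabla^{2}w(E_{1},E_{1})+b\sum_{i\geq 2}u_{1i}^{2}$, and substituting into the equality of the two expressions above gives the identity claimed.

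The one place that deserves genuine attention is the computation of $\nabla_{E_{1}}E_{1}$: unlike in the proof of Bochner, where the frame can be adapted at an arbitrary point so that all $\nabla_{E_{i}}E_{j}(p)=0$, here $E_{1}$ is forced on a whole neighborhood, so Christoffel-type corrections cannot be killed and must instead be tracked. It is the orthogonality of $\nabla_{E_{1}}E_{1}$ to $E_{1}$ that conspires with the $-\sum_{i\geq 2}u_{1i}^{2}$ coming from Bochner and the $+b\sum_{i\geq 2}u_{1i}^{2}$ coming from the PDE-differentiation to produce both the $(b+1)$ coefficient of $\nabla^{2}w(E_{1},E_{1})$ and the $b$ coefficient of $\sum_{i\geq 2}u_{1i}^{2}$ in the stated identity.
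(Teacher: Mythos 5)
Your proof is correct, and while it shares the overall strategy with the paper's (differentiate the PDE along $\nabla u$, invoke Bochner, reorganize in the adapted frame), the technical execution differs in an interesting way. The paper keeps the calculation in fully invariant form: it differentiates the PDE as written, replaces $\Delta u$ by $-|\nabla u|^{-2}b\,\nabla^{2}u(\nabla u,\nabla u)$, expands $\nabla u\langle\nabla|\nabla u|^{2},\nabla u\rangle$ by the Leibniz rule for the parallel metric (so $\nabla_{E_{1}}E_{1}$ never appears explicitly), and only at the very end converts $\nabla^{2}|\nabla u|^{2}$ into $\nabla^{2}|\nabla u|$ via the algebraic identity $\tfrac{1}{2}\nabla^{2}|\nabla u|^{2}(E_{i},E_{i})=|\nabla u|\nabla^{2}|\nabla u|(E_{i},E_{i})+\nabla^{2}u(E_{1},E_{i})^{2}$. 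You instead reduce the PDE to $\Delta u=-b\,u_{11}$ in the frame first, differentiate that scalar identity along $E_{1}$, and then pay for the non-parallel frame by explicitly computing $\nabla_{E_{1}}E_{1}=w^{-1}\sum_{i\geq 2}u_{1i}E_{i}$ so as to pass from $E_{1}(E_{1}w)$ to $\nabla^{2}w(E_{1},E_{1})$. Your route makes the source of the $(b+1)$ and $b$ coefficients very concrete (the orthogonality of $\nabla_{E_{1}}E_{1}$ to $E_{1}$ is exactly what redistributes the $\sum_{i\geq 2}u_{1i}^{2}$ terms), at the cost of tracking a Christoffel-type correction; the paper's route avoids $\nabla_{E_{1}}E_{1}$ entirely by staying with parallel-metric inner products, at the cost of carrying $|\nabla u|^{2}$ one step longer. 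Both are sound; you correctly identified the subtle point that the frame is adapted on a whole neighborhood, so the connection coefficient cannot be normalized away at a point as in the proof of Bochner's formula.
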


\begin{proof}
Differentiating (\ref{pde3rep}) in direction $\nabla u$ gives%
\begin{gather*}
\left\vert \nabla u\right\vert ^{2}\left\langle \nabla\Delta u,\nabla
u\right\rangle +\left\langle \nabla\left\vert \nabla u\right\vert ^{2},\nabla
u\right\rangle \Delta u\\
+b^{\prime}\frac{1}{2}\left\vert \nabla u\right\vert ^{-1}\left\langle
\nabla\left\vert \nabla u\right\vert ^{2},\nabla u\right\rangle \nabla
^{2}u\left(  \nabla u,\nabla u\right) \\
+b\nabla u\left(  \nabla^{2}u(\nabla u,\nabla u)\right)  =0.
\end{gather*}
From the relation
\[
\nabla^{2}u\left(  \nabla u,\nabla u\right)  =\frac{1}{2}\left\langle
\nabla\left\vert \nabla u\right\vert ^{2},\nabla u\right\rangle
\]
and Bochner formula (\ref{bochf}) we obtain%
\begin{gather*}
\frac{1}{2}\Delta\left\vert \nabla u\right\vert ^{2}-\left\vert \nabla
^{2}u\right\vert ^{2}-\operatorname*{Ric}\left(  \nabla u,\nabla u\right) \\
+\left(  b^{\prime}\left\vert \nabla u\right\vert -2b\right)  \left\vert
\nabla u\right\vert ^{-4}\nabla^{2}u\left(  \nabla u,\nabla u\right)  ^{2}\\
+\frac{1}{2}b\left\vert \nabla u\right\vert ^{-2}\nabla u\left\langle
\nabla\left\vert \nabla u\right\vert ^{2},\nabla u\right\rangle =0.
\end{gather*}

For the last term we have%
\begin{align*}
\nabla u\left\langle \nabla\left\vert \nabla u\right\vert ^{2},\nabla
u\right\rangle  &  =\left\vert \nabla u\right\vert \left[  \left\langle
\nabla_{E_{1}}\nabla\left\vert \nabla u\right\vert ^{2},\nabla u\right\rangle
+\left\langle \nabla\left\vert \nabla u\right\vert ^{2},\nabla_{E_{1}}\nabla
u\right\rangle \right] \\
&  =\left\vert \nabla u\right\vert \left[  \nabla^{2}\left\vert \nabla
u\right\vert ^{2}\left(  E_{1},\nabla u\right)  +\sum_{i=1}^{n}\left\langle
E_{i}\left(  \left\vert \nabla u\right\vert ^{2}\right)  E_{i},\nabla_{E_{1}%
}\nabla u\right\rangle \right] \\
&  =\left\vert \nabla u\right\vert ^{2}\left[  \nabla^{2}\left\vert \nabla
u\right\vert ^{2}\left(  E_{1},E_{1}\right)  +2\sum_{i=1}^{n}\nabla
^{2}u\left(  E_{1},E_{i}\right)  ^{2}\right]  .
\end{align*}
This leads to%
\begin{gather*}
\frac{1}{2}\left(  b+1\right)  \nabla^{2}\left\vert \nabla u\right\vert
^{2}\left(  E_{1},E_{1}\right)  +\frac{1}{2}\sum_{i=2}^{n}\nabla^{2}\left\vert
\nabla u\right\vert ^{2}\left(  E_{i},E_{i}\right) \\
+\left(  b^{\prime}\left\vert \nabla u\right\vert -b\right)  \nabla
^{2}u\left(  E_{1},E_{1}\right)  ^{2}+b\sum_{i=2}^{n}\nabla^{2}u\left(
E_{1},E_{i}\right)  ^{2}\\
-\sum_{i,j=1}^{n}\nabla^{2}u\left(  E_{i},E_{j}\right)  ^{2}%
-\operatorname*{Ric}\left(  \nabla u,\nabla u\right)  =0.
\end{gather*}

Using finally the relation%
\[
\frac{1}{2}\nabla^{2}\left\vert \nabla u\right\vert ^{2}\left(  E_{i}%
,E_{i}\right)  =\left\vert \nabla u\right\vert \nabla^{2}\left\vert \nabla
u\right\vert \left(  E_{i},E_{i}\right)  +\nabla^{2}u\left(  E_{1}%
,E_{i}\right)  ^{2},\text{ }1\leq i\leq n,
\]
to convert the last equation into one for $\left\vert \nabla u\right\vert $
instead of $\left\vert \nabla u\right\vert ^{2}$ we arrive at the equation in
the lemma.
\end{proof}

We resume some computations used in the estimates in the following lemma:

\begin{lemma}
\label{lem2} If $u$ solves (\ref{pde3rep}) and the function
$G(x)=g(x)f(u)F(|\nabla u|)$ attains a local maximum in an interior point
$y_{0}$ of $\Omega$ with $\nabla u(y_{0})\neq0$ then, in terms of a local
orthonormal basis $E_{1}:=|\nabla u|^{-1}\nabla u,E_{2},\ldots,E_{n}$ of
$T_{y_{0}}\mathcal{M}$ we obtain, at $y_{0},$ the relations
\[
\frac{F^{\prime}}{F}\nabla^{2}u(E_{1},E_{i})=-\frac{1}{g}\langle\nabla
g,E_{i}\rangle-\frac{f^{\prime}}{f}\langle\nabla u,E_{i}\rangle
\]
and
\begin{align*}
0  &  \geq\left[  -\frac{F^{\prime}b^{\prime}}{F}+(b+1)(\frac{F^{\prime\prime
}}{F}-\frac{F^{\prime2}}{F^{2}})\right]  \nabla^{2}u(E_{1},E_{1})^{2}%
+\frac{F^{\prime}}{F|\nabla u|}\sum_{\substack{i,j\\i\geq2}}\nabla^{2}%
u(E_{i},E_{j})^{2}\\
&  +\left[  -\frac{F^{\prime}b}{F|\nabla u|}+\frac{F^{\prime\prime}}{F}%
-\frac{F^{\prime2}}{F^{2}}\right]  \sum_{i\geq2}\nabla^{2}u(E_{1},E_{i}%
)^{2}+(b+1)(\frac{f^{\prime\prime}}{f}-\frac{f^{\prime2}}{f^{2}})|\nabla
u|^{2}\\
&  +\frac{|\nabla u|F^{\prime}}{F}\operatorname*{Ric}(E_{1},E_{1})+\frac{1}%
{g}\left[  (b+1)\nabla^{2}g(E_{1},E_{1})+\sum_{i\geq2}\nabla^{2}g(E_{i}%
,E_{i})\right] \\
&  -\frac{1}{g^{2}}\left[  (b+1)\langle\nabla g,E_{1}\rangle^{2}+\sum_{i\geq
2}\langle\nabla g,E_{i}\rangle^{2}\right]  .
\end{align*}

\end{lemma}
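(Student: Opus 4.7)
The plan is to exploit the two maximum conditions at $y_0$, namely $\nabla G(y_0)=0$ and $\nabla^{2}G(y_0)\leq 0$, after taking a logarithm to split $G$ additively. Since $g,f,F>0$ in a neighborhood of $y_0$, I may work with $\log G=\log g+\log f(u)+\log F(|\nabla u|)$.

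For the first identity I would compute
\[
\nabla\log G=\frac{\nabla g}{g}+\frac{f'(u)}{f(u)}\nabla u+\frac{F'(|\nabla u|)}{F(|\nabla u|)}\nabla|\nabla u|=0
\]
at $y_{0}$, pair it with $E_{i}$, and use the identity
\[
\langle\nabla|\nabla u|,E_{i}\rangle=\frac{1}{|\nabla u|}\nabla^{2}u(\nabla u,E_{i})=\nabla^{2}u(E_{1},E_{i}),
\]
which was proved in line (\ref{bot}) of the Bochner calculation. This yields the first displayed equation immediately.

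For the inequality I would test $\nabla^{2}\log G\leq 0$ against the positive linear combination with weights $(1+b,1,\dots,1)$ on $(E_{1},E_{2},\dots,E_{n})$, that is I would write
\[
(1+b)\nabla^{2}\log G(E_{1},E_{1})+\sum_{i\geq 2}\nabla^{2}\log G(E_{i},E_{i})\leq 0.
\]
Then I would expand each of the three pieces of $\log G$ via the general rule $\nabla^{2}\log h=h^{-1}\nabla^{2}h-h^{-2}\nabla h\otimes\nabla h$. The $\log g$ piece gives exactly the last two brackets of the claimed inequality. For $\log f(u)$ I obtain
\[
\tfrac{f''f-f'^{2}}{f^{2}}\langle\nabla u,E_{i}\rangle^{2}+\tfrac{f'}{f}\nabla^{2}u(E_{i},E_{i}),
\]
and after weighting, the second term becomes $\tfrac{f'}{f}\bigl[(1+b)\nabla^{2}u(E_{1},E_{1})+\sum_{i\geq 2}\nabla^{2}u(E_{i},E_{i})\bigr]$, which vanishes by the PDE (\ref{pde3rep}); only the quadratic $f$-factor survives as $(1+b)(\tfrac{f''}{f}-\tfrac{f'^{2}}{f^{2}})|\nabla u|^{2}$. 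For $\log F(|\nabla u|)$ I get, using $\langle\nabla|\nabla u|,E_{i}\rangle=\nabla^{2}u(E_{1},E_{i})$ again,
\[
\bigl(\tfrac{F''}{F}-\tfrac{F'^{2}}{F^{2}}\bigr)\nabla^{2}u(E_{1},E_{i})^{2}+\tfrac{F'}{F}\nabla^{2}|\nabla u|(E_{i},E_{i}).
\]
The weighted sum of the first part yields the explicit quadratic-in-$\nabla^2 u$ coefficients in the statement, and the weighted sum of the second part is $\tfrac{F'}{F}\bigl[(1+b)\nabla^{2}|\nabla u|(E_{1},E_{1})+\sum_{i\geq 2}\nabla^{2}|\nabla u|(E_{i},E_{i})\bigr]$, which is precisely the combination controlled by Lemma \ref{pdegrad}. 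Substituting Lemma \ref{pdegrad} produces the remaining terms $-\tfrac{F'b'}{F}\nabla^{2}u(E_{1},E_{1})^{2}$, $-\tfrac{F'b}{F|\nabla u|}\sum_{i\geq 2}\nabla^{2}u(E_{1},E_{i})^{2}$, $\tfrac{F'}{F|\nabla u|}\sum\nabla^{2}u(E_{i},E_{j})^{2}$ and $\tfrac{|\nabla u|F'}{F}\operatorname{Ric}(E_{1},E_{1})$.

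The main obstacle is not conceptual but organizational: one has to keep track of which $\nabla^{2}u(E_{i},E_{j})^{2}$ terms survive after combining the $\log F$ expansion with Lemma \ref{pdegrad}, and to verify that the PDE cancellation in the $\log f$ piece really does kill all first-derivative-of-$u$ terms. A clean book-keeping, done once in the local frame with $E_{1}=\nabla u/|\nabla u|$ so that $\langle\nabla u,E_{i}\rangle=|\nabla u|\delta_{1i}$, makes each coefficient in the asserted inequality readable off at the end.
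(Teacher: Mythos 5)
Your proposal takes essentially the same route as the paper: pass to $\ln G$, use $\nabla\ln G(y_0)=0$ together with $\langle\nabla|\nabla u|,E_i\rangle=\nabla^{2}u(E_1,E_i)$ to get the first identity, then contract $\nabla^{2}\ln G(y_0)\leq 0$ against the ellipticity weights $(1+b,1,\dots,1)$, kill the $\tfrac{f'}{f}\bigl[b\nabla^{2}u(E_1,E_1)+\Delta u\bigr]$ term via the PDE, and dispose of the $\nabla^{2}|\nabla u|$ terms by Lemma \ref{pdegrad}. This is exactly the paper's derivation, so the proposal is correct and not a genuinely different argument.
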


\begin{proof}
We compute
\begin{equation}
\nabla\ln G=\frac{1}{g}\nabla g+\frac{f^{\prime}}{f}\nabla u+\frac{F^{\prime}%
}{F}\nabla|\nabla u| \label{grG}%
\end{equation}
and, for $\xi,\eta\in T_{y_{0}}M,$%
\begin{align*}
\nabla^{2}\ln G(\xi,\eta)  &  =\langle\nabla_{\xi}\nabla\ln G,\eta
\rangle=\langle\nabla_{\xi}\left(  \frac{1}{g}\nabla g+\frac{f^{\prime}}%
{f}\nabla u+\frac{F^{\prime}}{F}\nabla|\nabla u|\right)  ,\eta\rangle\\
&  =\langle-\frac{1}{g^{2}}\xi(g)\nabla g+\frac{1}{g}\nabla_{\xi}\nabla
g+[\frac{f^{\prime\prime}}{f}\xi(u)-\frac{f^{\prime2}}{f^{2}}\xi(u)]\nabla
u+\frac{f^{\prime}}{f}\nabla_{\xi}\nabla u\\
&  +[\frac{F^{\prime\prime}}{F}\xi(|\nabla u|)-\frac{F^{\prime2}}{F^{2}}%
\xi(|\nabla u|)]\nabla|\nabla u|+\frac{F^{\prime}}{F}\nabla_{\xi}\nabla|\nabla
u|,\eta\rangle\\
&  =-\frac{1}{g^{2}}\langle\nabla g,\xi\rangle\langle\nabla g,\eta
\rangle+\frac{1}{g}\nabla^{2}g(\xi,\eta)+[\frac{f^{\prime\prime}}{f}%
-\frac{f^{\prime2}}{f^{2}}]\langle\nabla u,\xi\rangle\langle\nabla
u,\eta\rangle\\
&  +\frac{f^{\prime}}{f}\nabla^{2}u(\xi,\eta)+[\frac{F^{\prime\prime}}%
{F}-\frac{F^{\prime2}}{F^{2}}]\frac{1}{|\nabla u|^{2}}\nabla^{2}u(\nabla
u,\xi)\nabla^{2}u(\nabla u,\eta)\\
&  +\frac{F^{\prime}}{F}\nabla^{2}|\nabla u|(\xi,\eta)
\end{align*}
where we used the relation
\begin{align}
\nabla^{2}u(\nabla u,\eta)  &  =\langle\nabla_{\eta}\nabla u,\nabla
u\rangle=\frac{1}{2}\eta(|\nabla u|^{2})\label{re1}\\
&  =\frac{1}{2}\langle\nabla|\nabla u|^{2},\eta\rangle=|\nabla u|\langle
\nabla|\nabla u|,\eta\rangle.\nonumber
\end{align}

By (\ref{grG}) and (\ref{re1}) we have at $y_{0}$
\begin{align*}
\frac{F^{\prime}}{F}\nabla^{2}u(E_{1},E_{i})  &  =\frac{F^{\prime}}{F|\nabla
u|}\nabla^{2}u(\nabla u,E_{i})\\
&  =\frac{F^{\prime}}{F}\langle\nabla|\nabla u|,E_{i}\rangle=-\frac{1}%
{g}\langle\nabla g,E_{i}\rangle-\frac{f^{\prime}}{f}\langle\nabla
u,E_{i}\rangle
\end{align*}
Since (\ref{pde3rep}) is elliptic and the matrix $(\nabla^{2}\ln G_{y_{0}%
}(E_{i},E_{j}))$ is nonpositive, we obtain at $y_{0}$
\begin{align*}
0\geq\theta &  =(b+1)\nabla^{2}\ln G(E_{1},E_{1})+\sum_{i\geq2}\nabla^{2}\ln
G(E_{i},E_{i})\\
&  =-\frac{b+1}{g^{2}}\langle\nabla g,E_{1}\rangle^{2}+\frac{b+1}{g}\nabla
^{2}g(E_{1},E_{1})+(b+1)(\frac{f^{\prime\prime}}{f}-\frac{f^{\prime2}}{f^{2}%
})\langle\nabla u,E_{1}\rangle^{2}\\
&  +(b+1)\frac{f^{\prime}}{f}\nabla^{2}u(E_{1},E_{1})+(b+1)(\frac
{F^{\prime\prime}}{F}-\frac{F^{\prime2}}{F^{2}})\frac{1}{|\nabla u|^{2}}%
\nabla^{2}u(\nabla u,E_{1})^{2}\\
&  +(b+1)\frac{F^{\prime}}{F}\nabla^{2}|\nabla u|(E_{1},E_{1})-\frac{1}{g^{2}%
}\sum_{i\geq2}\left\langle \nabla g,E_{i}\right\rangle ^{2}+\frac{1}{g}%
\sum_{i\geq2}\nabla^{2}g(E_{i},E_{i})\\
&  +(\frac{f^{\prime\prime}}{f}-\frac{f^{\prime2}}{f^{2}})\sum_{i\geq2}%
\langle\nabla u,E_{i}\rangle^{2}+\frac{f^{\prime}}{f}\sum_{i\geq2}\nabla
^{2}u(E_{i},E_{i})\\
&  +(\frac{F^{\prime\prime}}{F}-\frac{F^{\prime2}}{F^{2}})\frac{1}{|\nabla
u|^{2}}\sum_{i\geq2}\nabla^{2}u(\nabla u,E_{i})^{2}+\frac{F^{\prime}}{F}%
\sum_{i\geq2}\nabla^{2}|\nabla u|(E_{i},E_{i})
\end{align*}
Hence, by (\ref{pde3rep}),
\begin{align}
\theta &  =\frac{F^{\prime}}{F|\nabla u|}\left\{  |\nabla u|(b+1)\nabla
^{2}|\nabla u|(E_{1},E_{1})+|\nabla u|\sum_{i\geq2}\nabla^{2}|\nabla u|\left(
E_{i},E_{i}\right)  \right\} \nonumber\\
&  +(b+1)(\frac{F^{\prime\prime}}{F}-\frac{F^{\prime2}}{F^{2}})\nabla
^{2}u(E_{1},E_{1})^{2}+(\frac{F^{\prime\prime}}{F}-\frac{F^{\prime2}}{F^{2}%
})\sum_{i\geq2}\nabla^{2}u(E_{1},E_{i})^{2}\nonumber\\
&  +(b+1)(\frac{f^{\prime\prime}}{f}-\frac{f^{\prime2}}{f^{2}})|\nabla
u|^{2}+\frac{1}{g}\left[  (b+1)\nabla^{2}g(E_{1},E_{1})+\sum_{i\geq2}%
\nabla^{2}g(E_{i},E_{i})\right] \nonumber\\
&  -\frac{1}{g^{2}}\left[  (b+1)\langle\nabla g,E_{1}\rangle^{2}+\sum_{i\geq
2}\langle\nabla g,E_{i}\rangle^{2}\right]  +\frac{f^{\prime}}{f}%
\underset{=\text{ }0}{\underbrace{\left[  b\nabla^{2}u(E_{1},E_{1})+\Delta
u\right]  }} \label{tk1}%
\end{align}
By Lemma \ref{pdegrad} and (\ref{tk1}), we obtain
\begin{align*}
\theta &  =\frac{F^{\prime}}{F|\nabla u|}\left\{  -b^{\prime}\left\vert \nabla
u\right\vert (E_{1},E_{1})^{2}-b\sum_{i\geq2}\nabla^{2}u(E_{1},E_{i})^{2}%
+\sum_{\substack{i,j\\i\geq2}}\nabla^{2}u(E_{i},E_{j})^{2}%
+\text{$\operatorname*{Ric}$}(\nabla u,\nabla u)\right\} \\
&  \;+(b+1)(\frac{F^{\prime\prime}}{F}-\frac{F^{\prime2}}{F^{2}})\nabla
^{2}u(E_{1},E_{1})^{2}+(\frac{F^{\prime\prime}}{F}-\frac{F^{\prime2}}{F^{2}%
})\sum_{i\geq2}\nabla^{2}u(E_{1},E_{i})\\
&  +(b+1)(\frac{f^{\prime\prime}}{f}-\frac{f^{\prime2}}{f^{2}})|\nabla
u|^{2}+\frac{1}{g}\left[  (b+1)\nabla^{2}g(E_{1},E_{1})+\sum_{i\geq2}%
\nabla^{2}g(E_{i},E_{i})\right] \\
&  -\frac{1}{g^{2}}\left[  (b+1)\langle\nabla g,E_{1}\rangle^{2}+\sum_{i\geq
2}\langle\nabla g,E_{i}\rangle^{2}\right]
\end{align*}
and so
\begin{align*}
\theta &  =\left[  -\frac{F^{\prime}b^{\prime}}{F}+(b+1)(\frac{F^{\prime
\prime}}{F}-\frac{F^{\prime2}}{F^{2}})\right]  \nabla^{2}u(E_{1},E_{1}%
)^{2}+\frac{F^{\prime}}{F|\nabla u|}\sum_{\substack{i,j\\i\geq2}}\nabla
^{2}u(E_{i},E_{j})^{2}\\
&  +\left[  -\frac{F^{\prime}b}{F|\nabla u|}+\frac{F^{\prime\prime}}{F}%
-\frac{F^{\prime2}}{F^{2}}\right]  \sum_{i\geq2}\nabla^{2}u(E_{1},E_{i}%
)^{2}+(b+1)(\frac{f^{\prime\prime}}{f}-\frac{f^{\prime2}}{f^{2}})|\nabla
u|^{2}\\
&  +\frac{|\nabla u|F^{\prime}}{F}\text{$\operatorname*{Ric}$}(E_{1}%
,E_{1})+\frac{1}{g}\left[  (b+1)\nabla^{2}g(E_{1},E_{1})+\sum_{i\geq2}%
\nabla^{2}g(E_{i},E_{i})\right] \\
&  -\frac{1}{g^{2}}\left[  (b+1)\langle\nabla g,E_{1}\rangle^{2}+\sum_{i\geq
2}\langle\nabla g,E_{i}\rangle^{2}\right]  .
\end{align*}

\end{proof}

\subsubsection{The class of mild decay of the eigenvalue ratio}

\qquad Taking the function $F(s)=s$ in Lemma \ref{lem2}, we obtain
\begin{equation}
\frac{\nabla^{2}u(E_{1},E_{1})^{2}}{|\nabla u|^{2}}=\frac{f^{\prime2}}{f^{2}%
}|\nabla u|^{2}+\frac{1}{g^{2}}\langle\nabla g,E_{1}\rangle^{2}+\frac
{2f^{\prime}}{fg}\langle\nabla g,E_{1}\rangle|\nabla u|, \label{tk3}%
\end{equation}%
\begin{equation}
\frac{1}{|\nabla u|^{2}}\nabla^{2}u(E_{1},E_{i})^{2}=\frac{1}{g^{2}}%
\langle\nabla g,E_{i}\rangle^{2},\;\;\forall i=2,\ldots,n. \label{tk4}%
\end{equation}
and
\begin{align}
0  &  \geq-\frac{(b^{\prime}|\nabla u|+b+1)}{|\nabla u|^{2}}\nabla^{2}%
u(E_{1},E_{1})^{2}+\frac{1}{|\nabla u|^{2}}\sum_{\substack{i,j\\i\geq2}%
}\nabla^{2}u(E_{i},E_{j})^{2}\nonumber\\
&  -\frac{b+1}{|\nabla u|^{2}}\sum_{i\geq2}\nabla^{2}u(E_{1},E_{i}%
)^{2}+(b+1)(\frac{f^{\prime\prime}}{f}-\frac{f^{\prime2}}{f^{2}})|\nabla
u|^{2}\nonumber\\
&  +\text{$\operatorname*{Ric}$}(E_{1},E_{1})+\frac{1}{g}\left[
(b+1)\nabla^{2}g(E_{1},E_{1})+\sum_{i\geq2}\nabla^{2}g(E_{i},E_{i})\right]
\nonumber\\
&  -\frac{1}{g^{2}}\left[  (b+1)\langle\nabla g,E_{1}\rangle^{2}+\sum_{i\geq
2}\langle\nabla g,E_{i}\rangle^{2}\right]  \label{tf1}%
\end{align}
Inserting (\ref{tk3}) and (\ref{tk4}) in (\ref{tf1}), we arrive at%
\begin{align}
0  &  \geq-(b^{\prime}|\nabla u|+b+1)\left[  \frac{f^{\prime2}}{f^{2}}|\nabla
u|^{2}+\frac{1}{g^{2}}\langle\nabla g,E_{1}\rangle^{2}+\frac{2f^{\prime}}%
{fg}\langle\nabla g,E_{1}\rangle|\nabla u|\right] \nonumber\\
&  -\left(  b+1\right)  \sum_{i\geq2}\frac{1}{g^{2}}\langle\nabla
g,E_{i}\rangle^{2}+(b+1)(\frac{f^{\prime\prime}}{f}-\frac{f^{\prime2}}{f^{2}%
})|\nabla u|^{2}\nonumber\\
&  +\text{$\operatorname*{Ric}$}(E_{1},E_{1})+\frac{1}{g}\left[
(b+1)\nabla^{2}g(E_{1},E_{1})+\sum_{i\geq2}\nabla^{2}g(E_{i},E_{i})\right]
\nonumber\\
&  -\frac{1}{g^{2}}\left[  (b+1)\langle\nabla g,E_{1}\rangle^{2}+\sum_{i\geq
2}\langle\nabla g,E_{i}\rangle^{2}\right] \nonumber\\
&  =-b^{\prime}|\nabla u|\frac{f^{\prime2}}{f^{2}}|\nabla u|^{2}+(b+1)\left(
\frac{f^{\prime\prime}}{f}-2\frac{f^{\prime2}}{f^{2}}\right)  |\nabla
u|^{2}+\text{$\operatorname*{Ric}$}(E_{1},E_{1})\nonumber\\
&  -(b^{\prime}|\nabla u|+b+1)\left[  \frac{\langle\nabla g,E_{1}\rangle^{2}%
}{g^{2}}+\frac{2f^{\prime}}{fg}\langle\nabla g,E_{1}\rangle|\nabla u|\right]
-\frac{b+1}{g^{2}}\sum_{i\geq2}\langle\nabla g,E_{i}\rangle^{2}\nonumber\\
&  +\frac{1}{g}\left[  (b+1)\nabla^{2}g(E_{1},E_{1})+\sum_{i\geq2}\nabla
^{2}g(E_{i},E_{i})\right] \nonumber\\
&  -\frac{1}{g^{2}}\left[  (b+1)\langle\nabla g,E_{1}\rangle^{2}+\sum_{i\geq
2}\langle\nabla g,E_{i}\rangle^{2}\right]  \label{eq5}%
\end{align}
We first consider the global estimate where we set $g\equiv1$ and choose
$f(u)=(\ln(K+u))^{-1}$ with a constant $K>0$. For convenience we also assume
that $u\geq0$. For this $f$ we have
\[
f^{\prime}=-(K+u)^{-1}(\ln(K+u))^{-2},\;f^{\prime\prime}=(K+u)^{-2}%
(\ln(K+u))^{-3}\left(  \ln(K+u)+2\right)
\]
and thus
\[
\frac{f^{\prime}}{f}=-\frac{1}{(K+u)\ln(K+u)},\;\;\frac{f^{\prime\prime}}%
{f}=\frac{1}{(K+u)^{2}\ln(K+u)}+\frac{2f^{\prime2}}{f^{2}}.
\]
Then (\ref{eq5}) becomes
\[
0\geq-b^{\prime}|\nabla u|\frac{f^{\prime2}}{f^{2}}|\nabla u|^{2}+(b+1)\left[
\frac{f^{\prime\prime}}{f}-2\frac{f^{\prime2}}{f^{2}}\right]  |\nabla
u|^{2}+\text{$\operatorname*{Ric}$}(E_{1},E_{1})
\]
and so
\begin{equation}
(K+u)^{-2}(\ln(K+u))^{-1}\left[  b+1-\frac{b^{\prime+}\left(  |\nabla
u|\right)  }{\ln(K+u)}\right]  |\nabla u|^{2}\leq|\operatorname*{Ric}%
\nolimits^{-}|, \label{qc}%
\end{equation}
where $b^{\prime+}=\max\{b^{\prime},0\}$ and
\[
\operatorname*{Ric}\nolimits^{-}:=\min_{\left\vert \eta\right\vert =1}%
\min\left\{  \operatorname*{Ric}\left(  \eta,\eta\right)  ,0\right\}  .
\]

We now require the condition:

\begin{condition}
\label{cond6} There are numbers $s_{0},$ $\beta>0$ and a function $\varphi\in
C^{0}\left(  \left[  0,\infty\right)  \right)  $ with $\lim_{s\rightarrow
+\infty}\varphi(s)=+\infty$ such that
\[
(b(s)+1-\beta b^{\prime+}(s)s)s^{2}\geq\varphi(s)
\]
for $s\geq s_{0}.$
\end{condition}

Then, since (\ref{qc}) holds at a point where the function $G=|\nabla
u|/\ln(K+u)$ attains a maximum, choosing $K=\exp(1/\beta),$ we obtain

\begin{theorem}
\label{globplapl}Under Condition \ref{cond6} there is a constant $C$ depending
only on $\varphi$, $\beta$ and $\sup_{\Omega}\left(  \left\vert u\right\vert
+\operatorname*{Ric}\nolimits^{-}\right)  $ such\textit{ }that if the function
$|\nabla u|/\ln(K+u)$ attains a local maximum at an interior point $y_{0}$ of
$\Omega$, then
\[
\left\vert \nabla u(y_{0})\right\vert \leq C.
\]

\end{theorem}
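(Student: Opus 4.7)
The plan is to extract the bound directly from the pointwise inequality (\ref{qc}), which, by its derivation, is automatically available at the interior maximum $y_0$ of $G(x) = |\nabla u(x)|/\ln(K+u(x))$. All the differential-geometric machinery (Bochner's formula, the equation for $|\nabla u|$ in Lemma \ref{pdegrad}, and the max-point inequality of Lemma \ref{lem2}) has already been assembled; only the final algebraic matching with Condition \ref{cond6} remains.

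Writing $s := |\nabla u(y_0)|$, the choice $K = \exp(1/\beta)$ together with the normalization $u \geq 0$ gives $\ln(K+u(y_0)) \geq \ln K = 1/\beta$, hence $1/\ln(K+u(y_0)) \leq \beta$. Substituting this upper bound into the bracket appearing in (\ref{qc}) replaces it by the expression $b(s)+1 - \beta\, b^{\prime+}(s)\, s$, which Condition \ref{cond6} bounds from below by $\varphi(s)/s^2$ for $s \geq s_0$. Multiplying (\ref{qc}) through by the positive factor $(K+u(y_0))^2 \ln(K+u(y_0))$ and combining with Condition \ref{cond6}, one arrives at
\[
\varphi(s) \;\leq\; (K+u(y_0))^2\, \ln(K+u(y_0))\, |\operatorname*{Ric}\nolimits^-(y_0)|
\]
whenever $s \geq s_0$.

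The right-hand side is bounded by a constant depending only on $\beta$ and $\sup_\Omega(|u| + \operatorname*{Ric}\nolimits^-)$. Since $\varphi(s) \to \infty$ as $s \to \infty$, this inequality forces $s \leq C$ for a constant $C$ of the advertised form; in the remaining case $s < s_0$ the bound is automatic. The only delicate point of the argument is the calibration of $K$: it must be large enough that $1/\ln(K+u) \leq \beta$ holds uniformly on $\Omega$, yet no larger than necessary so that the right-hand side above remains under control. The value $K=\exp(1/\beta)$ is precisely the minimal choice that aligns the bracket in (\ref{qc}) with the quantity appearing in Condition \ref{cond6}, thereby allowing the $b^{\prime+}$ term to be absorbed.
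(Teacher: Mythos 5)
Your argument is correct and is precisely the (very terse) argument the paper intends: the paper's proof simply invokes (\ref{qc}) at the max point and notes the choice $K=\exp(1/\beta)$, leaving the remaining algebra to the reader, which you have carried out accurately (including the implicit factor $|\nabla u|$ multiplying $b^{\prime+}$ in (\ref{qc}), needed to match Condition \ref{cond6}). No substantive difference in approach.
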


We now turn our attention to the local estimates. Here we consider the
solution $u$ in a closed geodesic ball $B_{r}(x_{0})$ with center $x_{0}$ and
radius $r$ smaller than the distance of $x_{0}$ to its cut locus, if the
latter is nonempty, and we choose
\[
g(x)=1-\frac{\rho^{2}}{r^{2}},\;\rho(x)=\operatorname*{dist}(x,x_{0}).
\]
Unless $\nabla u\equiv0$ in $B_{r}(x_{0})$, what would make any further
estimate superfluous the function $\ln G$ attains a local maximum in some
point $y_{0}$ in the interior of $B_{r}(x_{0})$. In case that
\begin{equation}
g(y_{0})|\nabla u(y_{0})|\leq\frac{4}{r}\frac{f(u(y_{0}))}{|f^{\prime}%
(u(y_{0}))|} \label{eq8}%
\end{equation}
would hold, then the estimate of $|\nabla u(y_{0})|$ will turn out to be
trivial. Hence we shall assume that
\[
\frac{1}{g}\leq\frac{r|f^{\prime}|}{4f}|\nabla u|\;\;\text{at}\;y_{0}%
\]
what implies
\begin{align}
2\left\vert \frac{f^{\prime}}{fg}\langle\nabla g,E_{1}\rangle|\nabla
u|\right\vert  &  \leq2\frac{|f^{\prime}|}{fg}|\nabla g||\nabla u|\nonumber\\
&  \leq2\frac{|f^{\prime}||\nabla u|}{f}\frac{r|f^{\prime}||\nabla u|}%
{4f}\frac{|\nabla\rho^{2}|}{r^{2}}\nonumber\\
&  =\frac{f^{\prime2}|\nabla\rho^{2}||\nabla u|^{2}}{2f^{2}r}\nonumber\\
&  =\frac{f^{\prime2}\rho|\nabla\rho||\nabla u|^{2}}{f^{2}r}\nonumber\\
&  \leq\frac{f^{\prime2}|\nabla u|^{2}}{f^{2}} \label{eq9}%
\end{align}
Now we find it necessary to require a much stronger condition than Condition
\ref{cond6}, namely:

\begin{condition}
\label{cond10} there exist positive numbers $\alpha$, $\beta$ and $s_{0}$ such
that
\[
B(s)^{-1}(b(s)+1-\beta|b^{\prime}(s)|s)\geq\alpha,\;\forall s\geq s_{0},
\]
where, as before, $B(s)=\max\{1,1+b(s)\}$.
\end{condition}

It is immediate to see that Condition \ref{cond10} implies Conditions I and
\ref{cond6}.

From (\ref{eq5}) we have%
\begin{align*}
0  &  \geq-|b^{\prime}||\nabla u|\frac{f^{\prime2}}{f^{2}}|\nabla
u|^{2}+(b+1)\left(  \frac{f^{\prime\prime}}{f}-2\frac{f^{\prime2}}{f^{2}%
}\right)  |\nabla u|^{2}+\text{$\operatorname*{Ric}$}(E_{1},E_{1})\\
&  -(|b^{\prime}||\nabla u|+b+1)\left[  \frac{|\nabla g|^{2}}{g^{2}%
}+\left\vert \frac{2f^{\prime}}{fg}\langle\nabla g,E_{1}\rangle|\nabla
u|\right\vert \right]  -\frac{b+1}{g^{2}}|\nabla g|^{2}\\
&  -\frac{1}{g}\left[  (b+1)|\nabla^{2}g|+\sqrt{n-1}|\nabla^{2}g|\right]
-\frac{1}{g^{2}}\left[  (b+1)|\nabla g|^{2}+|\nabla g|^{2}\right]
\end{align*}

By means of (\ref{eq9}) one has
\begin{align*}
0  &  \geq-|b^{\prime}||\nabla u|\frac{f^{\prime2}}{f^{2}}|\nabla
u|^{2}+(b+1)\left(  \frac{f^{\prime\prime}}{f}-2\frac{f^{\prime2}}{f^{2}%
}\right)  |\nabla u|^{2}+\text{$\operatorname*{Ric}$}(E_{1},E_{1})\\
&  -(|b^{\prime}||\nabla u|+b+1)\left[  \frac{4}{g^{2}r^{2}}+\frac{f^{\prime
2}|\nabla u|^{2}}{f^{2}}\right]  -\frac{4(b+1)}{g^{2}r^{2}}\\
&  -\frac{1}{g}\left[  (b+1)\frac{|\nabla^{2}\rho^{2}|}{r^{2}}+\frac
{\sqrt{n-1}|\nabla^{2}\rho^{2}|}{r^{2}}\right]  -\frac{1}{g^{2}}\left[
(b+1)\frac{4}{r^{2}}+\frac{4}{r^{2}}\right] \\
&  =-|b^{\prime}||\nabla u|\frac{f^{\prime2}}{f^{2}}|\nabla u|^{2}%
+(b+1)\left(  \frac{f^{\prime\prime}}{f}-2\frac{f^{\prime2}}{f^{2}}\right)
|\nabla u|^{2}+\text{$\operatorname*{Ric}$}(E_{1},E_{1})\\
&  -(|b^{\prime}||\nabla u|+b+1)\frac{4}{g^{2}r^{2}}-\frac{|b^{\prime}||\nabla
u|f^{\prime2}|\nabla u|^{2}}{f^{2}}-(b+1)\frac{f^{\prime2}|\nabla u|^{2}%
}{f^{2}}\\
&  -\frac{4(b+1)}{g^{2}r^{2}}-\frac{1}{g}\left[  (b+1)\frac{|\nabla^{2}%
\rho^{2}|}{r^{2}}+\frac{\sqrt{n-1}|\nabla^{2}\rho^{2}|}{r^{2}}\right]
-\frac{1}{g^{2}}\left[  (b+1)\frac{4}{r^{2}}+\frac{4}{r^{2}}\right]
\end{align*}
and so
\begin{align*}
0  &  \geq-2|b^{\prime}||\nabla u|\frac{f^{\prime2}}{f^{2}}|\nabla
u|^{2}+(b+1)\left(  \frac{f^{\prime\prime}}{f}-3\frac{f^{\prime2}}{f^{2}%
}\right)  |\nabla u|^{2}+\text{$\operatorname*{Ric}$}(E_{1},E_{1})\\
&  -(|b^{\prime}||\nabla u|+b+1)\frac{4}{g^{2}r^{2}}-\frac{b+1+\sqrt{n-1}%
}{gr^{2}}|\nabla^{2}\rho^{2}|-\frac{1}{g^{2}}\left[  (b+1)\frac{8}{r^{2}%
}+\frac{4}{r^{2}}\right] \\
&  \geq-2|b^{\prime}||\nabla u|\frac{f^{\prime2}}{f^{2}}|\nabla u|^{2}%
+(b+1)\left(  \frac{f^{\prime\prime}}{f}-3\frac{f^{\prime2}}{f^{2}}\right)
|\nabla u|^{2}+\text{$\operatorname*{Ric}$}(E_{1},E_{1})\\
&  -(|b^{\prime}||\nabla u|+b+1)\frac{4}{g^{2}r^{2}}-\frac{\left(
1+\sqrt{n-1}\right)  B}{gr^{2}}|\nabla^{2}\rho^{2}|-\frac{12B}{g^{2}r^{2}}.
\end{align*}
Our Condition \ref{cond10} implies
\[
|b^{\prime}|s\leq\frac{1}{\beta}(b+1)
\]
and hence
\[
b+1+|b^{\prime}|s\leq(1+\frac{1}{\beta})(b+1)\leq(1+\frac{1}{\beta})B.
\]

Thus
\begin{align*}
&  (b+1)\left(  \frac{f^{\prime\prime}}{f}-3\frac{f^{\prime2}}{f^{2}}\right)
|\nabla u|^{2}-2|b^{\prime}||\nabla u|\frac{f^{\prime2}}{f^{2}}|\nabla
u|^{2}\\
&  \leq\frac{B}{g^{2}r^{2}}\left(  14+\frac{4}{\beta}+\left(  2+\sqrt
{n-1}\right)  |\nabla^{2}\rho^{2}|\right)  +|\operatorname*{Ric}\nolimits^{-}|
\end{align*}
Hence there is a constant $C_{0}=C_{0}(n,\beta)$ such that
\begin{align*}
&  B\left[  (b+1)\left(  \frac{f^{\prime\prime}}{f}-3\frac{f^{\prime2}}{f^{2}%
}\right)  |\nabla u|^{2}-2|b^{\prime}||\nabla u|\frac{f^{\prime2}}{f^{2}%
}|\nabla u|^{2}\right] \\
&  \leq\frac{C_{0}}{r^{2}g^{2}}\left(  1+|\nabla^{2}\rho^{2}|\right)
+|\operatorname*{Ric}\nolimits^{-}|.
\end{align*}
We again choose $f(u)=1/\ln(K+u)$. From the assumption that $u\geq0$, one
has\newline%

\begin{align*}
&  {(b+1)}\left(  {\frac{f^{\prime\prime}}{f}-\frac{3f^{\prime2}}{f^{2}}%
}\right)  {-\frac{2|b^{\prime}|sf^{\prime2}}{f^{2}}}\\
&  =\frac{1}{(K+u)^{2}\ln(K+u)}\left[  (1-\frac{1}{\ln(K+u)})(b+1)-\frac
{2|b^{\prime}|s}{\ln(K+u)}\right] \\
&  =\frac{(1-\frac{1}{\ln(K+u)})}{(K+u)^{2}\ln(K+u)}\left[  b+1-\frac
{2|b^{\prime}|s}{\ln(K+u)(1-\frac{1}{\ln(K+u)})}\right] \\
&  \geq\frac{{(1-\frac{1}{\ln K})}}{{(K+u)^{2}\ln(K+u)}}\left[  b+1-\frac
{2|b^{\prime}|s}{\ln K-1}\right]
\end{align*}
By (\ref{cond10}) we choose $K$ with $2/\left(  \ln K-1\right)  =\beta$ so
that
\[
(b+1)\left(  \frac{f^{\prime\prime}}{f}-\frac{3f^{\prime2}}{f^{2}}\right)
-\frac{2|b^{\prime}|sf^{\prime2}}{f^{2}}\geq\frac{\alpha B(1-(\ln K)^{-1}%
)}{(K+u)^{2}\ln(K+u)},\;\forall s\geq s_{0}%
\]
Therefore we get at $y_{0}$,
\[
g^{2}|\nabla u|^{2}\leq\frac{1}{\alpha}\left(  \frac{C_{0}}{r^{2}}\left(
1+|\nabla^{2}\rho^{2}|\right)  +|\text{$\operatorname*{Ric}$}|\right)
\frac{(K+u)^{2}\ln(K+u)}{(1-(\ln K)^{-1})},
\]
provided that $|\nabla u(y_{0})|\geq s_{0}$ and (\ref{eq8}) does not hold.
Since by construction $G(x_{0})\leq G(y_{0}),$ setting
\[
{M=\max_{B_{r}(x_{0})}u,}%
\]
we therefore either have
\begin{gather*}
\frac{|\nabla u(x_{0})|}{\ln(K+u(x_{0}))}\leq\frac{g(y_{0})|\nabla u(y_{0}%
)|}{\ln(K+u(y_{0}))}\\
\leq\left\{  \frac{1}{\alpha}\left[  \frac{C_{0}}{r^{2}}\left(  1+|\nabla
^{2}\rho^{2}|\right)  +|\operatorname*{Ric}\nolimits^{-}|\right]  \right\}
^{1/2}\frac{(K+u(y_{0}))(\ln(K+u(y_{0}))^{1/2}}{\ln(K+u(y_{0}))(1-(\ln
K)^{-1})^{1/2}}\\
\leq\left\{  \frac{1}{\alpha}\left[  \frac{C_{0}}{r^{2}}\left(  1+|\nabla
^{2}\rho^{2}|\right)  +|\operatorname*{Ric}\nolimits^{-}|\right]  \right\}
^{1/2}\frac{(K+M)}{(\ln K)^{1/2}(1-(\ln K)^{-1})^{1/2}}%
\end{gather*}
and so
\begin{align*}
&  \left\vert \nabla u(x_{0})\right\vert \\
&  \leq\left\{  \frac{1}{\alpha}\left[  \frac{C_{0}}{r^{2}}\left(
1+\max|\nabla^{2}\rho^{2}|\right)  +|\operatorname*{Ric}\nolimits^{-}|\right]
\right\}  ^{1/2}\frac{(K+M)\left(  \ln(K+M)\right)  ^{\frac{3}{2}}}{(1-(\ln
K)^{-1})^{1/2}}%
\end{align*}
or $\left\vert \nabla u(y_{0})\right\vert \leq s_{0}$, leading to
\[
\frac{|\nabla u(x_{0})|}{\ln(K+u(x_{0}))}\leq\frac{g(y_{0})\left\vert \nabla
u(y_{0})\right\vert }{\ln(K+u(y_{0}))}\leq\frac{\left\vert \nabla
u(y_{0})\right\vert }{\ln K},
\]
and so
\[
\left\vert \nabla u(x_{0})\right\vert \leq s_{0}\frac{\ln(K+M)}{\ln K},
\]
or, finally (\ref{eq8}) holds, leading to
\begin{align*}
\frac{|\nabla u(x_{0})|}{\ln(K+u(x_{0}))}  &  \leq\frac{g(y_{0})|\nabla
u(y_{0})|}{\ln(K+u(y_{0}))}\\
&  \leq\frac{4f}{r|f^{\prime}|}\frac{1}{\ln(K+u(y_{0}))}\\
&  =\frac{4}{r}(K+u(y_{0}))
\end{align*}
and so
\[
|\nabla u(x_{0})|\leq\frac{4(K+M)\ln(K+M)}{r}.
\]
We thus proved

\begin{theorem}
\label{locplapl}Let $u$ be a nonnegative solution of (\ref{pde3rep}) in
$B_{r}(x_{0})$, where $r$ is smaller than the distance of $x_{0}$ to its cut
locus, and let Condition \ref{cond10} be satisfied. Then there are constants
$C$ and $K$ depending only on $n$ and the numbers $\alpha$, $\beta$, and
$s_{0}$ in Condition \ref{cond10} such that
\begin{align*}
&  \left\vert \nabla u(x_{0})\right\vert \\
&  \leq C\left\{  1+\frac{1}{r}+\frac{1}{r^{2}}\left(  1+\max_{B_{r}(x_{0}%
)}|\nabla^{2}\rho^{2}|\right)  +\max_{B_{r}(x_{0})}|\operatorname*{Ric}%
\nolimits^{-}|\right\}  ^{1/2}(K+M)\left(  \ln(K+M)\right)  ^{2}%
\end{align*}
with $\displaystyle{M=\max_{B_{r}(x_{0})}u}$.
\end{theorem}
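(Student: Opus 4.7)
The plan is to apply Lemma \ref{lem2} with the barrier function $g(x)=1-\rho(x)^2/r^2$ (which vanishes on $\partial B_r(x_0)$ and is $1$ at $x_0$), together with $f(u)=1/\ln(K+u)$ and the simplest choice $F(s)=s$, with $K$ to be chosen in terms of $\beta$ from Condition \ref{cond10}. Since $r$ is smaller than the distance from $x_0$ to its cut locus, $\rho^2$ is smooth on $\overline{B_r(x_0)}$ and $\nabla^2\rho^2$ is bounded; hence $G=gfF(|\nabla u|)\ge 0$ vanishes on $\partial B_r(x_0)$. Assuming $\nabla u\not\equiv 0$, $\ln G$ attains an interior maximum at some $y_0\in B_r(x_0)$, and Lemma \ref{lem2} applies at $y_0$.

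First I would insert $F(s)=s$ into the two identities of Lemma \ref{lem2}, obtaining the simplifications (\ref{tk3}), (\ref{tk4}) and the master inequality (\ref{eq5}). The core of the argument is to show that at $y_0$ we have an estimate of the form
\[
g^{2}|\nabla u|^{2}\leq\tfrac{C}{\alpha}\Bigl(1+\tfrac{1}{r^{2}}(1+|\nabla^{2}\rho^{2}|)+|\operatorname*{Ric}\nolimits^{-}|\Bigr)(K+u)^{2}\ln(K+u).
\]
To reach this I would split into two subcases. If inequality (\ref{eq8}) holds at $y_0$, then $g(y_0)|\nabla u(y_0)|$ is already bounded by $4(K+u(y_0))/r$ times $\ln(K+u(y_0))$, giving directly the required estimate after dividing by $\ln(K+u)$. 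Otherwise the reverse inequality lets me dominate the mixed cross term $2(f'/(fg))\langle\nabla g,E_1\rangle|\nabla u|$ by $f'^2|\nabla u|^2/f^2$ as in (\ref{eq9}), and I can then group the terms on the right of (\ref{eq5}) cleanly into two types: one purely geometric, bounded by $(B/g^2r^2)(\text{const}+|\nabla^2\rho^2|)+|\operatorname*{Ric}^{-}|$, and one of the form $B[(b+1)(f''/f-3f'^2/f^2)-2|b'|s f'^2/f^2]|\nabla u|^{2}$.

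The main obstacle, and the reason Condition \ref{cond10} is stronger than Condition \ref{cond6}, is to make this second type a useful positive quantity: I need to control $|b'|s$ by a small multiple of $b+1$, and simultaneously absorb the extra $f'^2/f^2$ contributions coming from the mixed terms involving $\nabla g$. Using $f'/f=-1/[(K+u)\ln(K+u)]$ and $f''/f=1/[(K+u)^2\ln(K+u)]+2f'^2/f^2$, the bracket rewrites (up to a positive factor $1/[(K+u)^2\ln(K+u)]$) as $(1-1/\ln(K+u))(b+1)-2|b'|s/\ln(K+u)$, which I bound below by $(1-1/\ln K)[b+1-2|b'|s/(\ln K-1)]$. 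Choosing $K$ so that $2/(\ln K-1)=\beta$ and invoking Condition \ref{cond10} then yields the lower bound $\alpha B(1-(\ln K)^{-1})/[(K+u)^2\ln(K+u)]$ once $|\nabla u(y_0)|\ge s_0$.

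Finally I would transfer the estimate from $y_0$ to $x_0$ using $G(x_0)\leq G(y_0)$: since $g(x_0)=1$, one gets $|\nabla u(x_0)|/\ln(K+u(x_0))\leq g(y_0)|\nabla u(y_0)|/\ln(K+u(y_0))$, and inserting the bound on $g(y_0)|\nabla u(y_0)|$ derived above, together with $u(y_0)\leq M$, produces the stated estimate
\[
|\nabla u(x_0)|\leq C\Bigl\{1+\tfrac{1}{r}+\tfrac{1}{r^{2}}\bigl(1+\max_{B_r(x_0)}|\nabla^{2}\rho^{2}|\bigr)+\max_{B_r(x_0)}|\operatorname*{Ric}\nolimits^{-}|\Bigr\}^{1/2}(K+M)\,(\ln(K+M))^{2}.
\]
The three scenarios (the estimate just derived, the trivial case $|\nabla u(y_0)|\leq s_0$, and the case when (\ref{eq8}) holds) together cover all possibilities, each giving a bound of the required form after possibly enlarging the constant $C$. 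The bulk of the difficulty is not the geometric barrier construction but the algebraic bookkeeping ensuring that the coefficient of $|\nabla u|^4$ in (\ref{eq5}) has the correct sign after exploiting Condition \ref{cond10}; everything else is routine manipulation.
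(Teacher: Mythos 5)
Your proposal reproduces the paper's argument essentially verbatim: the same auxiliary function $G=g\,f(u)\,|\nabla u|$ with $g=1-\rho^2/r^2$ and $f=1/\ln(K+u)$, the same invocation of Lemma \ref{lem2} at an interior maximum $y_0$ of $\ln G$, the same case split around (\ref{eq8}), the same bound (\ref{eq9}) on the mixed cross term, the same computation of $f'/f$ and $f''/f$, the same choice $2/(\ln K-1)=\beta$ exploiting Condition \ref{cond10}, and the same three-way resolution (the main estimate, $|\nabla u(y_0)|\leq s_0$, and the case where (\ref{eq8}) holds) transferred to $x_0$ via $G(x_0)\leq G(y_0)$. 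This matches the paper's proof both in structure and in every substantive detail.
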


We remark that Conditions \ref{cond6} and \ref{cond10} are for example
fulfilled if $b(s)+1=cs^{m}$ for some $c>0,$ $m\geq0.$ Thus, Theorems
\ref{globplapl} and \ref{locplapl} are valid for the $p-$Laplacian where
$b=p-2,$ $p>1.$

\subsubsection{The case of strong decay of the eigenvalue ratio}

\qquad For the global estimates we may use the same type of auxiliary function
as before, though with a different $f,$ namely, $f(u)=\exp(Ku)$ with a
positive constant $K.$ With $g\equiv1$ we obtain from (\ref{eq5})%
\begin{align*}
0  &  \geq-b^{\prime}|\nabla u|\frac{f^{\prime2}}{f^{2}}|\nabla u|^{2}%
+(b+1)\left(  \frac{f^{\prime\prime}}{f}-2\frac{f^{\prime2}}{f^{2}}\right)
|\nabla u|^{2}+\text{$\operatorname*{Ric}$}(E_{1},E_{1})\\
&  \geq K^{2}\left(  -b^{\prime}\left\vert \nabla u\right\vert -\left(
b+1\right)  \right)  \left\vert \nabla u\right\vert ^{2}-|\operatorname*{Ric}%
\nolimits^{-}|.
\end{align*}

If we introduce the condition:

\begin{condition}
\label{cond11-1}there are positive numbers $\alpha$ and $s_{0}$ such that%
\[
\left(  -b^{\prime}(s)s-\left(  b(s)+1\right)  \right)  s^{2}\geq\alpha,\text{
}s\geq s_{0}%
\]

\end{condition}

\noindent and choose
\[
{K>}\left(  {\frac{1}{\alpha}\max_{\Omega}|\operatorname*{Ric}\nolimits^{-}%
|}\right)  {^{1/2}},
\]
then we immediately get the result:

\begin{theorem}
\label{globmin}If Condition \ref{cond11-1} holds and the function
$\exp(Ku)|\nabla u|$ attains a local maximum in an interior point $y_{0}$ of
$\Omega$ then it follows that $\left\vert \nabla u(y_{0})\right\vert \leq
s_{0}$, where $K$ must be chosen as above.
\end{theorem}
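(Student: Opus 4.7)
The plan is to follow the template already laid out in the text: apply Lemma \ref{lem2} with $g\equiv 1$, $F(s)=s$, and the new choice $f(u)=\exp(Ku)$, then exploit Condition \ref{cond11-1} at the local maximum $y_{0}$ together with the specified choice of $K$.

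First I would note that the choice $f(u)=\exp(Ku)$ gives the particularly simple ratios $f'/f=K$ and $f''/f=K^{2}$, so that
\[
\frac{f''}{f}-2\frac{f'^{2}}{f^{2}}=-K^{2}.
\]
With $g\equiv 1$, the boundary terms in (\ref{eq5}) drop out, so the inequality derived just before the theorem statement collapses to
\[
0\;\geq\;K^{2}\bigl(-b'(|\nabla u|)\,|\nabla u|-(b(|\nabla u|)+1)\bigr)|\nabla u|^{2}+\operatorname{Ric}(E_{1},E_{1})
\]
at $y_{0}$. Bounding the Ricci term below by $-|\!\operatorname{Ric}^{-}\!|$ yields
\[
K^{2}\bigl(-b'(s)s-(b(s)+1)\bigr)s^{2}\;\leq\;\max_{\Omega}|\!\operatorname{Ric}^{-}\!|,\qquad s:=|\nabla u(y_{0})|.
\]

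Now I would argue by contradiction: suppose $s=|\nabla u(y_{0})|>s_{0}$. Then Condition \ref{cond11-1} applies and gives
\[
\bigl(-b'(s)s-(b(s)+1)\bigr)s^{2}\;\geq\;\alpha.
\]
Combined with the previous inequality this forces $K^{2}\alpha\leq\max_{\Omega}|\!\operatorname{Ric}^{-}\!|$, which directly contradicts the hypothesis $K>\bigl(\alpha^{-1}\max_{\Omega}|\!\operatorname{Ric}^{-}\!|\bigr)^{1/2}$. Hence $|\nabla u(y_{0})|\leq s_{0}$.

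The proof is essentially mechanical once Lemma \ref{lem2} is in hand; there is no serious obstacle. The only point deserving mild care is verifying that the derivation of (\ref{eq5})—which was carried out in the mild-decay subsection under the assumption $g\equiv 1$ and $F(s)=s$—is in fact valid here, since nothing in that derivation used the specific form of $f$ nor the sign of $b'$. The exponential $f$ is tailor-made for this setting because the combination $f''/f-2f'^{2}/f^{2}$ becomes a negative constant, which pairs cleanly with the $(b+1)$ coefficient and, together with the $-b'|\nabla u|$ term, reproduces exactly the quantity controlled by Condition \ref{cond11-1}.
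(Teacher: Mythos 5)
Your argument is correct and follows exactly the paper's approach: set $g\equiv1$, $F(s)=s$, $f(u)=\exp(Ku)$ in Lemma \ref{lem2}, reduce (\ref{eq5}) to the scalar inequality $K^{2}\bigl(-b'(s)s-(b(s)+1)\bigr)s^{2}\leq|\operatorname{Ric}^{-}|$ at $y_{0}$, and then observe that Condition \ref{cond11-1} together with the prescribed lower bound on $K$ rules out $|\nabla u(y_{0})|\geq s_{0}$. The only cosmetic difference is phrasing the final step as an explicit contradiction rather than a direct conclusion.
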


In order to derive an analogous local gradient estimate it seems to be
necessary to slightly modify the auxiliary function $G$ and choose
\[
G(x)=g(x)f(u)\ln|\nabla u|,\;g=1-\frac{\rho^{2}}{r^{2}}.
\]
This choice and the subsequent calculations are inspired by a paper of Wang
\cite{W} which deals with the Euclidean mean curvature equation. It is
sufficient to consider this function in a range where $|\nabla u|>1$ such that
also $\ln G$ is well defined. Again we consider $G$ in the neighborhood of a
point $y_{0}\in\Omega$ where $G$ attains a local maximum. \newline Taking the
function $F(s)=\ln(s)$ in Lemma \ref{lem2}, we obtain at $y_{0}$
\begin{equation}
\frac{\nabla^{2}u(E_{1},E_{i})}{|\nabla u|\ln|\nabla u|}=-\frac{f^{\prime}}%
{f}\langle\nabla u,E_{i}\rangle-\frac{\langle\nabla g,E_{i}\rangle}{g}
\label{af1}%
\end{equation}
and
\begin{align}
0  &  \geq-\frac{1}{|\nabla u|^{2}\ln|\nabla u|}\left[  b^{\prime}|\nabla
u|+(b+1)(1+\frac{1}{\ln|\nabla u|})\right]  \nabla^{2}u(E_{1},E_{1}%
)^{2}\nonumber\\
&  +\sum_{\substack{i,j\\i\geq2}}\frac{\nabla^{2}u(E_{i},E_{j})^{2}}{|\nabla
u|^{2}\ln|\nabla u|}-\frac{1}{|\nabla u|^{2}\ln|\nabla u|}\left[  b+1+\frac
{1}{\ln|\nabla u|}\right]  \sum_{i\geq2}\nabla^{2}u(E_{1},E_{i})^{2}%
\nonumber\\
&  +(b+1)\frac{f^{\prime\prime}}{f}|\nabla u|^{2}+\frac{1}{\ln|\nabla
u|}\text{$\operatorname*{Ric}$}(E_{1},E_{1})\nonumber\\
&  +\frac{1}{g}\left[  (b+1)\nabla^{2}g(E_{1},E_{1})+\sum_{i\geq2}\nabla
^{2}g(E_{i},E_{i})\right] \nonumber\\
&  -(b+1)\left[  \frac{f^{\prime2}}{f^{2}}|\nabla u|^{2}+\frac{1}{g^{2}%
}\langle\nabla g,E_{1}\rangle^{2}\right]  -\frac{1}{g^{2}}\sum_{i\geq2}%
\langle\nabla g,E_{i}\rangle^{2}. \label{af2}%
\end{align}
From (\ref{af1}) we have
\begin{equation}
\frac{\nabla^{2}u(E_{1},E_{1})^{2}}{|\nabla u|^{2}\ln^{2}|\nabla u|}%
=\frac{f^{\prime2}}{f^{2}}|\nabla u|^{2}+\frac{\langle\nabla g,E_{1}%
\rangle^{2}}{g^{2}}+\frac{2f^{\prime}}{fg}\langle\nabla g,E_{1}\rangle|\nabla
u| \label{bf1}%
\end{equation}
and
\begin{equation}
\frac{\nabla^{2}u(E_{1},E_{i})^{2}}{|\nabla u|^{2}\ln^{2}|\nabla u|}%
=\frac{\langle\nabla g,E_{i}\rangle^{2}}{g^{2}},\;\forall i=2,\ldots,n
\label{bf2}%
\end{equation}
Since $b+1\geq0,$ it follows that%
\begin{align}
\sum_{\substack{i,j\\i\geq2}}\nabla^{2}u(E_{i},E_{j})^{2}  &  =\sum_{i\geq
2}\nabla^{2}u(E_{i},E_{1})^{2}+\sum_{\substack{i\geq2\\j\geq2}}\nabla
^{2}u(E_{i},E_{j})^{2}\nonumber\\
&  =-b\sum_{i\geq2}\nabla^{2}u(E_{i},E_{1})^{2}+\left(  b+1\right)
\sum_{i\geq2}\nabla^{2}u(E_{i},E_{1})^{2}\nonumber\\
&  +\sum_{\substack{i\geq2\\j\geq2}}\nabla^{2}u(E_{i},E_{j})^{2}\nonumber\\
&  \geq-b\sum_{i\geq2}\nabla^{2}u(E_{i},E_{1})^{2}. \label{bfsum}%
\end{align}
By (\ref{af2}), (\ref{bf1}), (\ref{bf2}) and (\ref{bfsum}), we have%
\begin{align}
0  &  \geq-\frac{1}{|\nabla u|^{2}\ln|\nabla u|}\left[  b^{\prime}|\nabla
u|+(b+1)(1+\frac{1}{\ln|\nabla u|})\right]  \nabla^{2}u(E_{1},E_{1}%
)^{2}\nonumber\\
&  -\frac{1}{|\nabla u|^{2}\ln|\nabla u|}\left[  2b+1+\frac{1}{\ln|\nabla
u|}\right]  \sum_{i\geq2}\nabla^{2}u(E_{1},E_{i})^{2}+(b+1)\frac
{f^{\prime\prime}}{f}|\nabla u|^{2}\nonumber\\
&  +\frac{1}{\ln|\nabla u|}\text{$\operatorname*{Ric}$}(E_{1},E_{1})+\frac
{1}{g}\left[  (b+1)\nabla^{2}g(E_{1},E_{1})+\sum_{i\geq2}\nabla^{2}%
g(E_{i},E_{i})\right] \nonumber\\
&  -(b+1)\left[  \frac{\nabla^{2}u(E_{1},E_{1})^{2}}{|\nabla u|^{2}\ln
^{2}|\nabla u|}-\frac{2f^{\prime}}{fg}\langle\nabla g,E_{1}\rangle|\nabla
u|\right]  -\sum_{i\geq2}\frac{\nabla^{2}u(E_{1},E_{i})^{2}}{|\nabla u|^{2}%
\ln^{2}|\nabla u|}\nonumber\\
&  =-\frac{1}{|\nabla u|^{2}\ln|\nabla u|}\left[  b^{\prime}|\nabla
u|+(b+1)(1+\frac{2}{\ln|\nabla u|})\right]  \nabla^{2}u(E_{1},E_{1}%
)^{2}\nonumber\\
&  -\frac{1}{|\nabla u|^{2}\ln|\nabla u|}\left[  2b+1+\frac{2}{\ln|\nabla
u|}\right]  \sum_{i\geq2}\nabla^{2}u(E_{1},E_{i})^{2}+(b+1)\frac
{f^{\prime\prime}}{f}|\nabla u|^{2}\nonumber\\
&  +\frac{1}{\ln|\nabla u|}\text{$\operatorname*{Ric}$}(E_{1},E_{1})+\frac
{1}{g}\left[  (b+1)\nabla^{2}g(E_{1},E_{1})+\sum_{i\geq2}\nabla^{2}%
g(E_{i},E_{i})\right] \nonumber\\
&  +\frac{2(b+1)f^{\prime}}{fg}\langle\nabla g,E_{1}\rangle|\nabla u|
\label{ext}%
\end{align}
For a given $\varepsilon>0$ only depending on a structural condition for
(\ref{pde3rep}) we may assume that
\begin{equation}
\ln|\nabla u(y_{0})|\geq\frac{2}{\varepsilon} \label{cond15}%
\end{equation}
because if (\ref{cond15}) does not hold the estimate for $|\nabla u|$ will
turn out to be trivial.

From (\ref{ext}) and (\ref{cond15}) we obtain
\begin{align*}
0  &  \geq-\frac{1}{|\nabla u|^{2}\ln|\nabla u|}\left[  b^{\prime}|\nabla
u|+(b+1)(1+\varepsilon)\right]  \nabla^{2}u(E_{1},E_{1})^{2}\\
&  -\frac{1}{|\nabla u|^{2}\ln|\nabla u|}(2b+1+\varepsilon)\sum_{i\geq2}%
\nabla^{2}u(E_{1},E_{i})^{2}+(b+1)\frac{f^{\prime\prime}}{f}|\nabla u|^{2}\\
&  +\frac{1}{\ln|\nabla u|}\text{$\operatorname*{Ric}$}(E_{1},E_{1})+\frac
{1}{g}\left[  (b+1)\nabla^{2}g(E_{1},E_{1})+\sum_{i\geq2}\nabla^{2}%
g(E_{i},E_{i})\right] \\
&  +\frac{2(b+1)f^{\prime}}{fg}\langle\nabla g,E_{1}\rangle|\nabla u|.
\end{align*}
Let us assume that%
\begin{equation}
2b\left(  \left\vert \nabla u\right\vert \right)  +1+\varepsilon\leq0.
\label{negat}%
\end{equation}
Hence,%
\begin{align}
0  &  \geq-\frac{1}{|\nabla u|^{2}\ln|\nabla u|}\left[  b^{\prime}|\nabla
u|+(b+1)(1+\varepsilon)\right]  \nabla^{2}u(E_{1},E_{1})^{2}\nonumber\\
&  +(b+1)\frac{f^{\prime\prime}}{f}|\nabla u|^{2}\frac{1}{\ln|\nabla
u|}\text{$\operatorname*{Ric}$}(E_{1},E_{1})+\frac{2(b+1)f^{\prime}}%
{fg}\langle\nabla g,E_{1}\rangle|\nabla u|\nonumber\\
&  +\frac{1}{g}\left[  (b+1)\nabla^{2}g(E_{1},E_{1})+\sum_{i\geq2}\nabla
^{2}g(E_{i},E_{i})\right]  . \label{tg15}%
\end{align}
\qquad We shall choose $f$ in such a way that
\[
\frac{f^{\prime}}{f}\geq1
\]
and assume that
\begin{equation}
rg(y_{0})\left\vert \nabla u(y_{0})\right\vert \geq4 \label{tg16}%
\end{equation}
because otherwise
\[
g(y_{0})\ln\left\vert \nabla u(y_{0})\right\vert \leq\frac{1}{4}%
\]
and the estimative of $|\nabla u(x_{0})|$ becomes trivial. By (\ref{tg16}) we
have
\begin{equation}
\left\vert \frac{\langle\nabla g,E_{i}\rangle}{g}\right\vert \leq\frac
{2\rho|\nabla\rho|}{gr^{2}}\leq\frac{2}{gr}\leq\frac{|\nabla u|}{2}.
\label{tr1}%
\end{equation}
It follows then from (\ref{af1}) and (\ref{tr1}) that
\begin{align}
\left\vert \frac{\nabla^{2}u(E_{1},E_{1})}{|\nabla u|\ln|\nabla u|}%
\right\vert  &  \geq\left\vert \frac{f^{\prime}\langle\nabla u,E_{1}\rangle
}{f}\right\vert -\left\vert \frac{\langle\nabla g,E_{1}\rangle}{g}\right\vert
\nonumber\\
&  \geq\frac{f^{\prime}|\nabla u|}{f}-\frac{|\nabla u|}{2}\nonumber\\
&  \geq\frac{f^{\prime}|\nabla u|}{f}-\frac{f^{\prime}|\nabla u|}{2f}%
=\frac{f^{\prime}|\nabla u|}{2f} \label{tr2}%
\end{align}
and
\begin{align}
\frac{2f^{\prime}}{fg}(b+1)\langle\nabla g,E_{1}\rangle|\nabla u|  &
\geq-\frac{2f^{\prime}}{f}(b+1)\left\vert \frac{\left\langle \nabla
g,E_{1}\right\rangle }{g}\right\vert \left\vert \nabla u\right\vert
\nonumber\\
&  \geq-\frac{f^{\prime}}{f}(b+1)|\nabla u|^{2}. \label{tr4}%
\end{align}
From (\ref{tg15}), (\ref{tr2}) and (\ref{tr4}) one obtains
\begin{align*}
0  &  \geq-\frac{f^{\prime2}}{4f^{2}}\left[  b^{\prime}|\nabla
u|+(b+1)(1+\varepsilon)\right]  |\nabla u|^{2}\ln|\nabla u|+(b+1)\left(
\frac{f^{\prime\prime}}{f}-\frac{f^{\prime}}{f}\right)  |\nabla u|^{2}\\
&  +\frac{1}{\ln|\nabla u|}\text{$\operatorname*{Ric}$}(E_{1},E_{1})+\frac
{1}{g}\left[  (b+1)\nabla^{2}g(E_{1},E_{1})+\sum_{i\geq2}\nabla^{2}%
g(E_{i},E_{i})\right]
\end{align*}
where we assumed that
\begin{equation}
b^{\prime}|\nabla u|+(1+\varepsilon)(b+1)\leq0. \label{cond17}%
\end{equation}
Finally we therefore arrive at
\begin{align}
0  &  \geq-\frac{f^{\prime2}}{4f^{2}}\left[  b^{\prime}|\nabla u|+(b+1)\left(
1+\varepsilon\right)  \right]  |\nabla u|^{2}\ln|\nabla u|+(b+1)\left(
\frac{f^{\prime\prime}}{f}-\frac{f^{\prime}}{f}\right)  |\nabla u|^{2}%
\nonumber\\
&  +\frac{1}{\ln|\nabla u|}\text{$\operatorname*{Ric}$}(E_{1},E_{1})+\frac
{1}{g}\left[  (b+1)\nabla^{2}g(E_{1},E_{1})+\sum_{i\geq2}\nabla^{2}%
g(E_{i},E_{i})\right]  . \label{td}%
\end{align}
We now introduce the following hypothesis:

\begin{condition}
\label{cond18-1}There are positive numbers $\varepsilon$, $\alpha$, and
$s_{0}$ such that
\begin{equation}
\left(  -b^{\prime}(s)s-\left(  1+\varepsilon\right)  \left(  b(s)+1\right)
\right)  s^{2}\geq\alpha\text{\ for}\;s\geq s_{0}. \label{bcond18}%
\end{equation}

\end{condition}

\begin{remark}
\label{rob}From (\ref{bcond18}) we have%
\[
-\frac{b^{\prime}(s)}{b(s)+1}>\frac{1+\varepsilon}{s}%
\]
Integrating between $s_{0}$ and $s$ we arrive at%
\[
-\ln\left(  \frac{b(s)+1}{b(s_{0})+1}\right)  >\left(  1+\varepsilon\right)
\ln\left(  \frac{s}{s_{0}}\right)
\]
and so%
\[
\frac{b(s_{0})+1}{b(s)+1}>\left(  \frac{s}{s_{0}}\right)  ^{1+\varepsilon}%
\]
then%
\[
2b(s)+1+\varepsilon<-1+\varepsilon+2\left(  \frac{s_{0}}{s}\right)
^{1+\varepsilon}\left(  b\left(  s_{0}\right)  +1\right)  \rightarrow
-1+\varepsilon<0
\]
when $s\rightarrow+\infty.$ Taking $s_{0}>0$ large enough we arrive at%
\[
2b(s)+1+\varepsilon<0\text{ for }s\geq s_{0}.
\]
It follows from the Condition \ref{cond18-1} that (\ref{negat}) is valid.
\end{remark}

\begin{remark}
In the case $b\left(  s\right)  +1=\frac{1}{s^{2}+1}$ and $\varepsilon<1,$ we
have%
\[
\left(  -b^{\prime}(s)s-\left(  1+\varepsilon\right)  \left(  b(s)+1\right)
\right)  s^{2}=\frac{2s^{4}}{\left(  s^{2}+1\right)  ^{2}}-\frac
{(1+\varepsilon)s^{2}}{s^{2}+1}\rightarrow1-\varepsilon>0
\]
and%
\[
2b\left(  s\right)  +1+\varepsilon=\frac{-2s^{2}}{s^{2}+1}+1+\varepsilon
\rightarrow-1+\varepsilon<0
\]
when $s\rightarrow+\infty.$
\end{remark}

It follows from Condition \ref{cond18-1} that (\ref{cond17}) is also valid and
that $b^{\prime}(s)\leq0$ for $s\geq s_{0}$ and hence there is a number
$\beta\geq1$ such that $b(s)+1\leq\beta$ for $s\geq s_{0}$. We now choose
$f(u)=\exp(Ku)$ with $K>1$. From (\ref{cond15}), (\ref{td}) and the condition
(\ref{cond18-1}) we have
\begin{align*}
0  &  \geq-\frac{K^{2}}{4}\left[  b^{\prime}|\nabla u|+\left(  1+\varepsilon
\right)  (b+1)\right]  |\nabla u|^{2}\ln|\nabla u|\\
&  +(b+1)\left(  K^{2}-K\right)  |\nabla u|^{2}-\frac{1}{\ln|\nabla
u|}|\operatorname*{Ric}\nolimits^{-}|\\
&  -\frac{1}{g}\left[  (b+1)\frac{|\nabla^{2}\rho|}{r^{2}}+\sqrt{n-1}%
\frac{|\nabla^{2}\rho^{2}|}{r^{2}}\right] \\
&  \geq\frac{K^{2}\alpha}{4}\ln|\nabla u|-\frac{\varepsilon}{2}%
|\operatorname*{Ric}\nolimits^{-}|-\frac{\beta+\sqrt{n-1}}{g}\frac{|\nabla
^{2}\rho^{2}|}{r^{2}}%
\end{align*}
We can now conclude that
\[
g(y_{0})\ln|\nabla u(y_{0})|\leq\frac{4}{K^{2}\alpha}\left(  \frac
{\varepsilon}{2}|\operatorname*{Ric}\nolimits^{-}|+\frac{\beta+\sqrt{n-1}%
}{r^{2}}|\nabla^{2}\rho^{2}|\right)  ,
\]
unless
\[
\ln|\nabla u(y_{0})|\leq\max\left\{  \frac{4}{g(y_{0})r},\ln s_{0}\right\}  .
\]

Since by construction $G(x_{0})\leq G(y_{0})$ we therefore either have
\begin{align*}
\exp(Ku(x_{0}))\ln|\nabla u(x_{0})|  &  \leq g(y_{0})\exp(Ku(y_{0}))\ln|\nabla
u(y_{0})|\\
&  \leq\frac{4\exp(Ku(y_{0}))}{K^{2}\alpha}\left(  \frac{\varepsilon}%
{2}|\operatorname*{Ric}\nolimits^{-}|+\frac{\beta+\sqrt{n-1}}{r^{2}}%
|\nabla^{2}\rho^{2}|\right)  .
\end{align*}
Hence there is a constant $C=C(n,\beta,\alpha,\varepsilon)$ such that
\begin{align*}
\ln|\nabla u(x_{0})|  &  \leq C\frac{\exp(KM)}{K^{2}}\max\left\{
|\operatorname*{Ric}\nolimits^{-}|+\frac{1}{r^{2}}|\nabla^{2}\rho^{2}|\right\}
\\
&  \leq C\exp(KM)\max\left\{  |\operatorname*{Ric}\nolimits^{-}|+\frac
{1}{r^{2}}|\nabla^{2}\rho^{2}|\right\}  ,
\end{align*}
where
\[
M=\max_{x\in B_{r}(x_{0})}|u(x)-u(x_{0})|
\]
\newline or
\[
g(y_{0})\ln|\nabla u(y_{0})|\leq\frac{4}{r},
\]
leading to
\[
\exp(Ku(x_{0}))\ln|\nabla u(x_{0})|\leq g(y_{0})\exp(Ku(y_{0}))\ln|\nabla
u(x_{0})|\leq\frac{4}{r}\exp(Ku(y_{0}))
\]
and thus
\[
\ln|\nabla u(x_{0})|\leq\frac{4}{r}\exp(KM)
\]
or, finally $|\nabla u(y_{0})|\leq s_{0}$ holds, leading to
\[
\exp(Ku(x_{0}))\ln|\nabla u(x_{0})|\leq g(y_{0})\exp(Ku(y_{0}))\ln s_{0}%
\leq\exp(Ku(y_{0}))\ln s_{0}%
\]
and so
\[
\ln|\nabla u(x_{0})|\leq\exp(KM)\ln s_{0}.
\]
We thus proved

\begin{theorem}
\label{locmin}Let $u$ be a solution of (\ref{pde3rep}) in $\Omega$ such that
the Condition \ref{cond18-1} is satisfied and let $x_{0}\in\Omega$ and $r>0$
such that the geodesic ball $B_{r}(x_{0})$, is contained in $\Omega$ and is
disjoint of the cut locus of $x_{0}$. Then there is a constant $C$ depending
only $n$ and the numbers $\varepsilon$, $\alpha$, and $s_{0}$ in the Condition
\ref{cond18-1} such that
\[
\ln\left\vert \nabla u(x_{0})\right\vert \leq C\exp(KM)\max\left\{  1+\frac
{1}{r}+\max\left(  |\operatorname*{Ric}\nolimits^{-}|+\frac{1}{r^{2}}%
|\nabla^{2}\rho^{2}|\right)  \right\}
\]
for all $K>1,$ where
\[
{M=\max_{x\in B_{r}(x_{0})}|u(x)-u(x_{0})|}.
\]

\end{theorem}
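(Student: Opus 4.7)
The plan is to apply Lemma \ref{lem2} to the auxiliary function
\[
G(x) = g(x)\, f(u)\, \ln|\nabla u|, \qquad g(x) = 1 - \rho(x)^2/r^2, \qquad f(u) = \exp(Ku),
\]
on the geodesic ball $B_r(x_0)$, where $\rho(x) = \operatorname{dist}(x,x_0)$. Since $g$ vanishes on $\partial B_r(x_0)$, the function $\ln G$ attains an interior maximum at some point $y_0 \in B_r(x_0)$ (assuming we are not already in a trivial regime where $|\nabla u|$ is bounded by something like $s_0$ or $4/r$). All subsequent computations will be carried out at $y_0$, relative to the local orthonormal frame $E_1 = \nabla u/|\nabla u|, E_2, \ldots, E_n$.

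The key identities and inequalities, specialized to $F(s) = \ln s$, have already been derived in equations \eqref{af1}--\eqref{td} of the excerpt. The strategy is to combine these with Condition \ref{cond18-1} to produce a contradiction whenever $|\nabla u(y_0)|$ is too large. More precisely, I would proceed through the following reductions: first assume the nontrivial alternatives $\ln|\nabla u(y_0)| \geq 2/\varepsilon$ and $rg(y_0)|\nabla u(y_0)| \geq 4$, since otherwise $|\nabla u(x_0)|$ is immediately bounded via $G(x_0) \leq G(y_0)$. Next, verify by Remark \ref{rob} that Condition \ref{cond18-1} forces $2b + 1 + \varepsilon \leq 0$ and $b'|\nabla u| + (1+\varepsilon)(b+1) \leq 0$ at $y_0$, so that the negative sign structure used in deriving \eqref{tg15} and \eqref{td} genuinely holds. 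The lower bound $f'/f = K \geq 1$ then enters through the estimate \eqref{tr2} for $\nabla^2 u(E_1,E_1)/(|\nabla u|\ln|\nabla u|)$.

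Inserting these into \eqref{td}, the dominating term becomes $(K^2 \alpha/4)\ln|\nabla u(y_0)|$ by Condition \ref{cond18-1}, against which we balance the curvature term $|\operatorname{Ric}^-|/\ln|\nabla u|$ and the Hessian-of-$g$ term involving $|\nabla^2 \rho^2|/r^2$. This yields an inequality of the form
\[
g(y_0)\ln|\nabla u(y_0)| \leq \frac{C}{K^2\alpha}\Bigl(|\operatorname{Ric}^-| + \frac{1}{r^2}|\nabla^2 \rho^2|\Bigr),
\]
valid outside the trivial alternatives. Using $G(x_0) \leq G(y_0)$, i.e.\ $\exp(Ku(x_0))\ln|\nabla u(x_0)| \leq g(y_0)\exp(Ku(y_0))\ln|\nabla u(y_0)|$, and absorbing $\exp(Ku(y_0)) \leq \exp(Ku(x_0))\exp(KM)$ with $M = \max_{B_r(x_0)}|u - u(x_0)|$, we obtain the stated estimate, with the remaining alternatives ($\ln|\nabla u(y_0)| \leq 4/(g(y_0)r)$ or $|\nabla u(y_0)| \leq s_0$) producing bounds of the same or better quality.

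The main obstacle is bookkeeping the several case distinctions and ensuring that the chosen $\varepsilon$, $\alpha$, $s_0$ make all auxiliary inequalities (notably \eqref{negat}, \eqref{cond15}, \eqref{tg16}, \eqref{cond17}) simultaneously consistent; once the sign conditions from Remark \ref{rob} are in hand, the remainder is algebraic manipulation and the replacement of $y_0$ by $x_0$ via monotonicity of $G$.
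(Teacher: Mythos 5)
Your proposal follows essentially the same route as the paper's proof: the auxiliary function $G=gf\ln|\nabla u|$ with $g=1-\rho^2/r^2$, $f=\exp(Ku)$, the specialization of Lemma \ref{lem2} to $F=\ln$, the three-way case split on \eqref{cond15}, \eqref{tg16}, and $|\nabla u(y_0)|\le s_0$, the use of Remark \ref{rob} to secure the sign conditions \eqref{negat} and \eqref{cond17}, the isolation of the $(K^2\alpha/4)\ln|\nabla u(y_0)|$ term via Condition \ref{cond18-1}, and the final transfer through $G(x_0)\le G(y_0)$. The argument is presented as a sketch rather than a full derivation, but the structure and all key steps match the paper's; no gap in the approach.
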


We remark that Condition \ref{cond11-1} and Condition \ref{cond18-1}, the
latter with $\varepsilon=1/4,$ are satisfied if
\[
b(s)+1=\frac{1}{1+s^{2}}.
\]
Hence, Theorems \ref{locmin} and \ref{globmin} are valid for the minimal
surface equation.

\newpage

\section{\label{et}Existence theorems for regular equations on bounded
domains}

\qquad In the course of the proof of the existence theorems we shall make
references to results of \cite{GT} by using local coordinate systems.

Let $\mathcal{M}$ be a complete Riemannian manifold. We note that in local
coordinates $\left\{  \partial/\partial x_{i},\text{ }1\leq i\leq n\right\}  $
of $T\Omega\subset T\mathcal{M}$ defined on an open set $U$ of $\mathbb{R}%
^{n}$, the functional%
\[
F\left(  u\right)  =\int_{\Omega}\phi\left(  \left\vert \nabla u\right\vert
\right)  \omega
\]
can be written as%
\[
F\left(  u\right)  =\int_{U}\phi\left(  \left\vert \nabla u\right\vert
\right)  \sqrt{g}dx
\]
where $g=\det\left(  g_{ij}\right)  ,$ $g_{ij}=\left\langle \partial/\partial
x_{i},\partial/\partial x_{j}\right\rangle ,$ $\left(  g^{ij}\right)  =\left(
g_{ij}\right)  ^{-1}.$ We may see that given $l\in\left\{  1,...,n\right\}  $
the $l-$local component of $\left(  \nabla u\right)  ^{l}$ is
\[
\left(  \nabla u\right)  ^{l}=g^{kl}\frac{\partial u}{\partial x_{k}}%
\]
where the summation is understood and so%
\begin{equation}%
\begin{array}
[c]{c}%
\left\vert \nabla u\right\vert ^{2}=g_{ij}\left(  \nabla u\right)  ^{i}\left(
\nabla u\right)  ^{j}=g_{ij}g^{ik}\frac{\partial u}{\partial x_{k}}g^{jl}%
\frac{\partial u}{\partial x_{l}}\\
\text{ }\\
=\delta_{jk}\frac{\partial u}{\partial x_{k}}g^{jl}\frac{\partial u}{\partial
x_{l}}=g^{jl}\frac{\partial u}{\partial x_{k}}\frac{\partial u}{\partial
x_{l}}.
\end{array}
\label{gradcor}%
\end{equation}

To obtain the Euler-Lagrange equations we note that, writing
\[
A\left(  \left\vert \nabla u\right\vert \right)  =\frac{a\left(  \left\vert
\nabla u\right\vert \right)  }{\left\vert \nabla u\right\vert },
\]
we have for $\varphi\in C_{0}^{\infty}\left(  U\right)  $%
\begin{align*}
\int_{U}\frac{\phi^{\prime}\left(  \left\vert \nabla u\right\vert \right)
}{\left\vert \nabla u\right\vert }\left\langle \nabla u,\nabla\varphi
\right\rangle \sqrt{g}dx  &  =\int_{U}A\left(  \left\vert \nabla u\right\vert
\right)  g^{ij}\frac{\partial u}{\partial x_{i}}\frac{\partial\varphi
}{\partial x_{j}}\sqrt{g}dx\\
&  =-\int_{U}\frac{\partial}{\partial x_{j}}\left(  \sqrt{g}g^{ij}A\left(
\left\vert \nabla u\right\vert \right)  \frac{\partial u}{\partial x_{i}%
}\right)  \varphi dx
\end{align*}
so that%
\begin{equation}%
{\displaystyle\sum\limits_{j}}
\frac{\partial}{\partial x_{j}}A^{j}\left(  x,Du\right)  =0 \label{el}%
\end{equation}
where
\[
A^{j}=%
{\displaystyle\sum\limits_{i}}
\left(  \sqrt{g}g^{ij}A\left(  \left\vert \nabla u\right\vert \right)
\frac{\partial u}{\partial x_{i}}\right)  .
\]

\subsection{The case of mild decay of the eigenvalue ratio}

We begin with the case of smooth boundary data.

\begin{theorem}
\label{exreg}Let $\Omega\subset\mathcal{M}$ be a domain of class $C^{2,\alpha
}$ for some $\alpha>0,$ $\overline{\Omega}$ compact and let $g\in C^{2,\alpha
}\left(  \overline{\Omega}\right)  .$ We write $a$ as $a(s)=sA(s)$ and assume

\begin{itemize}
\item[(i)] $A\in C^{1,\alpha}\left(  \left[  0,\infty\right)  \right)  \cap
C^{2,\alpha}\left(  \left(  0,\infty\right)  \right)  ,$%
\begin{equation}
\min_{0\leq s\leq s_{0}}\left\{  A,\text{ }1+\frac{sA^{\prime}(s)}%
{A(s)}\right\}  >0 \label{min1}%
\end{equation}
for any $s_{0}>0$

\item[(ii)] there is a non-decreasing function $\varphi$ defined on some
interval $\left[  s_{0},\infty\right)  $ such that
\[
\int_{s_{0}}^{\infty}\frac{\varphi(s)}{s^{2}}ds=\infty
\]
and%
\[
\left(  1+b^{-}(s)\right)  s^{2}\geq\varphi(s),
\]
where
\[
b(s)=\frac{sa^{\prime}(s)}{a(s)}-1=\frac{sA^{\prime}(s)}{A(s)}%
\]

\item[(iii)] there are a $C^{1}$ function $\psi$ on some interval $\left[
s_{0},\infty\right)  $ with $\psi\left(  s\right)  \rightarrow\infty$ as
$s\rightarrow\infty$ and $\beta>0$ such that%
\[
\left(  1+b(s)-\beta\left(  b^{\prime}\right)  ^{+}(s)s\right)  s^{2}\geq
\psi(s).
\]
Then the Dirichlet problem
\begin{equation}
\left\{
\begin{array}
[c]{l}%
Q\left[  u\right]  =\operatorname{div}\left(  A\left(  \left\vert \nabla
u\right\vert \right)  \nabla u\right)  =0\text{ in }\Omega,\\
u|\partial\Omega=g
\end{array}
\right.  \label{dir}%
\end{equation}
has a unique solution $u\in C^{2,\alpha}\left(  \overline{\Omega}\right)  .$
\end{itemize}
\end{theorem}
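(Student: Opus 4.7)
The plan is to apply the Leray--Schauder fixed point theorem (Theorem~\ref{ls}) to the operator $T: C^{2}(\overline{\Omega}) \to C^{2}(\overline{\Omega})$ defined in Section~\ref{MAPB}: for $u\in C^{2}(\overline{\Omega})$, set $T[u]=w$, where $w \in C^{2,\alpha}(\overline{\Omega})$ is the unique solution of $\mathcal{L}_{u}[w]=0$ in $\Omega$, $w|_{\partial\Omega}=g$. Assumption~(i) guarantees that $\mathcal{L}_{u}$ is uniformly elliptic with $C^{\alpha}$ coefficients, so $T$ is well defined (Theorem~6.14 of \cite{GT}); continuity and compactness of $T$ follow from the Schauder estimates and the compact embedding $C^{2,\alpha}\hookrightarrow C^{2}$. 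Fixed points of $T$ are precisely the classical solutions of~(\ref{dir}), and uniqueness is immediate from Proposition~\ref{QvleQu}. Thus everything reduces to a uniform $C^{2}(\overline{\Omega})$ bound for the set $V$ in~(\ref{V}).

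Let $v = \sigma T[v]$ with $\sigma\in(0,1]$; by the linearity of $\mathcal{L}_{v}$, $v$ satisfies $Q[v]=0$ in $\Omega$ with $v|_{\partial\Omega}=\sigma g$. The Comparison Principle applied to the constant solutions $\pm|g|_{C^{0}}$ yields $|v|_{C^{0}}\le |g|_{C^{0}}$. For the gradient bound I would translate $v$ to $\tilde v := v - \inf_{\Omega}v\ge 0$ (which solves the same equation) and consider the auxiliary function $G := |\nabla \tilde v|/\ln(K+\tilde v)$ with $K$ chosen as in Theorem~\ref{globplapl}; assumption~(iii) is precisely Condition~\ref{cond6}. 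If $G$ attains its maximum at an interior point of $\Omega$, Theorem~\ref{globplapl} bounds $|\nabla \tilde v|$ at that point in terms of the structural data, $|g|_{C^{0}}$ and the Ricci curvature of $\overline{\Omega}$. If the maximum of $G$ lies on $\partial\Omega$, Theorem~\ref{bgem}---applicable because assumption~(ii) is Condition~I and $|\sigma g|_{C^{2}}\le|g|_{C^{2}}$---bounds $|\nabla v|$ on $\partial\Omega$. Since $\ln(K+\tilde v)$ is pinched between fixed positive constants depending only on $K$ and $|g|_{C^{0}}$, either case yields a uniform $C^{0}$ bound on $|\nabla v|$.

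Once $|v|_{C^{1}(\overline{\Omega})}$ is controlled, the coefficients of $\mathcal{L}_{v}$ are uniformly elliptic and uniformly bounded, so Theorem~13.2 of~\cite{GT} produces $\gamma\in(0,1)$ and a uniform bound on $|v|_{C^{1,\gamma}(\overline{\Omega})}$. Consequently those coefficients are uniformly bounded in $C^{\alpha\gamma}(\overline{\Omega})$, and applying the Schauder estimate (Theorem~6.6 of~\cite{GT}) to the linear equation $\mathcal{L}_{v}[v]=0$ with boundary data $\sigma g \in C^{2,\alpha}$ yields a uniform bound on $|v|_{C^{2,\alpha\gamma}(\overline{\Omega})}$, hence on $|v|_{C^{2}}$. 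Theorem~\ref{ls} then delivers a fixed point $u \in C^{2}(\overline{\Omega})$ of $T$, and a final application of the Schauder estimate to $\mathcal{L}_{u}[u]=0$ upgrades the regularity of $u$ to $C^{2,\alpha}(\overline{\Omega})$.

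The main obstacle is the assembly of the global $C^{1}$ estimate in the second step: Theorem~\ref{bgem} controls $|\nabla v|$ only on $\partial\Omega$, while Theorem~\ref{globplapl} gives information only at an interior local maximum of one specific auxiliary function. The function $G$ is engineered so that, in whichever of $\overline{\Omega}$ it attains its maximum, one of these two results applies; the two-sided bound on $\ln(K+\tilde v)$ then converts the bound on $G$ into a bound on $|\nabla v|$. Once the $C^{1}$ estimate is established, the passage to $C^{2,\alpha}$ is a routine bootstrap from the linear theory.
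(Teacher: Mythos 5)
Your proposal is correct and follows essentially the same route as the paper: reduce to a uniform $C^{1}$ bound via Leray--Schauder, obtain the boundary gradient bound from Theorem~\ref{bgem} (assumption~(ii) $=$ Condition~I) and the interior gradient bound from Theorem~\ref{globplapl} (assumption~(iii) $=$ Condition~\ref{cond6}), then bootstrap via Theorems~13.2 and~6.6 of \cite{GT}. The paper's proof is merely a terse pointer to the machinery of Subsection~\ref{mapbs} together with those two gradient-estimate theorems; your write-up spells out the same argument in more detail, including the correct way to splice the boundary and interior estimates by tracking where the auxiliary function $G=|\nabla\tilde v|/\ln(K+\tilde v)$ achieves its maximum.
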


\begin{proof}
As explained in Subsection \ref{mapbs}, it is enough to obtain a priori
gradient bounds for a one parameter family of solutions of (\ref{dir}).
Precisely, it is enough to prove that there is a constant $C$ depending only
on $\Omega,$ $A$ and $g$ such that%
\begin{equation}
\left\vert \nabla v\right\vert \leq N \label{ub}%
\end{equation}
if $v\in C^{2}\left(  \overline{\Omega}\right)  $ satisfies $Q\left[
v\right]  =0$ and $v|\partial\Omega=\sigma g$ for some $\sigma\in\left[
0,1\right]  .$ But this is immediate from (ii) and (iii) and Theorems
\ref{bgem} and \ref{globplapl}. This proves Theorem \ref{exreg}.
\end{proof}

\begin{remark}
The assumption of $A\in C^{2,\alpha}\left(  \left(  0,\infty\right)  \right)
$ is necessary to guarantee that $u$ is of class $C^{3}$ (in the set
$\left\vert \nabla u\right\vert >s_{0}>0$ for large $s_{0})$ and to apply the
results of Section \ref{glge}.
\end{remark}

\begin{theorem}
\label{excbd}Let $\Omega$ be as in Theorem \ref{exreg} and $g\in C^{0}\left(
\partial\Omega\right)  .$ We assume Condition (i) as in Theorem \ref{exreg}
but instead of (ii) and (iii) as in this theorem we require the stronger condition%

\begin{equation}
B(s)^{-1}\left(  1+b(s)-\beta\left\vert b^{\prime}(s)\right\vert s\right)
\geq\alpha\label{besao}%
\end{equation}
with some positive constant $\alpha$ and%
\[
B(s):=\max\left\{  1,1+b(s)\right\}  ,\text{ }s>0.
\]

Then the above Dirichlet problem has a unique solution $u\in C^{2,\alpha
}\left(  \Omega\right)  \cap C^{0}\left(  \overline{\Omega}\right)  .$
Moreover, for any relatively compact subdomain $\Lambda$ of $\Omega$ there is
a $C^{2,\alpha}\left(  \Lambda\right)  $ bound depending only on $\Lambda$ and
$\sup_{\Omega}\left\vert u\right\vert .$
\end{theorem}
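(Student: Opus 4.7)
The plan is to carry out precisely the strategy outlined in Subsection \ref{mapbc}, using the local gradient estimate of Theorem \ref{locplapl} as the main new input that is made possible by the stronger hypothesis \eqref{besao}. First I would observe that \eqref{besao} is exactly Condition \ref{cond10}, and the remark after that condition states that it implies both Condition I and Condition \ref{cond6}. Together with hypothesis (i), this means that for every $g_k\in C^{2,\alpha}(\overline{\Omega})$ Theorem \ref{exreg} applies, yielding a solution $u_k\in C^{2,\alpha}(\overline{\Omega})$ of the Dirichlet problem with boundary data $g_k$.

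Next I would approximate the continuous boundary datum $g$ by a sequence $g_k\in C^{2,\alpha}(\overline{\Omega})$ with $g_k\to g$ uniformly on $\partial\Omega$ (e.g.\ by mollification after extension), and invoke the maximum principle in the form of Proposition \ref{maxp} applied to the differences $u_k-u_j$, to conclude
\[
\sup_{\overline{\Omega}}|u_k-u_j|=\sup_{\partial\Omega}|g_k-g_j|.
\]
Hence $(u_k)$ is Cauchy in $C^0(\overline{\Omega})$ and converges uniformly to some $u\in C^0(\overline{\Omega})$ with $u|_{\partial\Omega}=g$. In particular the sequence $(u_k)$ is uniformly bounded.

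The heart of the argument is to upgrade this $C^0$ convergence to $C^{2,\alpha}$ convergence on compact subsets of $\Omega$. Fix a relatively compact subdomain $\Lambda\subset\subset\Omega$ and a radius $r>0$ smaller than both $\mathrm{dist}(\Lambda,\partial\Omega)$ and the injectivity radius on $\Lambda$. For each $x_0\in\Lambda$ the geodesic ball $B_r(x_0)$ lies in $\Omega$ and is normal, so Theorem \ref{locplapl} applies to $u_k$ on $B_r(x_0)$ (after shifting by a constant so that $u_k\geq0$ on that ball, which does not alter the equation) and provides a bound on $|\nabla u_k(x_0)|$ depending only on $n$, the structural constants $\alpha,\beta,s_0$ of Condition \ref{cond10}, $r$, the geometry of $\overline{\Lambda}$ (through $\max|\nabla^2\rho^2|$ and $\max|\mathrm{Ric}^-|$) and $\max_{B_r(x_0)}u_k$, which is controlled by $\sup_\Omega|u|+1$ for large $k$. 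This gives a uniform $C^1(\Lambda)$ bound on $(u_k)$. Then Theorem 13.2 of \cite{GT}, read in local coordinates using the form \eqref{el} of the equation, yields a uniform $C^{1,\gamma}$ bound on a slightly larger neighborhood for some $\gamma>0$; this in turn makes the coefficients $a_{ij}$ of the linearization uniformly $C^{\alpha\gamma}$, so Theorem 6.6 of \cite{GT} provides the asserted uniform $C^{2,\alpha}(\Lambda)$ bound.

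Given these uniform interior estimates, a diagonal subsequence of $(u_k)$ converges in $C^2$ on every compact subset of $\Omega$ to the uniform limit $u$, so $u\in C^{2,\alpha}(\Omega)$ and $Q[u]=0$ by continuity of $Q:C^2(\Omega)\to C^0(\Omega)$. Uniqueness is immediate from the Comparison Principle (Proposition \ref{QvleQu}) applied to two such solutions with the same continuous boundary data. I expect the main obstacle to be bookkeeping rather than conceptual: one has to verify that the constants produced by Theorems \ref{locplapl}, 13.2 and 6.6 of \cite{GT} depend only on the data listed in the statement (so that the $C^{2,\alpha}(\Lambda)$ bound in fact depends only on $\Lambda$ and $\sup_\Omega|u|$), and that the hypothesis $u\geq0$ in Theorem \ref{locplapl} can be removed by adding a constant, since the equation \eqref{pde3rep} is invariant under this shift.
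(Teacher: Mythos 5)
Your proposal is correct and follows essentially the same route as the paper: observe that \eqref{besao} is Condition \ref{cond10}, which implies the hypotheses of Theorem \ref{exreg}, apply Theorem \ref{exreg} to approximating smooth boundary data, and then run the approximation scheme of Subsection \ref{mapbc} using the maximum principle (Proposition \ref{maxp}) for $C^0$ convergence and Theorem \ref{locplapl} for the interior gradient bound, followed by the standard Schauder bootstrap via Theorems 13.2 and 6.6 of \cite{GT}. The only difference is that you spell out details (choice of $r$, the shift to make $u_k\ge 0$ on a ball, the diagonal argument) that the paper leaves implicit with the phrase ``apply the technique explained at Section \ref{mapbc}.''
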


\begin{proof}
Let us first remark that condition (\ref{besao}) implies conditions (ii) and
(iii) in Theorem \ref{exreg}. Indeed, since $B(s)\geq1,$ the condition (iii)
is clearly satisfied with $\varphi(s)=\alpha s^{2}.$ Recalling that
\[
\frac{1+b}{B}=1+b^{-},
\]
it follows from (\ref{besao}) that $1+b^{-}\geq\alpha$ and hence (ii) is
fulfilled with $\varphi(s)=\alpha s^{2}.$ We therefore have Theorem
\ref{exreg} at our disposal and then, with Theorem \ref{locplapl}, it is
enough to apply the technique explained at Section \ref{mapbc} to conclude the
proof of the theorem.
\end{proof}

\subsection{The case of strong decay of the eigenvalue ratio}

\begin{theorem}
\label{exminreg}Let $\Omega$ be a bounded domain of class $C^{2,\alpha}$ in
$\mathcal{M}$ such that the mean curvature of $\partial\Omega$ with respect to
the interior normal of $\partial\Omega$ as well as of the inner parallel
hypersurfaces of $\partial\Omega$ in some neighborhood of $\partial\Omega$ is
non negative. We write $a$ as $a(s)=sA(s)$ and assume

\begin{itemize}
\item[(i)] $A\in C^{1,\alpha}\left(  \left[  0,\infty\right)  \right)  \cap
C^{2,\alpha}\left(  \left(  0,\infty\right)  \right)  ,$%
\begin{equation}
\min_{0\leq s\leq s_{0}}\left\{  A,\text{ }1+\frac{sA^{\prime}(s)}%
{A(s)}\right\}  >0 \label{mmin1}%
\end{equation}
for any $s_{0}>0$

\item[(ii)] there are positive numbers $\alpha$ and $s_{0}$ such that%
\begin{equation}
\left(  -b^{\prime}(s)s-\left(  b(s)+1\right)  \right)  s^{2}\geq\alpha,\text{
}s\geq s_{0}. \label{ccc}%
\end{equation}

\end{itemize}

Then the Dirichlet problem
\[
\left\{
\begin{array}
[c]{l}%
Q\left[  u\right]  =\operatorname{div}\left(  A\left(  \left\vert \nabla
u\right\vert \right)  \nabla u\right)  =0\text{ in }\Omega,\\
u|\partial\Omega=g
\end{array}
\right.
\]
where $g\in C^{2,\alpha}\left(  \overline{\Omega}\right)  $ has a unique
solution $u\in C^{2,\alpha}\left(  \overline{\Omega}\right)  .$
\end{theorem}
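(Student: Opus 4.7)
The plan is to proceed by exactly the Leray--Schauder scheme used to prove Theorem~\ref{exreg}, replacing the interior gradient input by its strong-decay counterpart. Define $T\colon C^{2}(\overline\Omega)\to C^{2}(\overline\Omega)$ as in Subsection~\ref{mapbs}: for $u\in C^{2}(\overline\Omega)$, let $w=T[u]$ be the unique $C^{2,\alpha}$ solution of the linear problem $\mathcal{L}_{u}[w]=0$ in $\Omega$, $w|_{\partial\Omega}=g$, where $\mathcal{L}_{u}$ is the frozen-coefficient linearization of $Q$. Hypothesis~(i) ensures that $\mathcal{L}_{u}$ is uniformly elliptic with $C^{\alpha}$ coefficients on any $C^{1}$-ball of $u$, so standard Schauder theory makes $T$ well-defined and compact. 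A fixed point of $\sigma T$ is a solution of $Q[u]=0$ with $u|_{\partial\Omega}=\sigma g$, so by Theorem~\ref{ls} it suffices to obtain a uniform $C^{1}$-bound on such solutions independent of $\sigma\in[0,1]$.

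The $C^{0}$-bound $|v|_{C^{0}}\leq\sup_{\Omega}|g|$ is immediate from Proposition~\ref{maxp} applied to constants, since constants solve $Q[v]=0$. For the boundary gradient I would invoke Theorem~\ref{bgem}: the mean-convexity assumption on $\partial\Omega$ and on the inner parallel hypersurfaces is exactly what is needed in the strong-decay case, and one verifies that hypothesis~(ii)---namely $(-b'(s)s-(b(s)+1))s^{2}\geq\alpha$---together with $b(s)+1>0$ gives Condition~II. Indeed, (ii) yields $b'<0$ and $(b+1)s^{2}$ eventually bounded below by a non-increasing $\varphi$ whose $\varphi(s)/s$ is not integrable at infinity (this can be read off the Remark~\ref{rob} computation, since $b+1$ decays no faster than $s^{-1}$ there). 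Hence Theorem~\ref{bgem} gives a bound on the normal derivative of $v$ on $\partial\Omega$ depending only on $|g|_{C^{2}(\overline\Omega)}$ and on $\Omega$.

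For the interior gradient I would apply Theorem~\ref{globmin}: hypothesis~(ii) is literally Condition~\ref{cond11-1}, so if the auxiliary function $\exp(Kv)|\nabla v|$ attains its maximum over $\overline\Omega$ at an interior point $y_{0}$, then $|\nabla v(y_{0})|\leq s_{0}$, with $K$ depending only on $\alpha$ and $\sup_{\Omega}|\operatorname{Ric}^{-}|$. Otherwise the maximum lies on $\partial\Omega$ and is controlled by the boundary estimate above together with the $C^{0}$-bound. Combining these two scenarios produces a $\sigma$-independent $C^{1}$-bound on $v$. Once the $C^{1}$-estimate is in place, Theorem~13.2 of \cite{GT} upgrades it to a $C^{1,\gamma}$-estimate, whence the coefficients of $\mathcal{L}_{v}$ have a uniform $C^{\alpha\gamma}$ norm, and Theorem~6.6 of \cite{GT} yields a uniform $C^{2,\alpha}$-bound. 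Leray--Schauder then provides a fixed point $u\in C^{2,\alpha}(\overline\Omega)$; uniqueness follows from the comparison principle, Proposition~\ref{QvleQu}.

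The main obstacle in this plan is the verification that hypothesis~(ii) is strong enough simultaneously for the barrier construction of Section~\ref{bge} (Condition~II) and for the global interior gradient estimate of Theorem~\ref{globmin} (Condition~\ref{cond11-1}). The latter matches (ii) verbatim, so the substantive check is extracting the non-increasing lower bound $\varphi$ on $(1+b^{-}(s))s^{2}$ with divergent $\int \varphi(s)/s\,ds$ from the ODE inequality in (ii); the estimate $b(s)+1\leq C/s$ carried out in Remark~\ref{rob} is the key input and makes this verification essentially algebraic. After that step, the proof is the standard Leray--Schauder closure that parallels Theorem~\ref{exreg} line by line.
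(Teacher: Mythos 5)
Your overall scheme---Leray--Schauder via Theorem~\ref{ls}, the $C^{0}$-bound from Proposition~\ref{maxp}, the boundary estimate from Theorem~\ref{bgem}, the interior estimate from Theorem~\ref{globmin}, then Theorem~13.2 and Theorem~6.6 of \cite{GT} to close, plus uniqueness from Proposition~\ref{QvleQu}---is exactly the paper's plan, and you correctly identified that the only substantive new step beyond Theorem~\ref{exreg} is to verify that hypothesis~(ii) implies Condition~II. However, your verification of that step is not correct.

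You propose to ``read off'' the lower bound on $(1+b^{-}(s))s^{2}$ from the Remark~\ref{rob} computation, asserting that ``$b+1$ decays no faster than $s^{-1}$.'' This is wrong on two counts. First, Remark~\ref{rob} integrates $-b'/(b+1)>(1+\varepsilon)/s$ (for Condition~\ref{cond18-1}, not Condition~\ref{cond11-1}) and yields the \emph{upper} bound $b(s)+1<(b(s_{0})+1)(s_{0}/s)^{1+\varepsilon}$, i.e. $b+1$ decays \emph{at least} as fast as $s^{-1-\varepsilon}$; it gives no lower bound at all, whereas Condition~II requires a lower bound on $(1+b^{-})s^{2}$. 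Second, the rate is also off: what (ii) actually yields is $(1+b(s))s^{2}\geq\alpha/2$, i.e. $b+1\geq\alpha/(2s^{2})$, not $b+1\gtrsim s^{-1}$. The correct argument is the one the paper gives. From (ii) and $1+b>0$ one extracts $b'(s)\leq-\alpha/s^{3}$ for $s\geq s_{0}$, hence $b$ is non-increasing there; since $b\geq-1$, one shows $b(s)\to-1$ (if $-b'(s)s$ stayed bounded away from $0$ then $b\to-\infty$, impossible; so along some $s_{k}\to\infty$ one has $-b'(s_{k})s_{k}\to0$, and (ii) forces $1+b(s_{k})\leq-b'(s_{k})s_{k}\to0$, then monotonicity finishes it). Integrating $b'(t)\leq-\alpha/t^{3}$ from $s$ to $\sigma$ and sending $\sigma\to\infty$ then gives $(1+b(s))s^{2}\geq\alpha/2$, so Condition~II holds with the constant function $\varphi\equiv\alpha/2$, which is non-increasing and has $\int\varphi(s)/s\,ds=\infty$. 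Without this chain of inequalities---in particular the preliminary fact $b\to-1$, which your sketch never establishes---you do not have the lower bound needed to apply Theorem~\ref{bgem} in the strong-decay case, and the proof has a genuine gap.
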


\begin{proof}
Using our Theorems \ref{bgem} and \ref{globmin} the proof of the above theorem
is completely analogous to that of Theorem \ref{exreg} once we prove that
condition (ii) implies Condition II in the introduction.

Clearly (\ref{ccc}) implies that $b^{\prime}\leq0$ for $s\geq s_{0}$ and hence
$b$ is non increasing for $s\geq s_{0}.$ If there were numbers $C>0$ and
$s_{1}>0$ such that $-b^{\prime}(s)s\geq C$ for $s\geq s_{1}$ then
$b(s)\rightarrow-\infty$ $\left(  s\rightarrow\infty\right)  ,$ a
contradiction since $b\geq-1.$ Therefore, there is a sequence $s_{k}%
\rightarrow+\infty$ such that $-b(s_{k})s_{k}\rightarrow0$ for $k\rightarrow
\infty.$ Then condition (\ref{ccc}) implies
\[
1+b(s_{k})\leq-b^{\prime}(s_{k})s_{k}\rightarrow0
\]
and so
\begin{equation}
1+b(s)\rightarrow0 \label{b0}%
\end{equation}
as $s\rightarrow\infty$ since $b$ is non increasing. Now condition (\ref{ccc})
also implies
\[
b^{\prime}(s)\leq-\frac{\alpha}{s^{3}}%
\]
for $s\geq s_{0}.$ Therefore we have for $s_{0}<s<\sigma$%
\[
b(\sigma)-b(s)\leq-\alpha\int_{s}^{\sigma}\frac{dt}{t^{3}}=\frac{\alpha}%
{2}\left(  \frac{1}{\sigma^{2}}-\frac{1}{s^{2}}\right)  .
\]
Since $b(\sigma)\rightarrow-1$ ($\sigma\rightarrow\infty)$ by (\ref{b0}) we
obtain
\[
\left(  1+b(s)\right)  s^{2}\geq\frac{\alpha}{2}>0\text{ }\left(
s>s_{0}\right)  .
\]

Since $b(s)<0$ for all sufficient large $s$ we have
\[
\left(  1+b^{-}(s)\right)  s^{2}\geq\frac{\alpha}{2}>0
\]
and so Condition II is satisfied with $\varphi(s)=\alpha/2.$
\end{proof}

\bigskip

Since the condition (\ref{intgradest}) below obviously implies condition
(\ref{ccc}), the proof of the next theorem is completely analogous to the
proof of Theorem \ref{excbd} using the local gradient estimates of Theorem
\ref{locmin}:

\begin{theorem}
\label{exmincbd}Let $\Omega$ be as in the previous theorem$.$ We assume
Condition (i) as in Theorem \ref{exminreg} but instead of (ii) as in this
theorem we require the stronger condition%
\begin{equation}
\left(  -b^{\prime}(s)s-\left(  1+\varepsilon\right)  \left(  b(s)+1\right)
\right)  s^{2}\geq\alpha\ for\;s\geq s_{0} \label{intgradest}%
\end{equation}
for some positive numbers $\varepsilon,$ $\alpha$ and $s_{0}.$ Then the
Dirichlet problem
\[
\left\{
\begin{array}
[c]{l}%
Q\left[  u\right]  =\operatorname{div}\left(  A\left(  \left\vert \nabla
u\right\vert \right)  \nabla u\right)  =0\text{ in }\Omega,\\
u|\partial\Omega=g
\end{array}
\right.
\]
where $g\in C^{0}\left(  \overline{\Omega}\right)  $ has a unique solution
$u\in C^{0}\left(  \overline{\Omega}\right)  \cap C^{2,\alpha}\left(
\Omega\right)  .$
\end{theorem}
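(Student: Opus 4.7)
The plan is to follow verbatim the template of the proof of Theorem \ref{excbd}, replacing the global gradient estimate of Theorem \ref{globplapl} by the local one of Theorem \ref{locmin}, and replacing the existence result for $C^{2,\alpha}$ boundary data (Theorem \ref{exreg}) by Theorem \ref{exminreg}. The only structural check needed is that our hypothesis (\ref{intgradest}) is strong enough to feed all the intermediate results.

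First I would verify that (\ref{intgradest}) implies condition (ii) of Theorem \ref{exminreg}, i.e. (\ref{ccc}). Since $1+b(s)\geq 0$ always holds under the minimal ellipticity assumption, we have
\[
\bigl(-b'(s)s-(b(s)+1)\bigr)s^{2} \;=\; \bigl(-b'(s)s-(1+\varepsilon)(b(s)+1)\bigr)s^{2} + \varepsilon(b(s)+1)s^{2} \;\geq\; \alpha,
\]
so Theorem \ref{exminreg} is applicable: for any $h\in C^{2,\alpha}(\overline{\Omega})$ the Dirichlet problem with boundary data $h$ admits a unique solution in $C^{2,\alpha}(\overline{\Omega})$. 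Simultaneously (\ref{intgradest}) is precisely Condition \ref{cond18-1}, so the local gradient estimate of Theorem \ref{locmin} is at our disposal.

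Next I would follow the approximation scheme of Section \ref{mapbc}. Choose $g_{k}\in C^{2,\alpha}(\overline{\Omega})$ converging uniformly on $\overline{\Omega}$ to the continuous boundary datum $g$, and let $u_{k}\in C^{2,\alpha}(\overline{\Omega})$ be the solutions produced by Theorem \ref{exminreg} with boundary data $g_{k}|_{\partial\Omega}$. The maximum principle for differences (Proposition \ref{maxp}) gives
\[
\sup_{\Omega}|u_{k}-u_{j}| \;=\; \sup_{\partial\Omega}|g_{k}-g_{j}|,
\]
so $(u_{k})$ is Cauchy in $C^{0}(\overline{\Omega})$ and converges uniformly to some $u\in C^{0}(\overline{\Omega})$ with $u|_{\partial\Omega}=g$. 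In particular $\sup_{k}\|u_{k}\|_{C^{0}(\overline{\Omega})}=:M_{0}<\infty$.

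To upgrade $u$ to a classical solution on $\Omega$, fix any relatively compact subdomain $\Lambda\Subset\Omega$ and pick $r>0$ smaller than both the distance from $\Lambda$ to $\partial\Omega$ and the injectivity radius at every point of $\Lambda$. Applying Theorem \ref{locmin} to $u_{k}-\inf_{B_{r}(x)}u_{k}$ at each $x\in\Lambda$ (to reduce to the nonnegative case; the equation is translation invariant) produces a bound
\[
\sup_{x\in\Lambda}|\nabla u_{k}(x)| \;\leq\; C_{1}\bigl(n,\alpha,\varepsilon,s_{0},r,\Lambda,M_{0},\operatorname{Ric}^{-}\bigr)
\]
independent of $k$. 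Theorem 13.2 of \cite{GT}, applied in local coordinates to (\ref{el}), then yields a uniform $C^{1,\gamma}(\Lambda)$ bound for some $\gamma>0$; this makes the coefficients of the linearised operator $\mathcal{L}_{u_{k}}$ uniformly H\"older on $\Lambda$, and Theorem 6.6 of \cite{GT} promotes this to a uniform $C^{2,\alpha\gamma}(\Lambda')$ bound on any $\Lambda'\Subset\Lambda$. Arzel\`a--Ascoli on an exhaustion of $\Omega$ by such $\Lambda'$, together with a diagonal extraction, gives a subsequence converging to $u$ in $C^{2}_{\mathrm{loc}}(\Omega)$, whence $u\in C^{2,\alpha}(\Omega)$ and $Q[u]=0$ by continuity of $Q\colon C^{2}(\Omega)\to C^{0}(\Omega)$. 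Uniqueness follows from the Comparison Principle (Proposition \ref{QvleQu}), since two solutions with the same continuous boundary data satisfy (\ref{bcp}) in both directions.

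The main delicate point is the step producing the uniform local gradient bound: one must verify that the constant $C$ in Theorem \ref{locmin} depends on $u_{k}$ only through $M_{0}$ and the geometric data of $\Omega$, not on $k$ individually. This is exactly the content of the estimate established in that theorem, and it is the only place where the strengthening from (\ref{ccc}) to (\ref{intgradest}) is essential; everything else in the argument is a straightforward transcription of the mild-decay proof.
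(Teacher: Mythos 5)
Your proof is correct and follows essentially the same route as the paper's: the paper's proof is only the remark preceding the theorem, stating that (\ref{intgradest}) implies (\ref{ccc}) and that the argument is then completely analogous to that of Theorem \ref{excbd} but with Theorem \ref{locmin} in place of Theorem \ref{locplapl}. Your write-up simply spells out the steps of Section \ref{mapbc} that the paper leaves implicit; the only cosmetic difference is that you shift $u_{k}$ to make it nonnegative, which is unnecessary since Theorem \ref{locmin}, unlike Theorem \ref{locplapl}, does not require nonnegativity and already measures the oscillation $M=\max_{B_r(x_0)}|u-u(x_0)|$.
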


\section{Existence theorems for some degenerate or singular equations on
bounded domains}

\qquad In order to prove similar existence theorem for certain singular or
degenerate equations like the $p-$Laplace equations we show how such equations
may be approximated by regular ones satisfying the conditions of Theorems
\ref{exreg} and \ref{excbd}. Precisely, we now assume that the coefficient $a$
is of the form%

\begin{equation}%
\begin{array}
[c]{l}%
a(s)=s^{p-1}A(s),\text{ }s\geq0,\\
\text{where }p>1,\text{ }A\in C^{1,\alpha}\left(  \left[  0,\infty\right)
\right)  \cap C^{2,\alpha}\left(  0,\infty\right)  ,\text{ }A(s)>0\text{ for
}s\geq0.
\end{array}
\label{pltype}%
\end{equation}

We observe that the equation (\ref{PDE1}) becomes singular for $p<2$ and
degenerate elliptic for $p>2.$ We now regularize (\ref{pltype}) in the form
\begin{equation}
a_{\kappa}(s)=\left(  \kappa+s^{2}\right)  ^{\frac{p}{2}-1}A(s)s \label{rega}%
\end{equation}
where $\kappa>0.$ We compute%
\[
a_{\kappa}^{\prime}=\left(  \kappa+s^{2}\right)  ^{\frac{p}{2}-1}A+\left(
\kappa+s^{2}\right)  ^{\frac{p}{2}-1}A^{\prime}s+\left(  \frac{p}{2}-1\right)
\left(  \kappa+s^{2}\right)  ^{\frac{p}{2}-2}2s^{2}A,
\]%
\[
1+b_{\kappa}=\frac{sa_{\kappa}^{\prime}}{a_{\kappa}}=\left(  \kappa
+s^{2}\right)  ^{1-\frac{p}{2}}A^{-1}a_{\kappa}^{\prime}=1+\frac{sA^{\prime}%
}{A}+\left(  p-2\right)  \frac{s^{2}}{\kappa+s^{2}}.
\]

Since%
\[
a^{\prime}=\left(  p-1\right)  s^{p-2}A+s^{p-1}A^{\prime}%
\]
we get%
\begin{equation}
\frac{sa^{\prime}}{a}=\frac{sa^{\prime}}{s^{p-1}A}=\left(  p-1\right)
+\frac{sA^{\prime}}{A} \label{ii}%
\end{equation}
and hence%
\begin{align*}
1+b_{\kappa}  &  =1+\frac{sa^{\prime}}{a}-\left(  p-1\right)  +\left(
p-2\right)  \frac{s^{2}}{\kappa+s^{2}}\\
&  =1+b-\left(  p-2\right)  \left(  1-\frac{s^{2}}{\kappa+s^{2}}\right)
\end{align*}
that is%
\begin{equation}
1+b_{\kappa}=1+b-\left(  p-2\right)  \frac{\kappa}{\kappa+s^{2}}. \label{iii}%
\end{equation}
It follows that%
\begin{equation}
b_{\kappa}^{\prime}=b^{\prime}+\left(  p-2\right)  \frac{2\kappa s}{\left(
\kappa+s^{2}\right)  ^{2}}. \label{iv}%
\end{equation}

Now we would like to check that the Conditions I, \ref{cond6}, \ref{cond10}
hold for the family $a_{\kappa}$ uniformly for $\kappa\in\left(  0,1\right]  $
provided that they are true for the original $a.$

We have from Condition I and (\ref{iii})%
\begin{align*}
\left(  1+b_{\kappa}^{-}\right)  s^{2}  &  \geq\left(  1+b\right)
s^{2}+\left(  p-2\right)  ^{-}\frac{\kappa s^{2}}{\kappa+s^{2}}\\
&  \geq\varphi(s)+\left(  p-2\right)  ^{-}=:\widetilde{\varphi}(s),\text{
}0\leq\kappa\leq1,
\end{align*}
where $\widetilde{\varphi}$ is non decreasing as $\varphi$ is and%
\[
\int_{1}^{\infty}\frac{\widetilde{\varphi}(s)}{s^{2}}ds=\infty
\]
by Condition I. As next we turn to the analogue of Condition \ref{cond6}. We
have%
\[
sb_{\kappa}^{\prime}=sb^{\prime}+\left(  p-2\right)  \frac{2\kappa s^{2}%
}{\left(  \kappa+s^{2}\right)  ^{2}}%
\]
so that%
\[
s\left(  b_{\kappa}^{\prime}\right)  ^{+}\leq s\left(  b^{\prime}\right)
^{+}+\left(  p-2\right)  ^{+}\frac{2\kappa s^{2}}{\left(  \kappa+s^{2}\right)
^{2}}.
\]

It follows then from Condition \ref{cond6} that%
\begin{gather*}
\left(  1+b_{\kappa}(s)-\beta\left(  b_{\kappa}^{\prime}(s)\right)
^{+}s\right)  s^{2}\\
\geq\left(  1+b(s)-\beta\left(  b_{\kappa}^{\prime}(s)\right)  ^{+}s\right)
s^{2}-\left(  p-2\right)  ^{+}\left(  \frac{\kappa s^{2}}{\kappa+s^{2}}%
+\beta\frac{2\kappa s^{4}}{\left(  \kappa+s^{2}\right)  ^{2}}\right) \\
\geq\varphi(s)-\left(  p-2\right)  ^{+}\left(  \kappa+2\beta\kappa\right)
\rightarrow\infty\text{ }\left(  s\rightarrow\infty\right)
\end{gather*}
uniformly for $\kappa\in\left(  0,1\right]  .$ Finally we come to Condition
\ref{cond10}. There holds%
\begin{align*}
1  &  \leq B_{\kappa}=\max\left\{  1,1+b_{k}\right\}  \leq\max\left\{
1,1+b\right\}  +\left(  2-p\right)  ^{+}\frac{\kappa}{\kappa+s^{2}}\\
&  \leq B\left(  1+\left(  2-p\right)  ^{+}\frac{\kappa}{\kappa+s^{2}}\right)
\leq2B
\end{align*}
since $p>1.$ It follows from Condition \ref{cond10} and (\ref{iv}) above that
\begin{gather*}
B_{\kappa}^{-1}\left(  1+b_{\kappa}(s)-\beta\left\vert b_{\kappa}^{\prime
}(s)\right\vert s\right) \\
\geq\frac{1}{2}B^{-1}\left(  1+b(s)-\beta\left\vert b^{\prime}(s)\right\vert
s\right) \\
-\left(  p-2\right)  ^{+}\frac{\kappa}{\kappa+s^{2}}+\beta\left\vert
p-2\right\vert \frac{2\kappa s^{2}}{\left(  \kappa+s^{2}\right)  ^{2}}%
\geq\frac{1}{4}\alpha
\end{gather*}
if $s\geq s_{0}(p,\beta)$ and $\kappa\in\left(  0,1\right]  .$ We thus showed
that the Conditions I, \ref{cond6} and \ref{cond10} imply the corresponding
conditions for the regularized equation, uniformly for $\kappa\in\left(
0,1\right]  .$

Finally we come to Condition \ref{cond18-1} which clearly includes Condition
\ref{cond11-1}, the latter also implying Condition II in the introduction as
was shown in the proof of Theorem \ref{exminreg}. We have%
\begin{gather*}
\left(  -sb_{\kappa}^{\prime}-\left(  1+\varepsilon\right)  \left(
1+b_{\kappa}\right)  \right)  s^{2}=\left(  -sb^{\prime}-\left(
1+\varepsilon\right)  \left(  1+b\right)  \right)  s^{2}\\
+\left(  2-p\right)  \left[  \frac{2\kappa s^{4}}{\left(  \kappa+s^{2}\right)
^{2}}-\left(  1+\varepsilon\right)  \frac{\kappa s^{2}}{\kappa+s^{2}}\right]
\\
\geq\left(  -sb^{\prime}-\left(  1+\varepsilon\right)  \left(  1+b\right)
\right)  s^{2}-\left\vert 2-p\right\vert \kappa\left(  2+1+\varepsilon\right)
\geq\frac{\alpha}{2}%
\end{gather*}
for $s\geq s_{0},$ uniformly for $\kappa\in\left(  0,\kappa_{0}\left(
p\right)  \right)  ,$ $0<\varepsilon\leq1,$ provided that Condition
\ref{cond18-1} holds.

Let us finally check for the ellipticity of the regularized equation. We
compute%
\begin{gather*}
\operatorname{div}\left(  \frac{a_{\kappa}\left(  \left\vert \nabla
u\right\vert \right)  }{\left\vert \nabla u\right\vert }\nabla u\right)  =\\
\left(  \kappa+\left\vert \nabla u\right\vert ^{2}\right)  ^{\frac{p}{2}%
-1}A\Delta u+\left(  \frac{p}{2}-1\right)  \left(  \kappa+\left\vert \nabla
u\right\vert ^{2}\right)  ^{\frac{p}{2}-2}A\left\langle \nabla\left\vert
\nabla u\right\vert ^{2},\nabla u\right\rangle \\
+\left(  \kappa+\left\vert \nabla u\right\vert ^{2}\right)  ^{\frac{p}{2}%
-1}A^{\prime}\left\langle \nabla\left\vert \nabla u\right\vert ,\nabla
u\right\rangle =\\
\left(  \kappa+\left\vert \nabla u\right\vert ^{2}\right)  ^{\frac{p}{2}%
-1}A\Delta u+\left[  \left(  p-2\right)  \left(  \kappa+\left\vert \nabla
u\right\vert ^{2}\right)  ^{\frac{p}{2}-2}A\right. \\
\left.  +\left(  \kappa+\left\vert \nabla u\right\vert ^{2}\right)  ^{\frac
{p}{2}-1}\frac{A^{\prime}}{\left\vert \nabla u\right\vert }\right]  \nabla
^{2}u\left(  \nabla u,\nabla u\right)  ,
\end{gather*}
that is%
\begin{gather*}
\operatorname{div}\left(  \frac{a_{\kappa}\left(  \left\vert \nabla
u\right\vert \right)  }{\left\vert \nabla u\right\vert }\nabla u\right)  =\\
\left(  \kappa+\left\vert \nabla u\right\vert ^{2}\right)  ^{\frac{p}{2}%
-1}A\left\{  \Delta u+\left[  \left(  p-2\right)  \frac{\left\vert \nabla
u\right\vert ^{2}}{\kappa+\left\vert \nabla u\right\vert ^{2}}+\frac
{A^{\prime}\left\vert \nabla u\right\vert }{A}\right]  \nabla^{2}u\left(
\frac{\nabla u}{\left\vert \nabla u\right\vert },\frac{\nabla u}{\left\vert
\nabla u\right\vert }\right)  \right\}  .
\end{gather*}

In order to test the ellipticity we replace $\nabla^{2}u$ by $\left\langle
\xi,\text{ . }\right\rangle \otimes\left\langle \xi,\text{ . }\right\rangle $
with a unit length vector $\xi$ and with%
\[
\eta:=\left\vert \nabla u\right\vert ^{-1}\nabla u,\text{ }\xi^{\bot}%
:=\xi-\left\langle \xi,\eta\right\rangle \eta
\]
we must consider the quadratic form%
\[
q(s,\xi)=\left(  \kappa+s^{2}\right)  ^{\frac{p}{2}-1}A(s)\left\{  \left\vert
\xi^{\bot}\right\vert ^{2}+\left[  1+\left(  p-2\right)  \frac{s^{2}}%
{\kappa+s^{2}}+\frac{sA^{\prime}(s)}{A(s)}\right]  \left\langle \xi
,\eta\right\rangle ^{2}\right\}  .
\]
Since
\[
1+\left(  p-2\right)  \frac{s^{2}}{\kappa+s^{2}}\geq\min\left\{
1,p-1\right\}
\]
we obtain

\begin{lemma}
\label{nnn}We assume that for some interval $\left[  0,s_{0}\right]  $ there
are positive numbers $c,$ $C$ such that%
\begin{align*}
c  &  \leq A(s)\leq C,\\
c  &  \leq\min\left\{  1,p-1\right\}  +\frac{sA^{\prime}(s)}{A(s)}%
\leq1+\left(  p-2\right)  ^{+}+\frac{sA^{\prime}(s)}{A(s)}\leq C,
\end{align*}
for $s\in\left[  0,s_{0}\right]  .$ Then the estimate%
\[
\left(  \kappa+s^{2}\right)  ^{\frac{p}{2}-1}c^{2}\leq q\left(  s,\xi\right)
\leq\left(  \kappa+s^{2}\right)  ^{\frac{p}{2}-1}C^{2}%
\]
holds for $s\in\left[  0,s_{0}\right]  $ and $\left\vert \xi\right\vert =1.$
\end{lemma}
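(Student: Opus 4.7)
The statement amounts to a very concrete double inequality for the quadratic form $q(s,\xi)$, and the whole content is already encoded in the two-sided bounds imposed on $A$ and $sA'/A$. My plan is to factor $q$ into an obvious positive prefactor times a quadratic form in $\xi$ with bounded coefficients, and then to use $|\xi|=1$.

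The plan is to write
\[
q(s,\xi) = (\kappa+s^{2})^{\frac{p}{2}-1}A(s)\bigl[|\xi^{\bot}|^{2} + \mu_{\kappa}(s)\langle\xi,\eta\rangle^{2}\bigr],
\qquad
\mu_{\kappa}(s):=1+(p-2)\frac{s^{2}}{\kappa+s^{2}}+\frac{sA'(s)}{A(s)},
\]
and to bound $\mu_{\kappa}$ uniformly in $\kappa>0$. Because $s^{2}/(\kappa+s^{2})\in[0,1)$, the term $(p-2)\,s^{2}/(\kappa+s^{2})$ lies in the closed interval between $0$ and $p-2$ in both possible orders, so in all cases
\[
\min\{1,p-1\}\;\le\;1+(p-2)\frac{s^{2}}{\kappa+s^{2}}\;\le\;1+(p-2)^{+}.
\]
Adding $sA'(s)/A(s)$ and applying the hypothesis, I obtain $c\le\mu_{\kappa}(s)\le C$ for all $s\in[0,s_{0}]$ and all $\kappa>0$.

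Next I would use $|\xi|^{2}=|\xi^{\bot}|^{2}+\langle\xi,\eta\rangle^{2}=1$ to conclude
\[
\min\{1,\mu_{\kappa}(s)\}\;\le\;|\xi^{\bot}|^{2}+\mu_{\kappa}(s)\langle\xi,\eta\rangle^{2}\;\le\;\max\{1,\mu_{\kappa}(s)\}.
\]
Evaluating the hypothesis at $s=0$ (where $sA'/A=0$) yields $c\le\min\{1,p-1\}\le 1\le 1+(p-2)^{+}\le C$, so $c\le 1\le C$. Together with $c\le\mu_{\kappa}(s)\le C$ and $c\le A(s)\le C$, the bracket in the factorisation of $q$ is squeezed into $[c,C]$ and the overall prefactor $A(s)$ is also in $[c,C]$. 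Multiplying gives
\[
(\kappa+s^{2})^{\frac{p}{2}-1}c^{2}\;\le\;q(s,\xi)\;\le\;(\kappa+s^{2})^{\frac{p}{2}-1}C^{2},
\]
which is the desired estimate.

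Since everything is essentially algebraic, there is no real analytic obstacle; the only point that requires a moment of care is the sign casework for $p-2$ in bounding $\mu_{\kappa}$, which is exactly what motivates the asymmetric form $\min\{1,p-1\}\le\cdots\le 1+(p-2)^{+}$ appearing in the hypothesis, and the trivial observation $c\le 1\le C$ obtained by specialising the hypothesis to $s=0$.
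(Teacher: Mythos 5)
Your proof is correct and follows exactly the argument the paper intends; the paper states the lemma as an immediate consequence of the preceding observation $\min\{1,p-1\}\le 1+(p-2)s^{2}/(\kappa+s^{2})\le 1+(p-2)^{+}$ without writing out details. Your extra step of specializing the hypothesis at $s=0$ to get $c\le 1\le C$ (needed because the bracket is a convex combination of $1$ and $\mu_{\kappa}(s)$, and the case $\langle\xi,\eta\rangle=0$ makes the bracket equal to $1$) is precisely the small point one must not omit, so it is good that you made it explicit.
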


Lemma \ref{nnn} gives the ellipticity condition for the regularized equations
and we may therefore get the following immediate consequences of Theorems
\ref{exreg} and \ref{excbd}. Since we showed above that the Condition I,
\ref{cond6} and \ref{cond10} hold uniformly for $\kappa\in\left(  0,1\right]
$ and Condition \ref{cond18-1} uniformly for $\kappa\in\left(  0,\kappa
_{0}\left(  p\right)  \right]  $ we get $C^{1}$ bounds for the solutions of
the regularized equations independent of $\kappa\in\left(  0,1\right]  $ and
$\kappa\in\left(  0,\kappa_{0}\left(  p\right)  \right]  ,$ respectively.

\begin{corollary}
[of Theorem \ref{exreg}]\label{cor24}We assume all the conditions of Theorem
\ref{exreg} but replacing (\ref{min1}) by
\begin{equation}
\min_{0\leq s\leq s_{0}}\left\{  A(s),\min\left\{  1,p-1\right\}
+\frac{sA^{\prime}(s)}{A(s)}\right\}  >0 \label{nmin1}%
\end{equation}
for any by $s_{0}>0$. Then the regularized Dirichlet problem%
\[
\left\{
\begin{array}
[c]{l}%
\operatorname{div}\left(  \frac{a_{\kappa}\left(  \left\vert \nabla
u\right\vert \right)  }{\left\vert \nabla u\right\vert }\nabla u\right)  =0,\\
u|\partial\Omega=g
\end{array}
\right.
\]
where $a_{\kappa}$ is given by (\ref{rega}), has a unique solution $u_{\kappa
}\in C^{2,\alpha}\left(  \overline{\Omega}\right)  $ for all $\kappa>0.$
Moreover, the family $u_{\kappa},$ $0<\kappa\leq1,$ is uniformly bounded in
$C^{1}\left(  \overline{\Omega}\right)  .$
\end{corollary}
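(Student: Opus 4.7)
The plan is to apply Theorem \ref{exreg} to the regularized equation for each fixed $\kappa>0$ and then observe that the a-priori $C^1$ estimates entering the proof of that theorem come with constants that are uniform in $\kappa\in(0,1]$.

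First, I would rewrite the regularized equation so that it fits the framework of Theorem \ref{exreg}. Setting $\tilde A_{\kappa}(s):=(\kappa+s^{2})^{p/2-1}A(s)$ one has $a_{\kappa}(s)=s\tilde A_{\kappa}(s)$, and $\tilde A_{\kappa}\in C^{1,\alpha}([0,\infty))\cap C^{2,\alpha}((0,\infty))$ because $A$ is, and because $\kappa>0$ makes the factor $(\kappa+s^{2})^{p/2-1}$ smooth on $[0,\infty)$. To verify hypothesis (i) of Theorem \ref{exreg}, i.e.\ that $\tilde A_\kappa(s)>0$ and $1+s\tilde A_{\kappa}'(s)/\tilde A_{\kappa}(s)>0$ on $[0,s_0]$ for every $s_0$, I would use the identity
\[
1+\frac{s\tilde A_{\kappa}'(s)}{\tilde A_{\kappa}(s)}=1+\frac{sA'(s)}{A(s)}+(p-2)\frac{s^{2}}{\kappa+s^{2}},
\]
whose right-hand side lies between $\min\{1,p-1\}+sA'/A$ and $1+(p-2)^{+}+sA'/A$; by the revised hypothesis (\ref{nmin1}) both are strictly positive, matching the quadratic-form bounds of Lemma \ref{nnn}.

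Second, I would verify hypotheses (ii) and (iii) of Theorem \ref{exreg} for the regularized equation. This is exactly what the calculations preceding Lemma \ref{nnn} accomplish: denoting by $b_{\kappa}$ the function $b$ associated with $a_{\kappa}$, formulas (\ref{iii}) and (\ref{iv}) give
\[
(1+b_{\kappa}^{-}(s))s^{2}\ge\varphi(s)+(p-2)^{-}=:\tilde\varphi(s)
\]
with $\tilde\varphi$ still non-decreasing and still satisfying $\int^{\infty}\tilde\varphi(s)/s^{2}\,ds=\infty$, and
\[
\bigl(1+b_{\kappa}(s)-\beta(b_{\kappa}')^{+}(s)s\bigr)s^{2}\ge\psi(s)-(p-2)^{+}(1+2\beta),
\]
whose right-hand side still tends to $\infty$ as $s\to\infty$. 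Both bounds are independent of $\kappa\in(0,1]$. Applying Theorem \ref{exreg} to the regularized equation then yields the unique solution $u_{\kappa}\in C^{2,\alpha}(\overline{\Omega})$.

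Finally, for the uniform $C^{1}$ bound I would trace through the a-priori estimates used in Theorem \ref{exreg}. The boundary gradient estimate from Theorem \ref{bgem} depends only on $|g|_{C^{2}}$, on $\Omega$, and on the function $\varphi$ from Condition~I; the interior gradient estimate from Theorem \ref{globplapl} depends only on the quantities appearing in Condition \ref{cond6} and on $\sup_{\Omega}(|u|+\operatorname{Ric}^{-})$. The comparison principle gives $\sup_{\Omega}|u_{\kappa}|\le\sup_{\partial\Omega}|g|$ independently of $\kappa$, and the structural constants just produced are uniform in $\kappa\in(0,1]$. Consequently $\|u_{\kappa}\|_{C^{1}(\overline{\Omega})}$ is bounded by a constant depending only on $\Omega$, $g$, $A$, and $p$. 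The main (mild) obstacle is bookkeeping: making sure that, when $p<2$, the extra $(p-2)$-terms in (\ref{iii})--(\ref{iv}) are absorbed without destroying the monotonicity/growth properties required in Conditions (ii) and (iii); the estimates above handle this because $\kappa/(\kappa+s^{2})\le 1$ gives a uniform bound on the perturbation.
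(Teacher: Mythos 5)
Your proposal is correct and follows essentially the same route the paper takes: the corollary is, in the paper, a one-line remark following the block of computations on $a_{\kappa}$ (the identities (\ref{iii})--(\ref{iv}), the uniform verification of Conditions~I, \ref{cond6}, \ref{cond10} for $\kappa\in(0,1]$, and the ellipticity Lemma \ref{nnn}), and you have simply reconstructed that implicit argument explicitly --- writing $a_{\kappa}(s)=s\tilde A_{\kappa}(s)$ with $\tilde A_{\kappa}(s)=(\kappa+s^{2})^{p/2-1}A(s)$, checking (i) of Theorem \ref{exreg} via the identity $1+s\tilde A_{\kappa}'/\tilde A_{\kappa}=1+sA'/A+(p-2)s^{2}/(\kappa+s^{2})$ together with (\ref{nmin1}), checking (ii)--(iii) via the paper's $\kappa$-uniform estimates, and then noting that the $C^{1}$ bounds produced by Theorems \ref{bgem} and \ref{globplapl} depend only on $\kappa$-independent data. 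This matches the paper's intent precisely.
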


\begin{corollary}
[of Theorem \ref{excbd}]\label{cor23}We assume all the conditions of Theorem
\ref{excbd} but replacing (\ref{min1}) by (\ref{nmin1}). Then the regularized
Dirichlet problem%
\[
\left\{
\begin{array}
[c]{l}%
\operatorname{div}\left(  \frac{a_{\kappa}\left(  \left\vert \nabla
u\right\vert \right)  }{\left\vert \nabla u\right\vert }\nabla u\right)  =0,\\
u|\partial\Omega=g
\end{array}
\right.
\]
where $a_{\kappa}$ is given by (\ref{rega}), has a unique solution $u_{\kappa
}\in C^{2,\alpha}\left(  \Omega\right)  \cap C^{0}\left(  \overline{\Omega
}\right)  ,$ for all $\kappa>0.$ Moreover, the family $u_{\kappa},$
$0<\kappa\leq1,$ has a uniform $C^{1}$ bound on each compact subset of
$\Omega.$
\end{corollary}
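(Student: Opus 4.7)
The plan is to apply Theorem \ref{excbd} directly to the regularized operator $Q_\kappa[u] := \operatorname{div}(a_\kappa(|\nabla u|)|\nabla u|^{-1}\nabla u)$ for each fixed $\kappa > 0$ to produce the solution $u_\kappa$, and then extract a uniform local $C^1$ bound from Theorem \ref{locplapl} by exploiting the fact, already established in the paragraphs preceding the corollary, that Conditions I, \ref{cond6} and \ref{cond10} transfer from $a$ to $a_\kappa$ with constants independent of $\kappa \in (0,1]$.

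For the existence step I would verify that $a_\kappa$ satisfies the hypotheses of Theorem \ref{excbd}. Writing $a_\kappa(s) = sA_\kappa(s)$ with $A_\kappa(s) = (\kappa+s^2)^{p/2-1}A(s)$, the required regularity $A_\kappa \in C^{1,\alpha}([0,\infty))\cap C^{2,\alpha}((0,\infty))$ follows from the smoothness of $s\mapsto(\kappa+s^2)^{p/2-1}$ on $[0,\infty)$ for $\kappa>0$. The ellipticity analogue of (\ref{min1}) on each interval $[0,s_0]$ is exactly what Lemma \ref{nnn} provides, given that the strengthened lower bound (\ref{nmin1}) holds for $A$. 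Finally, the structural hypothesis (\ref{besao}) for $b_\kappa$ has been verified in the text preceding the corollary, and the bound there is uniform in $\kappa\in(0,1]$ with constants depending only on $p$, $\alpha$ and $\beta$ from the hypotheses on $a$. Theorem \ref{excbd} therefore produces, for every $\kappa>0$, a unique $u_\kappa \in C^{2,\alpha}(\Omega)\cap C^0(\overline\Omega)$ solving the regularized Dirichlet problem.

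For the uniform $C^1$ bound, fix a compact set $K\subset\Omega$ and cover $K$ by finitely many closed geodesic balls $\overline{B_{r}(x_j)}\subset\Omega$ whose radii $r>0$ are smaller than the distance of $x_j$ to its cut locus; $r$ may be chosen uniformly from $K,\Omega,\mathcal{M}$ alone. Proposition \ref{maxp} gives $\sup_\Omega |u_\kappa| \le \sup_{\partial\Omega}|g|$ independently of $\kappa$. Applying Theorem \ref{locplapl} to $u_\kappa$ on each $B_r(x_j)$, the bound on $|\nabla u_\kappa(x_j)|$ depends only on $n$, $r$, $\max_{B_r(x_j)}|\nabla^2\rho^2|$, $\max_{B_r(x_j)}|\operatorname{Ric}^-|$, the $C^0$ bound $M$, and the constants $\alpha,\beta,s_0$ from Condition \ref{cond10}. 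All geometric quantities depend only on $K,\Omega,\mathcal{M}$, while the structural constants for $a_\kappa$ are the uniform-in-$\kappa$ constants derived in the preceding computations. Repeating the argument with an arbitrary $x\in K$ in place of $x_j$ (a ball of the same uniform radius $r$ sits inside $\Omega$ and is normal) yields $\sup_K |\nabla u_\kappa| \le C(K)$ with $C(K)$ independent of $\kappa\in(0,1]$.

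The proof is essentially book-keeping: the only non-trivial issue is to confirm that every constant entering the estimate of Theorem \ref{locplapl} when applied to $a_\kappa$ remains controlled as $\kappa\to 0^+$. This is exactly the content of the uniform-in-$\kappa$ verification of Conditions I, \ref{cond6} and \ref{cond10} carried out just before the corollary, so no additional analytic work is required beyond assembling those observations.
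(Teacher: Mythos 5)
Your proof is correct and follows essentially the same route as the paper, which treats this corollary as an immediate consequence of Theorem \ref{excbd} (applied to each regularized $a_\kappa$, with ellipticity supplied by Lemma \ref{nnn}) together with the uniform-in-$\kappa$ verification of Condition \ref{cond10} and the local gradient estimate of Theorem \ref{locplapl}. The only step you leave implicit is the harmless normalization needed to apply Theorem \ref{locplapl} (which assumes $u\ge 0$): one replaces $u_\kappa$ by $u_\kappa+\sup_{\partial\Omega}|g|$, which is nonnegative by Proposition \ref{maxp}, solves the same equation, has the same gradient, and enters the estimate only through a $C^0$ bound that remains uniform in $\kappa$.
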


\begin{corollary}
[of Theorem \ref{exminreg}]\label{cormmi1}We assume all the conditions of
Theorem \ref{exminreg} but with $a(s)=s^{p-1}A(s)$ and replacing (\ref{mmin1})
by (\ref{nmin1}). Then the regularized Dirichlet problem%
\[
\left\{
\begin{array}
[c]{l}%
\operatorname{div}\frac{a_{\kappa}\left(  \left\vert \nabla u\right\vert
\right)  }{\left\vert \nabla u\right\vert }\nabla u=0\text{ in }\Omega\\
u|\partial\Omega=g
\end{array}
\right.
\]
has a unique solution $u\in C^{2,\alpha}\left(  \overline{\Omega}\right)  $
for all $\kappa\in\left(  0,\kappa_{0}(p)\right)  $ and for all $g\in
C^{2,\alpha}\left(  \overline{\Omega}\right)  .$ Moreover, there is a
$C^{1}\left(  \overline{\Omega}\right)  $ bound for $u$ independent of
$\kappa.$
\end{corollary}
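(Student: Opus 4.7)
\textbf{Proof plan for Corollary \ref{cormmi1}.} The plan is to treat this as a direct application of Theorem \ref{exminreg} to the regularized coefficient $a_{\kappa}$ defined in (\ref{rega}), once one has verified that the hypotheses of that theorem hold for $a_{\kappa}$ with constants uniform in $\kappa$. The work of verifying the uniform structural conditions has essentially been done in the computation immediately preceding the corollary; what remains is to assemble the pieces.

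First I would check the hypotheses of Theorem \ref{exminreg} for $a_{\kappa}(s) = s A_{\kappa}(s)$ with $A_{\kappa}(s) = (\kappa + s^{2})^{p/2 - 1} A(s)$. Since $\kappa > 0$, the prefactor $(\kappa + s^{2})^{p/2 - 1}$ is smooth on $[0,\infty)$, so $A_{\kappa}$ inherits $C^{1,\alpha}([0,\infty)) \cap C^{2,\alpha}((0,\infty))$ regularity from $A$. Ellipticity (the analogue of (\ref{mmin1}) for $A_{\kappa}$) is exactly what Lemma \ref{nnn} furnishes from hypothesis (\ref{nmin1}). Condition (ii) of Theorem \ref{exminreg}, i.e.\ (\ref{ccc}) applied to $b_{\kappa}$, is contained in the uniform inequality
\[
\bigl(-s b_{\kappa}'(s) - (1+\varepsilon)(1 + b_{\kappa}(s))\bigr) s^{2} \geq \tfrac{\alpha}{2}, \qquad s \geq s_{0},
\]
derived just before the corollary for all $\kappa \in (0, \kappa_{0}(p)]$ and $0 < \varepsilon \leq 1$; since $1 + b_{\kappa} \geq 0$, dropping $\varepsilon$ only strengthens the inequality, so (\ref{ccc}) holds uniformly in $\kappa$ with constant $\alpha/2$. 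Theorem \ref{exminreg} then delivers the unique $u_{\kappa} \in C^{2,\alpha}(\overline{\Omega})$.

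For the uniform $C^{1}(\overline{\Omega})$ bound I would combine three estimates already established. The $C^{0}$ bound $\sup_{\Omega} |u_{\kappa}| \leq \sup_{\Omega} |g|$ is Proposition \ref{maxp}. The boundary gradient estimate of Theorem \ref{bgem} applies because the mean convexity of $\partial\Omega$ and its parallel hypersurfaces is part of the hypotheses of Theorem \ref{exminreg}, and its constant depends only on $|g|_{C^{2}}$ and on the structural function $\varphi$ in Condition II, which is $\kappa$-uniform. The interior estimate of Theorem \ref{globmin} depends only on the constant $\alpha$ in Condition \ref{cond11-1} (implied, as noted above, by the uniform (ii)) and on $\sup_{\Omega} |\operatorname*{Ric}\nolimits^{-}|$, again $\kappa$-independent. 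Together these give a $C^{1}(\overline{\Omega})$ bound independent of $\kappa \in (0, \kappa_{0}(p))$.

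The delicate point is to be sure that every quantity entering Theorems \ref{bgem} and \ref{globmin} actually depends only on the $\kappa$-uniform structural data, and not on auxiliary bounds of $a_{\kappa}$ that might degenerate as $\kappa \to 0$ (for instance when $p < 2$, the behavior of $b_{\kappa}'$ near $s = 0$ is $\kappa$-dependent). Tracing through the proofs shows this is so: the barrier in Section \ref{bge} is built from an ODE whose coefficients involve only $\varphi$, and the auxiliary-function argument for the minimal surface case picks $K$ solely from $\alpha$ and $\sup |\operatorname*{Ric}\nolimits^{-}|$. Hence no obstruction arises and the argument closes.
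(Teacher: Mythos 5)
Your overall plan is correct and matches the paper's implicit argument: verify the uniform structural conditions for $a_\kappa$ (using the computations right before the corollary and Lemma \ref{nnn}), invoke Theorem \ref{exminreg} for existence, and assemble the $\kappa$-uniform $C^1$ bound from Proposition \ref{maxp}, Theorem \ref{bgem}, and Theorem \ref{globmin}.

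One small but genuine logical slip: you invoke the paper's $\kappa$-uniform inequality
\[
\bigl(-s b_{\kappa}'(s) - (1+\varepsilon)(1 + b_{\kappa}(s))\bigr) s^{2} \geq \tfrac{\alpha}{2}
\]
and then drop $\varepsilon$ to deduce (\ref{ccc}) for $a_\kappa$. But that inequality was derived by the paper \emph{under} Condition \ref{cond18-1} for the original $a$ (this is stated explicitly: ``provided that Condition \ref{cond18-1} holds''). Corollary \ref{cormmi1} assumes only condition (ii) of Theorem \ref{exminreg}, i.e.\ Condition \ref{cond11-1}, which is the $\varepsilon = 0$ version and is strictly weaker. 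So you are reasoning from a premise that the corollary's hypotheses do not guarantee. The repair is immediate: the algebraic identity
\[
\bigl(-s b_\kappa' - (1+b_\kappa)\bigr)s^2 = \bigl(-sb' - (1+b)\bigr)s^2 + (2-p)\left[\frac{2\kappa s^4}{(\kappa+s^2)^2} - \frac{\kappa s^2}{\kappa+s^2}\right]
\]
and the bound $|2-p|\cdot 3\kappa$ on the error term hold for $\varepsilon = 0$ just as for $\varepsilon > 0$, so (\ref{ccc}) for $a$ yields (\ref{ccc}) for $a_\kappa$ with constant $\alpha/2$ once $\kappa \le \kappa_0(p)$. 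You should cite that computation with $\varepsilon = 0$ directly rather than going through the $\varepsilon > 0$ version of the inequality and then discarding $\varepsilon$. (This imprecision is, to be fair, present in the paper's own prose, which only explicitly carries out the $\varepsilon > 0$ case.) The rest of the proposal — regularity of $A_\kappa$, ellipticity from Lemma \ref{nnn}, and tracing the $\kappa$-uniformity through Theorems \ref{bgem} and \ref{globmin} — is correct.
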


\begin{corollary}
[of Theorem \ref{exmincbd}]\label{cormin2}We assume all the conditions of
Theorem \ref{exmincbd} but with $a(s)=s^{p-1}A(s)$ and replacing (\ref{mmin1})
by (\ref{nmin1}). Then the regularized Dirichlet problem%
\[
\left\{
\begin{array}
[c]{l}%
\operatorname{div}\frac{a_{\kappa}\left(  \left\vert \nabla u\right\vert
\right)  }{\left\vert \nabla u\right\vert }\nabla u=0\text{ in }\Omega\\
u|\partial\Omega=g
\end{array}
\right.
\]
has a unique solution $u\in C^{2,\alpha}\left(  \Omega\right)  \cap
C^{0}\left(  \overline{\Omega}\right)  $ for all $\kappa\in\left(
0,\kappa_{0}(p)\right)  $ and for all $g\in C^{0}\left(  \overline{\Omega
}\right)  .$ Moreover, the family $u_{\kappa},$ $\kappa\in\left(  0,\kappa
_{0}(p)\right)  ,$ admits a uniform $C^{1}$ bound on each relatively compact
open subset $\Lambda$ of $\Omega.$
\end{corollary}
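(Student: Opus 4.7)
The plan is to follow the strategy of Theorem \ref{exmincbd} applied to each regularized integrand $a_\kappa(s) = (\kappa+s^{2})^{p/2-1}A(s)\,s$, and then collect a $\kappa$-independent local $C^{1}$ bound from Theorem \ref{locmin}. First I would verify that $a_\kappa$ meets the hypotheses of Theorem \ref{exmincbd} for every $\kappa \in (0,\kappa_{0}(p))$: the regularity $A_\kappa(s) := (\kappa+s^{2})^{p/2-1} A(s)$ lies in $C^{1,\alpha}([0,\infty)) \cap C^{2,\alpha}((0,\infty))$, the positivity and ellipticity on $[0,s_{0}]$ are furnished by condition (\ref{nmin1}) together with Lemma \ref{nnn} (note that for $\kappa>0$ the factor $(\kappa+s^{2})^{p/2-1}$ is positive and smooth down to $s=0$, so $a_\kappa$ is non-degenerate), and the strong-decay condition (\ref{intgradest}) for $b_\kappa$ was established in the discussion preceding this corollary, uniformly for $\kappa \in (0,\kappa_{0}(p)]$. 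Hence Theorem \ref{exmincbd} produces a unique $u_\kappa \in C^{2,\alpha}(\Omega) \cap C^{0}(\overline{\Omega})$ solving the regularized Dirichlet problem for the given $g \in C^{0}(\overline{\Omega})$; uniqueness itself is provided by the comparison principle of Proposition \ref{QvleQu}, which applies because $a_\kappa$ is strictly increasing with $a_\kappa(0)=0$.

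Next I would extract the uniform local gradient estimate. The maximum principle (Proposition \ref{maxp}) gives $\|u_\kappa\|_{C^{0}(\overline{\Omega})} \le \sup_{\partial\Omega}|g|$ independently of $\kappa$. Fix any relatively compact $\Lambda \Subset \Omega$, cover $\overline{\Lambda}$ by finitely many closed geodesic balls $B_{r}(x_{j}) \subset \Omega$ whose radii are smaller than the distance from $x_{j}$ to its cut locus, and apply Theorem \ref{locmin} to $u_\kappa$ on each such ball. Because the constants $\varepsilon$, $\alpha$, $s_{0}$ in Condition \ref{cond18-1} were shown above to hold for $b_\kappa$ uniformly in $\kappa \in (0,\kappa_{0}(p))$, the constants $C$ and $K$ emerging from Theorem \ref{locmin} can be chosen independently of $\kappa$, and the $C^{0}$-bound on $u_\kappa$ makes $M = \max_{B_{r}(x_{j})}|u_\kappa - u_\kappa(x_{j})|$ uniformly bounded. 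This yields
\[
\sup_{\Lambda} |\nabla u_\kappa| \le C(\Lambda,g), \qquad \kappa \in (0,\kappa_{0}(p)),
\]
with $C(\Lambda,g)$ independent of $\kappa$, which is the claimed uniform $C^{1}$ bound.

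The real content is not the derivation of the corollary from Theorem \ref{exmincbd} — that is essentially an invocation — but rather the uniform-in-$\kappa$ nature of the structural conditions, and that part has already been carried out in the regularization analysis preceding the statement. Thus the main obstacle is bookkeeping: one must be careful that the positivity threshold in (\ref{nmin1}) together with the factor $(\kappa+s^{2})^{p/2-1}$ gives uniform ellipticity on each compact range $s \in [0,s_{0}]$ independently of small $\kappa$ (Lemma \ref{nnn} is precisely set up for this), and that the strong-decay condition is preserved with constants independent of $\kappa$ on the range $s \ge s_{0}$. Once these uniformities are in hand the proof is complete.
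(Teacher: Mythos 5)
Your argument is correct and matches the paper's (implicit) reasoning: the paper presents this corollary as an immediate consequence, having just verified that Condition~\ref{cond18-1} carries over to $b_\kappa$ uniformly in $\kappa\in(0,\kappa_0(p))$ and that Lemma~\ref{nnn} together with~(\ref{nmin1}) supplies the ellipticity of the regularized operator. Your steps — verify the hypotheses of Theorem~\ref{exmincbd} for $a_\kappa$, invoke it for existence and uniqueness, obtain a $\kappa$-independent $C^0$ bound from the maximum principle, and then apply Theorem~\ref{locmin} with the uniform structural constants to get the uniform local $C^1$ bound — spell out exactly the "bookkeeping" the paper leaves to the reader.
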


We are now able to obtain existence theorems when the coefficient $a$ behaves
as $a(s)=s^{p-1}A(s)$ (for details see (\ref{pltype})). As an intermediate
step we first demonstrate how from the results obtained so far one may easily
derive the existence of Lipschitz continuous weak solutions in the degenerate
or singular case. A much deeper analysis is required if one wants to show the
optimal $C^{1,\beta}$ regularity of solutions. A complete proof of the latter
would go far beyond the intentions of these notes and we shall therefore
confine ourselves with a reduction to known results in the literature.

\begin{theorem}
We assume either the conditions of Theorem \ref{exreg} or Theorem \ref{excbd},
but replacing (\ref{min1}) and (\ref{mmin1}) by (\ref{nmin1}). Then the
Dirichlet problem%
\[
\left\{
\begin{array}
[c]{l}%
\operatorname{div}\left(  \frac{a\left(  \left\vert \nabla u\right\vert
\right)  }{\left\vert \nabla u\right\vert }\nabla u\right)  =0,\\
u|\partial\Omega=g
\end{array}
\right.
\]
where $a(s)=s^{p-1}A(s),$ $p>1,$ has a unique weak solution in $C^{0,1}\left(
\overline{\Omega}\right)  .$
\end{theorem}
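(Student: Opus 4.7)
The plan is to construct the weak solution as a limit of the smooth solutions $u_\kappa$ of the regularized Dirichlet problems provided by Corollaries \ref{cor24} and \ref{cor23}. In either case those corollaries give a family $u_\kappa \in C^{2,\alpha}$ with $u_\kappa|_{\partial\Omega}=g$ together with a uniform $C^{1}$ bound, globally on $\overline{\Omega}$ in the smooth-boundary-data case and locally in each relatively compact subdomain $\Lambda\Subset\Omega$ in the continuous-boundary-data case. First I would fix a sequence $\kappa_j\downarrow 0$ and, by Arzel\`a--Ascoli applied to the equicontinuous, equibounded family $\{u_{\kappa_j}\}$, extract a subsequence that converges uniformly on $\overline{\Omega}$ (resp.\ on each $\Lambda$) to a function $u$; the uniform $C^0$ convergence together with the a-priori estimate $|u_\kappa|_{C^0}\le \sup|g|$ of (\ref{c0apri}) implies $u\in C^{0,1}(\overline{\Omega})$ (resp.\ $u\in C^{0,1}_{\mathrm{loc}}(\Omega)\cap C^0(\overline{\Omega})$) with $u|_{\partial\Omega}=g$, and moreover $\nabla u_{\kappa_j}\rightharpoonup \nabla u$ in the weak-$\ast$ sense of $L^\infty$.

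The next step is to pass to the limit in the weak formulation. Setting $T_\kappa(\xi)=a_\kappa(|\xi|)|\xi|^{-1}\xi$ and $T(\xi)=a(|\xi|)|\xi|^{-1}\xi$, the uniform gradient bound makes $\{T_{\kappa_j}(\nabla u_{\kappa_j})\}$ uniformly bounded in $L^\infty$, so after extracting a further subsequence $T_{\kappa_j}(\nabla u_{\kappa_j})\rightharpoonup V$ weakly-$\ast$ in $L^\infty$, and the identity $\int_\Omega\langle T_{\kappa_j}(\nabla u_{\kappa_j}),\nabla\xi\rangle=0$ passes to $\int_\Omega\langle V,\nabla\xi\rangle=0$ for every $\xi\in C^{0,1}_0(\Omega)$. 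The heart of the argument is to identify $V=T(\nabla u)$ a.e., and this is where I expect the main difficulty.

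For this identification I would use the Minty monotonicity trick, relying on exactly the same pointwise inequality that underlies Proposition \ref{QvleQu}: the strict monotonicity of $s\mapsto a_\kappa(s)$ together with Cauchy--Schwarz gives
\[
\langle T_\kappa(\xi)-T_\kappa(\eta),\xi-\eta\rangle\ge\bigl(a_\kappa(|\xi|)-a_\kappa(|\eta|)\bigr)\bigl(|\xi|-|\eta|\bigr)\ge 0
\]
for all $\xi,\eta$. Testing the weak equation for $u_{\kappa_j}$ with $u_{\kappa_j}-w$ for a smooth $w$ agreeing with $g$ on $\partial\Omega$ eliminates the boundary contribution, so that, for an arbitrary bounded measurable vector field $\eta$, monotonicity yields
\[
\int_\Omega\langle T_{\kappa_j}(\nabla u_{\kappa_j}),\nabla w-\eta\rangle-\int_\Omega\langle T_{\kappa_j}(\eta),\nabla u_{\kappa_j}-\eta\rangle\ge 0.
\]
Letting $j\to\infty$, using the pointwise convergence $T_{\kappa_j}(\eta)\to T(\eta)$ on $\{\eta\ne0\}$ together with the uniform $L^\infty$ bound and dominated convergence, one obtains $\int_\Omega\langle V,\nabla w-\eta\rangle-\int_\Omega\langle T(\eta),\nabla u-\eta\rangle\ge 0$. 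Choosing $\eta=\nabla u-t\nabla\phi$ with $\phi\in C^{0,1}_0(\Omega)$ and $t\to 0^\pm$ (a hemicontinuity argument) forces $\int_\Omega\langle V-T(\nabla u),\nabla\phi\rangle=0$, and then a standard localization (test $\eta=\nabla u+t\psi$ with $\psi$ a constant vector in a small chart and let $t\to 0$) upgrades this to $V=T(\nabla u)$ almost everywhere. The continuous-boundary-data case proceeds the same way locally, after which the identity $\int_\Omega\langle T(\nabla u),\nabla\xi\rangle=0$ follows for all $\xi\in C^{0,1}_0(\Omega)$ by exhaustion of $\Omega$ by compact subsets.

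Finally, uniqueness is immediate from Proposition \ref{QvleQu}, since that comparison principle was proved only assuming that $a$ is strictly increasing with $a(0)=0$, which is the present hypothesis; two weak solutions with the same boundary values on $\partial\Omega$ coincide. The most delicate point is precisely the identification $V=T(\nabla u)$: Minty's trick handles the nonlinearity, but one must take care with the singular/degenerate set $\{\nabla u=0\}$, where $T$ is only continuous for $p\ge 2$ and is singular for $1<p<2$; the uniform boundedness of $T_{\kappa_j}(\nabla u_{\kappa_j})$ and of $\nabla u_{\kappa_j}$ in $L^\infty$ is what makes the dominated-convergence step of the Minty argument valid in both regimes.
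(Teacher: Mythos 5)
Your proof is correct in substance but takes a genuinely different route from the paper's. The paper exploits the variational structure: the regularized solution $u_{\kappa_i}$ minimizes $F_{\kappa_i}(v)=\int_{\Omega}\phi_{\kappa_i}(\lvert\nabla v\rvert)$ over competitors with boundary value $g$, and the chain $F(u)\le\liminf F(u_{\kappa_i})\le\liminf F_{\kappa_i}(u_{\kappa_i})\le\liminf F_{\kappa_i}(v)=F(v)$, which rests on the comparison $\phi_{\kappa_i}\ge\phi$ and on weak lower semicontinuity of the convex integrand (cf.\ \cite{M}), exhibits $u$ as the minimizer of $F$, hence a weak solution of its Euler--Lagrange equation. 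You instead pass to the limit directly in the weak Euler--Lagrange identity via Minty's monotonicity trick, bypassing the variational characterization entirely. Both routes are sound. The paper's is shorter and leans on the gradient structure of the operator; yours is more robust, since monotonicity plus hemicontinuity would still apply to a monotone operator that is not of gradient type, and it sidesteps any need to compare $\phi_{\kappa}$ with $\phi$. Two remarks. First, $T(\xi)=a(\lvert\xi\rvert)\lvert\xi\rvert^{-1}\xi$ satisfies $\lvert T(\xi)\rvert=\lvert\xi\rvert^{p-1}A(\lvert\xi\rvert)\to0$ as $\xi\to0$, so $T$ is actually continuous at the origin for \emph{all} $p>1$ (only its derivative is singular for $p<2$); your final caveat about singularity at $\nabla u=0$ for $1<p<2$ overstates the difficulty, and the hemicontinuity step is unproblematic throughout the range $p>1$. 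Second, the reference to Theorem \ref{excbd} in the statement appears to be a slip for Theorem \ref{exminreg}: the paper's proof cites Corollaries \ref{cor24} and \ref{cormmi1} (both with $C^{2,\alpha}$ data and global $C^1$ bounds), which is consistent with the asserted global regularity $u\in C^{0,1}(\overline{\Omega})$. Your second branch, which invokes Corollary \ref{cor23} and argues locally with an exhaustion, therefore treats a different case than the one intended; under the intended hypotheses both branches yield global $C^1$ bounds and your Minty argument applies verbatim in each.
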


\begin{proof}
The solution is obtained as a limit of a sequence $\left(  u_{\kappa_{i}%
}\right)  ,$ $\kappa_{i}>0,$ $\kappa_{i}\rightarrow0$ $\left(  i\rightarrow
\infty\right)  ,$ where $u_{\kappa_{i}}$ are solutions of the regularized
Dirichlet problems according to Corollary \ref{cor24} or Corollary
\ref{cormmi1}, respectively. By these corollaries, the families $\left(
u_{\kappa}\right)  $ are uniformly bounded in $C^{1}\left(  \overline{\Omega
}\right)  $ and hence, in particular, there is a uniform $L^{2}\left(
\overline{\Omega}\right)  $ bound for $\nabla u_{\kappa_{i}}$. We may
therefore find a function $u\in C^{0,1}\left(  \overline{\Omega}\right)  $
such that $u_{\kappa_{i}}$ converges (up to a subsequence) uniformly to $u$ in
$\overline{\Omega}$ and $\nabla u_{\kappa_{i}}$ converges weakly to $\nabla u$
in $L^{2}\left(  \Omega\right)  .$ The statement of the theorem follows
immediately if we can show that $u$ minimizes the functional%
\[
F(v)=\int_{\Omega}\phi\left(  \left\vert \nabla v\right\vert \right)  dV
\]
subject to the boundary condition $v|\partial\Omega=g,$ i.e. that $F(u)\leq
F(v)$ for any $v\in C^{1}\left(  \overline{\Omega}\right)  $ with
$v|\partial\Omega=g.$

We clearly have
\[
F_{\kappa}\left(  u_{\kappa}\right)  \leq F_{\kappa}\left(  v\right)
\]
for all such $v$ where%
\[
F_{\kappa}\left(  v\right)  =\int_{\Omega}\phi_{\kappa}\left(  \left\vert
\nabla v\right\vert \right)  dw,\text{ \ }\phi_{\kappa}\left(  s\right)
=\int_{0}^{s}a_{\kappa}\left(  t\right)  dt\geq\phi\left(  s\right)  .
\]
By a well known result in the Calculus of Variations, using the convexity of
$\phi$ \cite{M} we get for arbitrary $v\in C^{1}\left(  \overline{\Omega
}\right)  ,$ $v|\partial\Omega=g$:%
\[
F(u)\leq\lim\inf F\left(  u_{\kappa_{i}}\right)  \leq\lim\inf F_{\kappa_{i}%
}\left(  u_{\kappa_{i}}\right)  \leq\lim\inf F_{\kappa_{i}}\left(  v\right)
=F(v).
\]

\end{proof}

As announced above we shall now show using a result of Lieberman how one may
deduce the $C^{1,\beta}$ regularity in the singular or degenerate case. As a
consequence of this, the set where the gradient of the solution does not
vanish is open and hence the solution is of class $C^{2}$ on this set.

\begin{theorem}
\label{thh}We assume all the conditions of Theorem \ref{exreg} but with
$a(s)=s^{p-1}A(s),$ $p>1,$ and replacing (\ref{min1}) by (\ref{nmin1}). Then
there is $\beta>0$ such that the Dirichlet problem%
\begin{equation}
\left\{
\begin{array}
[c]{l}%
\operatorname{div}\left(  \frac{a\left(  \left\vert \nabla u\right\vert
\right)  }{\left\vert \nabla u\right\vert }\nabla u\right)  =0,\\
u|\partial\Omega=g
\end{array}
\right.  \label{pdelib}%
\end{equation}
has a unique weak solution $u\in C^{1,\beta}\left(  \overline{\Omega}\right)
$.
\end{theorem}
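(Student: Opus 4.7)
The plan is to obtain $u$ as the limit of the regularized solutions supplied by Corollary \ref{cor24} and upgrade the regularity by invoking Lieberman's $C^{1,\beta}$ theorem \cite{L} uniformly in the regularization parameter. Uniqueness will come from the comparison principle, Proposition \ref{QvleQu}.

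First, by Corollary \ref{cor24} (whose hypotheses are exactly what we assume here, once (\ref{min1}) is replaced by (\ref{nmin1})) the regularized Dirichlet problem
\[
\operatorname{div}\!\left(\frac{a_{\kappa}(|\nabla u|)}{|\nabla u|}\nabla u\right)=0\quad\text{in }\Omega,\qquad u|_{\partial\Omega}=g,
\]
with $a_{\kappa}$ given by (\ref{rega}), has a unique solution $u_{\kappa}\in C^{2,\alpha}(\overline{\Omega})$ for every $\kappa\in(0,1]$, and the family $\{u_{\kappa}\}_{0<\kappa\le 1}$ is uniformly bounded in $C^{1}(\overline{\Omega})$ by some constant $N$ depending only on $\Omega$, $A$, $p$, and $\|g\|_{C^{2,\alpha}}$.

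Second, I would verify that the regularized operators fall into the structural class treated by Lieberman in \cite{L}. In a local coordinate chart the equation has the form (\ref{el}) with coefficients
\[
A^{j}_{\kappa}(x,Du)=\sqrt{g}\,g^{ij}\,\bigl(\kappa+|\nabla u|^{2}\bigr)^{\frac{p}{2}-1}A(|\nabla u|)\,\tfrac{\partial u}{\partial x_{i}},
\]
and Lemma \ref{nnn} together with the computation of $1+b_{\kappa}$ in (\ref{iii}) shows that for $s$ in any bounded interval the quadratic form $q(s,\xi)$ satisfies two-sided bounds of natural $p$-growth with constants \emph{independent of} $\kappa\in(0,1]$. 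Since $|\nabla u_{\kappa}|\le N$ uniformly, Lieberman's $C^{1,\beta}$ regularity theorem applies on $\overline{\Omega}$, and its constants depend only on the structural data, on $N$, and on $\|g\|_{C^{1,\alpha}}$. Hence there exist $\beta\in(0,\alpha]$ and $C>0$, both independent of $\kappa$, such that
\[
\|u_{\kappa}\|_{C^{1,\beta}(\overline{\Omega})}\le C.
\]

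Third, by Arzelà--Ascoli I can extract a sequence $\kappa_{i}\downarrow 0$ with $u_{\kappa_{i}}\to u$ in $C^{1,\beta'}(\overline{\Omega})$ for every $\beta'<\beta$; in particular $\nabla u_{\kappa_{i}}\to\nabla u$ uniformly. Since $a_{\kappa}(s)/s=(\kappa+s^{2})^{\frac{p}{2}-1}A(s)\to s^{p-2}A(s)=a(s)/s$ uniformly on compact subsets of $[0,\infty)$ (pointwise at $s=0$ is harmless because of the uniform convergence $\nabla u_{\kappa_{i}}\to\nabla u$ and the continuity of $s\mapsto a(s)/s\cdot s = a(s)$), I may pass to the limit in the weak formulation
\[
\int_{\Omega}\left\langle \frac{a_{\kappa_{i}}(|\nabla u_{\kappa_{i}}|)}{|\nabla u_{\kappa_{i}}|}\nabla u_{\kappa_{i}},\nabla\xi\right\rangle dV=0,\qquad \xi\in C_{0}^{0,1}(\Omega),
\]
to conclude that $u$ is a weak solution of (\ref{pdelib}) with $u|_{\partial\Omega}=g$ and $u\in C^{1,\beta}(\overline{\Omega})$ (lower semicontinuity of the $C^{1,\beta}$ seminorm under uniform convergence yields the regularity of the limit with the same exponent $\beta$). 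Uniqueness follows from Proposition \ref{QvleQu}, which applies since $a$ is strictly increasing with $a(0)=0$.

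The main obstacle is the second step: invoking Lieberman's result with constants uniform in $\kappa$. One must check that the natural structure conditions underlying \cite{L} (growth of $a_{\kappa}$, of $a_{\kappa}'$, and ellipticity ratios) hold with bounds independent of $\kappa\in(0,1]$ on the range $0\le s\le N$. This is exactly the content of Lemma \ref{nnn} at small $s$, combined with the explicit formulas (\ref{iii})--(\ref{iv}) for $1+b_{\kappa}$ and $b_{\kappa}'$, which show that the perturbation terms are bounded by $|p-2|$ uniformly in $\kappa$; the remaining verifications are routine but must be spelled out to match the hypotheses of \cite{L} in coordinates.
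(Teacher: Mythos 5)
Your overall strategy coincides with the paper's: regularize via Corollary \ref{cor24}, obtain a uniform $C^{1,\beta}$ bound from Lieberman's theorem, extract a convergent subsequence, and pass to the limit in the weak formulation, with uniqueness from the comparison principle. However, you have identified but not resolved the crux, and the paper supplies a specific device that you omit. To invoke Theorem 1 of \cite{L} you must verify structure conditions on the coefficients $A^{j}_{\kappa}(x,Du)$ for \emph{all} gradient values, not merely on the range $0\le s\le N$ actually attained by the solutions; the mere fact that $|\nabla u_{\kappa}|\le N$ does not by itself put the equation into Lieberman's class, because those hypotheses are stated globally in the gradient variable. The paper's fix is to truncate: replace $A$ by $\widetilde{A}(s)=A(s)$ for $s\le s_{0}$ and $\widetilde{A}(s)=A(s_{0})$ for $s>s_{0}$, where $s_{0}$ dominates the uniform $C^{1}$ bound. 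The solutions $u_{\kappa}$ are unchanged (they satisfy the modified equation identically), but the modified coefficients now have exact $p$-growth at infinity and the required bounds hold globally and uniformly in $\kappa$. Without this step, the claim ``Lieberman's theorem applies'' is not justified.

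Two further verifications that the paper carries out and you defer as ``routine'' are in fact nontrivial and should be spelled out. First, Lieberman's hypotheses are phrased in terms of $(\sqrt{\kappa}+|Du|)^{p-2}$ rather than $(\kappa+|Du|^{2})^{\frac{p}{2}-1}$; the paper observes that the ratio $(\kappa+s^{2})/(\sqrt{\kappa}+s)^{2}$ is bounded above and below uniformly in $\kappa\in(0,1]$ and $s\ge0$, so the two normalizations are interchangeable. Second, one must check H\"older continuity of $x\mapsto\widetilde{A}^{j}(x,Du)$, which in the Riemannian setting enters through $\sqrt{g}\,g^{ij}$ and through $|\nabla u|$ (which depends on $x$ via the metric even for fixed $Du$); the paper actually proves a Lipschitz bound $|\partial\widetilde{A}^{j}/\partial x_{k}|\le\widetilde{C}(1+|Du|)^{p-1}$, and the truncation is again used here to control $\widetilde{A}'$ (which vanishes for $s>s_{0}$). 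Your passage to the limit in the weak formulation is a valid alternative to the paper's variational argument and is fine as stated, but the Lieberman step needs the truncation and the two checks above to be a complete proof.
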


\begin{proof}
We shall obtain the solution of (\ref{pdelib}) as limit of solutions
$u_{\kappa_{i}}$ of the corresponding regularized equations \ when $\kappa
_{i}\rightarrow0$ as $i\rightarrow\infty.$ The existence of $u_{\kappa_{i}}$
and the uniform $C^{1}\left(  \overline{\Omega}\right)  $ bound is guaranteed
by Corollary \ref{cor24}. We shall show that Theorem 1 of \cite{L} may be
applied to obtain a uniform $C^{1,\beta}\left(  \overline{\Omega}\right)  $
bound for the sequence $u_{\kappa_{i}}.$ From this the statement of the
theorem is an immediate consequence. In order to meet the assumptions of
Lieberman we must modify our equation as follows. We replace the coefficient
$A$ by
\[
\widetilde{A}(s)=\left\{
\begin{array}
[c]{c}%
A(s)\text{ if }s\leq s_{0}\\
A(s_{0})\text{ if }s>s_{0}%
\end{array}
\right.
\]
where $s_{0}$ is an upper bound for the $C^{1}\left(  \overline{\Omega
}\right)  $ norm of our solution $u_{\kappa}$ which is guaranteed by Corollary
\ref{cor24}. It is then obvious that $u_{\kappa}$ will be a solution of the
modified equation
\[
\operatorname{div}\left(  \left(  \kappa+\left\vert \nabla u_{\kappa
}\right\vert ^{2}\right)  ^{\frac{p}{2}-1}\widetilde{A}\left(  \left\vert
\nabla u_{\kappa}\right\vert \right)  \nabla u_{\kappa}\right)  =0,
\]
in local coordinates (see (\ref{el}))
\[
\sum\frac{\partial}{\partial x_{j}}\widetilde{A}^{j}\left(  x,Du\right)  =0
\]
with%
\[
\widetilde{A}^{j}\left(  x,Du\right)  =\sum\sqrt{g}g^{ij}\left(
\kappa+\left\vert \nabla u\right\vert ^{2}\right)  ^{\frac{p}{2}-1}%
\widetilde{A}\left(  \left\vert \nabla u\right\vert \right)  \frac{\partial
u}{\partial x_{i}}.
\]
Let us remark that for a function $u$ on the manifold $\mathcal{M}$ the norm
of its gradient $\nabla u$ on $\mathcal{M}$ is equivalent to the Euclidean
norm of its Euclidean gradient $Du.$ Moreover the quotient%
\[
\frac{\kappa+s^{2}}{\left(  \sqrt{\kappa}+s\right)  ^{2}}%
\]
being bounded by positive constants from above and from below independent of
$0<\kappa\leq1$ and $s\geq0$ the term $\left(  \kappa+\left\vert \nabla
u\right\vert ^{2}\right)  ^{\frac{p}{2}-1}$ may be replaced in all estimates
by $\left(  \sqrt{\kappa}+\left\vert Du\right\vert \right)  ^{p-2}.$
Therefore, by Lemma \ref{nnn}, the ellipticity required in Lieberman's theorem
is satisfied. It remains to check the Holder estimates of the coefficients
$\widetilde{A}^{j}\left(  x,Du\right)  $ with respect to the coordinates
$x=\left(  x_{1},...,x_{n}\right)  $. We even show a Lipschitz condition by
computing a bound for the partial derivatives $\partial\widetilde{A}%
^{j}/\partial x_{k}.$

Denoting by $C$ a generic constant which only depends on the metric tensor
$g_{ij}$ and it first derivatives we get%
\[
\frac{\partial\left\vert \nabla u\right\vert ^{2}}{\partial x_{k}}\leq
C\left\vert Du\right\vert ^{2},\text{ }\frac{\partial\left\vert \nabla
u\right\vert }{\partial x_{k}}\leq C\left\vert Du\right\vert
\]
and hence%
\begin{align*}
\left\vert \frac{\partial\widetilde{A}^{j}}{\partial x_{k}}\left(
x,Du\right)  \right\vert  &  \leq C\left\{  \left(  \kappa+\left\vert \nabla
u\right\vert ^{2}\right)  ^{\frac{p}{2}-1}\widetilde{A}\left(  \left\vert
\nabla u\right\vert \right)  \left\vert Du\right\vert \right. \\
&  +\left(  \kappa+\left\vert \nabla u\right\vert ^{2}\right)  ^{\frac{p}%
{2}-2}\left\vert Du\right\vert ^{2}\widetilde{A}\left(  \left\vert \nabla
u\right\vert \right)  \left\vert Du\right\vert \\
&  \left.  +\left(  \kappa+\left\vert \nabla u\right\vert ^{2}\right)
^{\frac{p}{2}-1}\widetilde{A}^{\prime}\left(  \left\vert \nabla u\right\vert
\right)  \left\vert Du\right\vert ^{2}\right\}
\end{align*}
Setting
\[
K:=\max_{0\leq s\leq s_{0}}\left\{  A(s),A^{\prime}(s)\right\}  =\max_{0\leq
s\leq s_{0}}\left\{  \widetilde{A}(s),\widetilde{A}^{\prime}(s)\right\}
\]
and observing that $\widetilde{A}^{\prime}(s)=0$ for $s>s_{0}$ we obtain the
estimate
\begin{align*}
\left\vert \frac{\partial\widetilde{A}^{j}}{\partial x_{k}}\left(
x,Du\right)  \right\vert  &  \leq CK\left\{  \left(  1+\left\vert
Du\right\vert \right)  ^{p-1}+\left(  1+s_{0}^{2}\right)  ^{\frac{p}{2}%
-1}s_{0}^{2}\right\} \\
&  \leq\widetilde{C}\left(  1+\left\vert Du\right\vert \right)  ^{p-1}%
\end{align*}
with a constant $\widetilde{C}$ also depending on $s_{0}.$ This completes the proof.
\end{proof}

On the basis of Corollary \ref{cor23} the proof of the following theorem
proceeds along the same lines as that of the previous theorem by replacing
$\Omega$ by a relatively compact subdomain $\Lambda$ of $\Omega$ and using a
local version of the $C^{1,\beta}$ estimates in Lieberman's theorem. We note
that though such a local version is not explicitly stated in Lieberman's
result, it is obviously a necessary ingredient to get global estimates. Indeed
the interior estimates can be obtained by the same methods as the estimates at
the boundary.

\begin{theorem}
\label{thth}We assume all the conditions of Theorem \ref{excbd} but replacing
(\ref{min1}) by (\ref{nmin1}). Then there is a unique weak solution of the
Dirichlet problem%
\[
\left\{
\begin{array}
[c]{l}%
\operatorname{div}\left(  \frac{a\left(  \left\vert \nabla u\right\vert
\right)  }{\left\vert \nabla u\right\vert }\nabla u\right)  =0,\\
u|\partial\Omega=g
\end{array}
\right.
\]
which, on each subdomain $\Lambda$ of $\Omega$ with $\overline{\Lambda}%
\subset\Omega$, belongs to $C^{1,\beta}\left(  \overline{\Lambda}\right)  $
for some $\beta$ possibly depending on $\Lambda$. Moreover, for each such
relatively compact subdomain $\Lambda$ there is a $C^{1,\beta}\left(
\overline{\Lambda}\right)  $ bound for $u$ depending only on $\Lambda$ and
$\sup_{\Omega}\left\vert u\right\vert .$
\end{theorem}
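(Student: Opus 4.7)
Following the hint preceding the theorem, the strategy is to replay the argument of Theorem \ref{thh}, but with only local control near $\partial\Omega$. First I would invoke Corollary \ref{cor23} to obtain, for each $\kappa\in(0,1]$, a solution $u_\kappa \in C^{2,\alpha}(\Omega)\cap C^0(\overline{\Omega})$ of the regularized Dirichlet problem with $u_\kappa|_{\partial\Omega}=g$. The comparison principle (Proposition \ref{QvleQu}) yields a uniform $C^0(\overline{\Omega})$ bound, and Corollary \ref{cor23} itself supplies a uniform $C^1$ bound on each compact subset of $\Omega$.

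Next, fix a relatively compact $\Lambda\subset\subset\Omega$ and an intermediate domain $\Lambda\subset\subset\Lambda'\subset\subset\Omega$. Using the $C^1$ bound on $\overline{\Lambda'}$, I would truncate $A$ above a sufficiently large $s_0$ (exactly as in the proof of Theorem \ref{thh}), so that on $\Lambda'$ each $u_\kappa$ solves $\sum_j\partial_j\widetilde{A}^j(x,Du_\kappa)=0$ with coefficients satisfying the ellipticity condition of Lemma \ref{nnn} and a Lipschitz dependence on $x$ of the type bounded in the proof of Theorem \ref{thh}, uniformly in $\kappa\in(0,1]$. The interior version of Lieberman's $C^{1,\beta}$ estimate (Theorem 1 of \cite{L}; the proof is local, so it delivers an estimate on $\overline{\Lambda}$ in terms of the $C^0$ norm on $\Lambda'$) then produces an exponent $\beta>0$ and a constant $C$, both independent of $\kappa$, with $\|u_\kappa\|_{C^{1,\beta}(\overline{\Lambda})}\leq C$.

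Choosing an exhaustion $\Lambda_j\uparrow\Omega$, Arzelà-Ascoli and a diagonal extraction produce $\kappa_i\to 0$ with $u_{\kappa_i}\to u$ in $C^1_{\mathrm{loc}}(\Omega)$, and $u\in C^{1,\beta_j}(\overline{\Lambda_j})$ for each $j$. Passage to the limit in the weak formulation of the regularized equation, using that $a_\kappa(s)/s\to a(s)/s$ locally uniformly and remains uniformly bounded on any region where $|\nabla u_{\kappa_i}|$ is bounded, shows that $u$ is a weak solution of the limit equation. Uniqueness of the weak solution with given boundary data is then immediate from Proposition \ref{QvleQu}.

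The principal difficulty, and the place where this proof genuinely departs from that of Theorem \ref{thh}, is securing the boundary condition $u|_{\partial\Omega}=g$: the local $C^{1,\beta}$ estimates give no control at $\partial\Omega$. I would handle this by a secondary approximation of the boundary data: pick $g_\ell\in C^{2,\alpha}(\overline{\Omega})$ with $g_\ell\to g$ uniformly on $\partial\Omega$, apply Theorem \ref{thh} to obtain $u^\ell\in C^{1,\beta_\ell}(\overline{\Omega})$ with $u^\ell|_{\partial\Omega}=g_\ell$, and invoke the maximum principle (Proposition \ref{maxp}) for the limit equation to conclude
\[
\sup_{\overline{\Omega}}|u^\ell-u^{\ell'}|\leq\sup_{\partial\Omega}|g_\ell-g_{\ell'}|.
\]
Thus $(u^\ell)$ is Cauchy in $C^0(\overline{\Omega})$ and converges uniformly to some $u\in C^0(\overline{\Omega})$ with $u|_{\partial\Omega}=g$; reapplying the local Lieberman estimate to the $u^\ell$ on each $\Lambda\subset\subset\Omega$ yields the claimed $C^{1,\beta}(\overline{\Lambda})$ regularity and bound. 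The hard step to watch is the uniformity of the Lipschitz bound on the coefficients $\widetilde{A}^j$ in $\kappa$ (or $\ell$), for it controls both the exponent $\beta$ and the constant $C$ in Lieberman's estimate; this uniformity was already established in the computation at the end of the proof of Theorem \ref{thh} and transfers verbatim to the local setting.
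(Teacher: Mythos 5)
Your proposal is correct, and you have spotted and repaired a genuine imprecision in the paper's argument. The paper's proof of Theorem \ref{thth} is only a sketch: invoke Corollary \ref{cor23} for the regularized solutions $u_\kappa$ with boundary data $g$, apply an interior version of Lieberman's $C^{1,\beta}$ estimate uniformly in $\kappa$, and pass to a $C^1_{\mathrm{loc}}$ limit. What the sketch does not explain is why that limit still attains the boundary value $g$ continuously, since the interior $C^{1,\beta}$ estimates degenerate at $\partial\Omega$ and one would need some uniform-in-$\kappa$ modulus of continuity up to the boundary. You correctly isolate this as the real difficulty. Your remedy — approximate $g$ by $g_\ell\in C^{2,\alpha}(\overline{\Omega})$, use the already-established Theorem \ref{thh} (valid under the present hypotheses since (\ref{besao}) implies conditions (ii), (iii) of Theorem \ref{exreg}) to produce global $C^{1,\beta}$ solutions $u^\ell$ with $u^\ell|_{\partial\Omega}=g_\ell$, and then use the maximum principle for the degenerate operator itself (Proposition \ref{maxp}, applicable because (\ref{nmin1}) with $p>1$ forces $a$ strictly increasing and $a(0)=0$) to make $(u^\ell)$ Cauchy in $C^0(\overline{\Omega})$ — is precisely the same two-scale argument the paper used to pass from Theorem \ref{exreg} to Theorem \ref{excbd}, applied one level up. In effect you swap the order of the two limits $\kappa\to0$ and $g_\ell\to g$: taking $\kappa\to0$ first (inside Theorem \ref{thh}) lets the maximum principle for the limit equation deliver boundary continuity for free, whereas the paper's order would require a separate equicontinuity-at-the-boundary argument.

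Two small remarks for tightening. First, once the secondary approximation is in place, the opening paragraphs of your proof (extracting $u_{\kappa_i}\to u$ in $C^1_{\mathrm{loc}}$ from Corollary \ref{cor23} and passing to the limit in the weak formulation) become redundant and can be dropped; the sequence $u^\ell$ supplies the solution directly. Second, to justify ``reapplying the local Lieberman estimate to the $u^\ell$'' with constants independent of $\ell$, you should say explicitly that the interior $C^1$ bound for $u^\ell$ on a compact $\Lambda\subset\subset\Omega$ comes from Theorem \ref{locplapl} applied to the regularized approximants $u^\ell_\kappa$ and depends only on $\sup_\Omega|u^\ell_\kappa|\le\sup_{\partial\Omega}|g_\ell|$, which is uniformly bounded in $\ell$; this is what lets the truncated coefficient $\widetilde A$ of Theorem \ref{thh} be chosen with an $\ell$-independent cutoff $s_0$ on $\Lambda$, yielding the claimed dependence of the final $C^{1,\beta}(\overline\Lambda)$ bound only on $\Lambda$ and $\sup_\Omega|u|$.
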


\begin{theorem}
\label{singmin1}We assume all the conditions of Theorem \ref{excbd} but with
$a(s)=s^{p-1}A(s),$ $p>1,$ and replacing (\ref{mmin1}) by (\ref{nmin1}). Then
there is a unique weak solution of the Dirichlet problem%
\[
\left\{
\begin{array}
[c]{l}%
\operatorname{div}\left(  \frac{a\left(  \left\vert \nabla u\right\vert
\right)  }{\left\vert \nabla u\right\vert }\nabla u\right)  =0,\\
u|\partial\Omega=g
\end{array}
\right.
\]
which belongs to $C^{1,\beta}\left(  \overline{\Omega}\right)  .$
\end{theorem}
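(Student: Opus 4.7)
The plan is to mirror the proof of Theorem \ref{thh} in the strong-decay regime. For each $\kappa \in (0, \kappa_{0}(p)]$, Corollary \ref{cormmi1} yields a classical solution $u_{\kappa} \in C^{2,\alpha}(\overline{\Omega})$ of the regularized Dirichlet problem with coefficient $a_{\kappa}(s) = (\kappa + s^{2})^{p/2-1} A(s) s$, together with a $C^{1}(\overline{\Omega})$ bound, say $s_{0}$, independent of $\kappa$. Picking a sequence $\kappa_{i} \searrow 0$ I produce the candidate family $u_{\kappa_{i}}$.

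The key step is to promote the uniform $C^{1}$ bound to a uniform $C^{1,\beta}(\overline{\Omega})$ bound by applying Theorem~1 of \cite{L}. I would use the same truncation device as in Theorem \ref{thh}: set $\widetilde{A}(s) = A(s)$ for $s \le s_{0}$ and $\widetilde{A}(s) = A(s_{0})$ otherwise. Since $|\nabla u_{\kappa}| \le s_{0}$ uniformly, each $u_{\kappa}$ also satisfies the truncated equation. The required ellipticity in Lieberman's normalization follows from Lemma \ref{nnn}, using that the weight $(\kappa + s^{2})^{p/2-1}$ is equivalent, uniformly in $\kappa \in (0,1]$, to $(\sqrt{\kappa} + s)^{p-2}$, which is exactly the degeneration \cite{L} is designed to accommodate. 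The Hölder (in fact Lipschitz) regularity of the coefficients $\widetilde{A}^{j}(x, Du)$ in the $x$-variable is verified by the direct computation that concludes Theorem \ref{thh}: the truncation eliminates the $\widetilde{A}'$ contribution for $|\nabla u| > s_{0}$, and the remaining terms are controlled by the uniform $C^{1}$ bound and the smoothness of the Riemannian metric, with constants independent of $\kappa$.

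Once uniform $C^{1,\beta}(\overline{\Omega})$ bounds are in hand, Arzelà--Ascoli extracts a subsequence converging in $C^{1}(\overline{\Omega})$ to some $u \in C^{1,\beta}(\overline{\Omega})$ with $u|_{\partial\Omega} = g$. Dominated convergence, justified by the uniform $C^{1}$ bound together with the pointwise convergence $a_{\kappa_{i}}(s)/s \to a(s)/s$, permits passage to the limit in
\[
\int_{\Omega} \left\langle \frac{a_{\kappa_{i}}(|\nabla u_{\kappa_{i}}|)}{|\nabla u_{\kappa_{i}}|} \nabla u_{\kappa_{i}}, \nabla \xi \right\rangle\,dV = 0,
\]
and identifies $u$ as a weak solution of the original equation. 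Uniqueness is immediate from the Comparison Principle (Proposition \ref{QvleQu}), since the strict monotonicity of $s \mapsto a(s)$ is preserved.

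The main obstacle is the uniform-in-$\kappa$ verification of Lieberman's structural hypotheses down to $\kappa = 0$. The ellipticity degenerates only through the $(\sqrt{\kappa} + |Du|)^{p-2}$ factor, which is precisely the form \cite{L} handles, so no $\kappa$-uniformity is lost there. The delicate bookkeeping is in the Lipschitz bound for $\widetilde{A}^{j}(x, Du)$ in $x$: one must confirm that the generic constant depends only on $s_{0}$, on the values of $A$ and $A'$ on $[0, s_{0}]$, and on the metric tensor and its first derivatives, and is therefore independent of $\kappa$. This is essentially the content of the final estimate in the proof of Theorem \ref{thh}, and it carries over verbatim because the strong-decay hypothesis was used only to obtain the uniform $C^{1}$ bound and plays no role in the Lieberman reduction.
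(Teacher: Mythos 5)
Your proof follows the paper's approach exactly: the paper's own proof is a one-liner stating that, on the basis of Corollary \ref{cormmi1}, the argument is completely analogous to that of Theorem \ref{thh}, and you have simply unfolded what ``completely analogous'' means --- regularize, invoke Corollary \ref{cormmi1} for the $\kappa$-uniform $C^{1}(\overline{\Omega})$ bound, truncate $A$ at the $C^{1}$ bound, verify Lieberman's ellipticity via Lemma \ref{nnn} and the Lipschitz-in-$x$ coefficient estimate, extract a $C^{1}$-convergent subsequence from the uniform $C^{1,\beta}$ bound, and pass to the limit. Your closing observation that the strong-decay hypothesis enters only through the $C^{1}$ bound and is invisible to the Lieberman reduction is precisely the reason the paper can cite the proof of Theorem \ref{thh} verbatim.
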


\begin{proof}
On the basis of Corollary \ref{cormmi1} the proof is completely analogous to
that of Theorem \ref{thh}.
\end{proof}

\begin{theorem}
\label{singmin2}We assume all the conditions of Theorem \ref{exminreg} but
with $a(s)=s^{p-1}A(s),$ $p>1,$ and replacing (\ref{mmin1}) by (\ref{nmin1}).
Then the Dirichlet problem%
\[
\left\{
\begin{array}
[c]{l}%
\operatorname{div}\left(  \frac{a\left(  \left\vert \nabla u\right\vert
\right)  }{\left\vert \nabla u\right\vert }\nabla u\right)  =0,\\
u|\partial\Omega=g
\end{array}
\right.
\]
has a unique weak solution $u\in C^{0}\left(  \overline{\Omega}\right)  \cap
C^{1}\left(  \Omega\right)  $ for any $g\in C^{0}\left(  \overline{\Omega
}\right)  .$ Moreover, for each relatively compact subdomain $\Lambda$ of
$\Omega$ the solution $u$ belongs to $C^{1,\beta}\left(  \overline{\Lambda
}\right)  $ for some $\beta>0$ and there is an a-priori bound for the
$C^{1,\beta}$ norm of $u$ depending only on $\sup_{\Omega}\left\vert
u\right\vert ,$ $\Lambda$ and $\Omega.$
\end{theorem}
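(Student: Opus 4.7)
The proof combines the approximation of continuous data by smooth data described in Section \ref{mapbc} with the limit argument of Theorem \ref{singmin1} for the passage from regular to degenerate equations. The plan is to build $u$ from solutions with smooth boundary data, control it globally in $C^0$ by the maximum principle, and locally in $C^{1,\beta}$ by the strong-decay local gradient estimate together with Lieberman's theorem.

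First, choose a sequence $g_m \in C^{2,\alpha}(\overline\Omega)$ converging uniformly to $g$ on $\overline\Omega$. By Theorem \ref{singmin1} each Dirichlet problem with boundary data $g_m$ admits a unique weak solution $u_m \in C^{1,\beta}(\overline\Omega)$. The maximum principle (Proposition \ref{maxp}) gives $\sup_\Omega |u_m - u_{m'}| = \sup_{\partial\Omega}|g_m - g_{m'}|$, so $(u_m)$ is Cauchy in $C^0(\overline\Omega)$ and converges uniformly to some $u \in C^0(\overline\Omega)$ with $u|_{\partial\Omega} = g$.

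Next, I would establish uniform interior $C^{1,\beta}$ estimates. Fix a relatively compact subdomain $\Lambda \Subset \Omega$ and $r>0$ smaller than both $\operatorname{dist}(\Lambda,\partial\Omega)$ and the injectivity radius at points of $\Lambda$. Each $u_m$ is, by construction in the proof of Theorem \ref{singmin1}, a $C^1$ limit of regularized solutions $u_{m,\kappa_i}$ supplied by Corollary \ref{cormmi1}. Applying the local gradient estimate of Theorem \ref{locmin} to the regularized equation, whose hypotheses (in particular Condition \ref{cond18-1}) hold uniformly in $\kappa \in (0,\kappa_0(p)]$ as verified in Section \ref{et}, yields a $\kappa$-independent bound on $|\nabla u_{m,\kappa_i}|$ on a neighbourhood of $\Lambda$ in terms of $r$, $\sup_\Omega |u_m|$ and the local geometry. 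Passing to the limit $\kappa_i \to 0$ gives a bound on $|\nabla u_m|$ on $\Lambda$ depending only on $\Lambda$, $r$, and $\sup_\Omega |u_m|$; since $\sup_\Omega |u_m| \le \sup_\Omega |g_m|$ is controlled uniformly in $m$, this bound is in fact uniform in $m$. The coefficient-truncation device used in the proof of Theorem \ref{thh}, applied on a slightly larger subdomain $\Lambda'$ with $\overline\Lambda \subset \Lambda' \Subset \Omega$, then brings the equation satisfied by $u_m$ into the class to which the interior form of Lieberman's $C^{1,\beta}$ estimate applies (Lemma \ref{nnn} supplies the required ellipticity), producing a uniform $C^{1,\beta}(\overline\Lambda)$ bound for $(u_m)$ depending only on $\sup_\Omega |u|$, $\Lambda$, and $\Omega$.

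Arzelà--Ascoli together with a diagonal argument over an exhaustion of $\Omega$ by relatively compact subdomains then extracts a subsequence of $(u_m)$ converging in $C^1_{\text{loc}}(\Omega)$ to some $v \in C^1(\Omega)$, and the previously established uniform $C^0$ convergence forces $v=u$ in $\Omega$. Passing to the limit in the weak identity for $u_m$ is legitimate because of the $C^1_{\text{loc}}$ convergence and the fact that the flux $s \mapsto a(s)/s\cdot \eta$ extends continuously to $s=0$ for $p>1$ (the uniform gradient bound on compact subsets provides the dominating function). Hence $u$ is a weak solution, and the a priori $C^{1,\beta}(\overline\Lambda)$ bound claimed in the theorem is the one produced in the preceding paragraph. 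Uniqueness is immediate from the comparison principle (Proposition \ref{QvleQu}). The main technical point on which everything hinges is the availability of a purely interior version of Lieberman's $C^{1,\beta}$ estimate, depending only on $\sup|u|$ and the distance to $\partial\Omega$; as remarked before Theorem \ref{thth}, this is not stated explicitly in \cite{L} but follows by the same methods as the boundary estimate.
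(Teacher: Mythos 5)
Your proof is correct in substance but reorganizes the argument relative to what the paper intends. The paper gives no explicit proof of this theorem, but from the preceding text it is clear that the intended route is the one described just before Theorem~\ref{thth}: apply Corollary~\ref{cormin2} directly to the continuous datum $g$ to obtain the regularized solutions $u_\kappa\in C^{2,\alpha}(\Omega)\cap C^0(\overline\Omega)$ together with a $\kappa$-independent $C^1$ bound on each $\Lambda\Subset\Omega$, then invoke the interior version of Lieberman's estimate with the truncated coefficient $\widetilde A$ on a larger subdomain, and pass $\kappa\to0$ — a single approximation. You instead first approximate $g$ by smooth $g_m$, invoke Theorem~\ref{singmin1} for each $m$, and then, to get interior bounds independent of $m$, re-open the inner $\kappa$-approximation and apply Theorem~\ref{locmin} there; this is a nested double approximation ($\kappa\to 0$, then $m\to\infty$). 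Your route works, and you correctly observe that the crude global $C^1(\overline\Omega)$ bound of Corollary~\ref{cormmi1} would not be uniform in $m$, so that the local estimate of Theorem~\ref{locmin} (which only sees $\sup|u_{m,\kappa}|\le\sup|g_m|$) is what must be used. But this extra layer is avoidable: Corollary~\ref{cormin2} already packages exactly the uniform local $C^1$ bound you need, for arbitrary continuous data, so there is no reason to detour through smooth $g_m$. One further point your proof implicitly handles and the paper's statement obscures: applying Theorem~\ref{locmin} requires Condition~\ref{cond18-1}, which is the hypothesis of Theorem~\ref{exmincbd}, not of Theorem~\ref{exminreg}; the statement of Theorem~\ref{singmin2} almost certainly contains a typo (it should read ``all the conditions of Theorem~\ref{exmincbd}''), and your argument is only valid under that corrected reading. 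Your remark about the flux $\frac{a(|\nabla u|)}{|\nabla u|}\nabla u$ extending continuously to $\nabla u=0$ when $p>1$, which justifies passing to the limit in the weak formulation, and your appeal to the (unpublished but plausible) interior form of Lieberman's estimate are both in line with what the paper itself relies on.
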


We close this section by presenting an example where Theorems \ref{singmin1}
and \ref{singmin2} apply. This example can be seen as a $p-$area version of
the minimal surface equation, analogous to the $p-$energy of the Laplace
partial differential equation, $1<p\leq2.$

\begin{example}
We consider the integrand%
\[
\phi\left(  s\right)  =\left(  1+s^{p}\right)  ^{\frac{1}{p}}%
\]
for which we get%
\begin{align*}
a\left(  s\right)   &  =\phi^{\prime}\left(  s\right)  =\frac{s^{p-1}}{\left(
s^{p}+1\right)  ^{\frac{p-1}{p}}},\text{ }1+b(s)=\frac{sa^{\prime}(s)}%
{a(s)}=\frac{p-1}{s^{p}+1}\\
b^{\prime}(s)  &  =-\frac{p\left(  p-1\right)  s^{p-1}}{\left(  s^{p}%
+1\right)  ^{2}}.
\end{align*}
Hence we obtain%
\[
\left(  -sb^{\prime}(s)-\left(  1+\varepsilon\right)  \left(  1+b(s)\right)
\right)  s^{2}=\frac{p-1}{\left(  s^{p}+1\right)  ^{2}}\left(  \left(
p-1-\varepsilon\right)  s^{p+2}-\left(  1+\varepsilon\right)  s^{2}\right)
\geq\alpha>0
\]
for $s\geq s_{0}\left(  p,\varepsilon\right)  $ provided that $\varepsilon
<p-1$ and $p\leq2.$
\end{example}

\newpage

\section{The asymptotic Dirichlet problem}

\qquad The existence or nonexistence of non constant entire bounded harmonic
functions on a Cartan-Hadamard manifold $\mathcal{M}$ is a topic of study in
Differential Geometry that dates back to the 70's\ (see \cite{GW}, \cite{SY}).
In the last years this problem has been studied with other partial
differential equations such as the $p-$Laplacian (\cite{Ho}) and the minimal
surface equation (\cite{GR}, \cite{CHR}, \cite{CHR2}, \cite{RT}). The class of
partial differential equations considered here has been studied in \cite{RS}
with the purpose of proving Liouville type theorems that is, \emph{non
existence} of non constant bounded solutions. In this last part of our notes
we investigate the \emph{existence} of bounded non constant solutions of this
class of partial differential equations by studying the associated asymptotic
Dirichlet problem.

\subsection{Existence theorems}

\qquad A natural way of finding bounded entire solutions to a partial
differential equation on a Cartan-Hadamard manifold is by solving the
asymptotic Dirichlet problem with a prescribed non constant boundary data
given at infinity.

Recall that a Cartan-Hadamard manifold is a complete, connected and simply
connected Riemannian $n$-manifold $\mathcal{M}$, $n\geq2$, of non-positive
sectional curvature.\ By the Cartan-Hadamard theorem, the exponential map
$\exp_{o}\colon T_{o}\mathcal{M}\rightarrow\mathcal{M}$ is a diffeomorphism
for every point $o\in\mathcal{M}$. Consequently, $\mathcal{M}$ is
diffeomorphic to $\mathbb{R}^{n}$.

A Cartan-Hadamard manifold $\mathcal{M}$ can be compactified by adding a
\emph{sphere at infinity} which is also called the asymptotic boundary of
$\mathcal{M}$. The sphere at infinity of $\mathcal{M},$ denoted by
$\partial_{\infty}\mathcal{M},$ is defined as the set of all equivalence
classes $\left[  \gamma\right]  $ of unit speed geodesic rays $\gamma$ of
$\mathcal{M}$; two such rays $\gamma_{1}$ and $\gamma_{2}$ are equivalent if
$\sup_{t\geq0}d\left(  \gamma_{1}(t),\gamma_{2}(t)\right)  <\infty$. The
compactification $\overline{\mathcal{M}}$ of $\mathcal{M},$ also known as the
geometric compactification of $\mathcal{M},$ is then $\overline{\mathcal{M}%
}:=\mathcal{M}\cup\partial_{\infty}\mathcal{M},$ with the following topology.

Given $p\in\mathcal{M}$ let $B\subset\mathbb{S}^{n-1}$ be an open geodesic
ball of the unit sphere $\mathbb{S}^{n-1}$ of $T_{p}\mathcal{M}$. Given
$v\in\mathbb{S}^{n-1},$ denote by $\gamma_{v}:\left[  0,\infty\right)
\rightarrow\mathcal{M}$ the geodesic ray such that $\gamma_{v}\left(
0\right)  =p$ and $\gamma_{v}^{\prime}(0)=v.$ Then, given $t>0$ the sets%

\[
T=\left\{  \left[  \gamma_{v}\right]  \text{
$\vert$
}v\in B\right\}  \cup\left\{  \gamma_{v}\left(  \left(  t,\infty\right)
\right)  \text{
$\vert$
}v\in B\right\}
\]
with $p$ varying in $\mathcal{M}$, $B$ varying on the unit sphere of
$T_{p}\mathcal{M}$ and $t$ varying in the positive real numbers form a basis
for a topology on $\overline{\mathcal{M}},$ called the cone topology. The
space $\overline{\mathcal{M}},$ equipped with the cone topology, is
homeomorphic to a closed Euclidean ball. For more details see \cite{EO}.

The asymptotic Dirichlet problem on $\mathcal{M}$ for a differential operator
$Q$ on $\mathcal{M}$ consists in finding a (unique) function $u\in
C^{0}(\overline{\mathcal{M}})$ such that $Q\left[  u\right]  =0$ on
$\mathcal{M}$ and $u|\partial_{\infty}M=g,$ for a given function $g\in
C^{0}\left(  \partial_{\infty}\mathcal{M}\right)  $.

We consider on $\mathcal{M}$ the same family of operators already considered
here so far, namely, operators of the form
\begin{equation}
Q\left[  u\right]  =\operatorname{div}\left(  \frac{a(\lvert\nabla u\rvert
)}{\lvert\nabla u\rvert}\nabla u\right)  \label{M_equ}%
\end{equation}
where $a\in C^{2}\left(  \left(  0,\infty\right)  \right)  \cap C^{0}\left(
[0,\infty)\right)  $ satisfies conditions to be discussed on the course of the text.

For further references in the text, it is convenient to state the asymptotic
Dirichlet problem for $Q$ in the following short form%
\begin{equation}
\left\{
\begin{array}
[c]{l}%
Q\left[  u\right]  =\operatorname{div}\left(  \frac{a(\lvert\nabla u\rvert
)}{\lvert\nabla u\rvert}\nabla u\right)  =0\text{ on }\mathcal{M}\\
u|\partial_{\infty}\mathcal{M}=g,\text{ }u\in C^{1}(\mathcal{M})\cap
C^{0}(\overline{\mathcal{M}}).
\end{array}
\right.  \label{D}%
\end{equation}

We follow now closely the paper \cite{RT} of J. Ripoll and M. Telichevesky. In
the case of bounded domains, the continuous extension to the boundary of the
domain of a prospective solution to the Dirichlet problem of an elliptic
partial differential equation (for example, the one obtained by Perron's
method is typical, \cite{GT}) depends on the \textit{regularity }of the domain
with respect to the partial differential equation, that is, on the existence
of barriers at each point of the boundary of the domain (see Subsection
\ref{mapbs} and also \cite{GT}). To deal with the asymptotic Dirichlet problem
in $\mathcal{M}$ we extend this notion of regularity to the asymptotic
boundary $\partial_{\infty}\mathcal{M}$ of $\mathcal{M}$.

We consider here weak $C^{1}$ solutions to the equation $Q\left[  u\right]
=0$ in $\mathcal{M}$ that is, we require that $u\in C^{1}\left(
\mathcal{M}\right)  $ and satisfies
\begin{equation}
\int_{\mathcal{M}}\left\langle \frac{a\left(  \lvert\nabla u\rvert\right)
}{\lvert\nabla u\rvert}\nabla u,\nabla\zeta\right\rangle dx=0 \label{Qsol}%
\end{equation}
for every $\zeta\in C_{0}^{\infty}(\mathcal{M})$. A function $v\in
C^{1}\left(  \Omega\right)  $ is a subsolution of $Q$ in a domain $\Omega$ of
$\mathcal{M}$ if $Q[v]\geq0$ weakly in $\Omega$, that is
\begin{equation}
\int_{\mathcal{M}}\left\langle \frac{a\left(  \lvert\nabla v\rvert\right)
}{\lvert\nabla v\rvert}\nabla v,\nabla\zeta\right\rangle dx\leq0 \label{Qsub}%
\end{equation}
for every non-negative $\zeta\in C_{0}^{\infty}(\Omega)$. A function $w\in
C^{1}\left(  \Omega\right)  $ is called a supersolution of $Q$ in $\Omega$ if
$-w$ is a subsolution for $Q$ in $\Omega$.

Given $x\in\partial_{\infty}\mathcal{M}$ and an open subset $\Omega
\subset\mathcal{M}$ such that $x\in\partial_{\infty}\Omega$, an upper barrier
for $Q$ relative to $x$ and $\Omega$ with height $C$ is a function $w\in
C^{1}\left(  \mathcal{M}\right)  $ such that

\begin{description}
\item \textrm{(i)} $w$ is a supersolution for $Q$

\item \textrm{(ii) }$w\geq0$ and $\lim_{p\in\mathcal{M},\,p\rightarrow
x}w(p)=0$

\item \textrm{(iii)} $w_{\mathcal{M}\setminus\Omega}\geq C$.
\end{description}

\noindent Lower barriers are defined similarly.

We say that $\mathcal{M}$ is regular at infinity with respect to $Q$ if, given
$C>0$, $x\in\partial_{\infty}M$ and an open subset $W\subset\partial_{\infty
}M$ with $x\in W$, there exist an open set $\Omega\subset M$ such that
$x\in\operatorname*{Int}\partial_{\infty}\Omega\subset W$ and upper and lower
barriers $w,v\in C^{1}\left(  \mathcal{M}\right)  $ relative to $x$ and
$\Omega$, with height $C$.

The regularity at infinity has already been considered by other authors for
the $p-$Laplacian. The reader should compare the above definition with
Definition 2.6 and Theorem 2.7 in \cite{Choi} for the case of the Laplace
operator and also with Theorem 3.3 and Definition 3.4 in \cite{HV} for the
case of the $p-$Laplacian.

\begin{theorem}
\label{abcd}Let $\mathcal{M}$ be a Hadamard manifold which is regular at
infinity with respect to $Q.$ Assume moreover that

\begin{description}
\item[(a)] given $\phi\in C^{0}\left(  \mathcal{M}\right)  ,$ there is a
sequence of bounded $C^{2,\alpha}$ domains $\Omega_{k}\subset\mathcal{M},$
$k\in\mathbb{N},$ satisfying $\Omega_{k}\subset\Omega_{k+1},$ $\cup\Omega
_{k}=\mathcal{M}$ such that, given $k,$ there is a weak solution $u_{k}\in
C^{0}(\overline{\Omega_{k}})\cap C^{1}(\Omega_{k})$ of the Dirichlet problem
for $Q\left[  u\right]  =0$ in $\Omega_{k}$ such that $u_{k}|\partial
\Omega_{k}=\phi|\partial\Omega_{k}$

\item[(b)] sequences of solutions with uniformly bounded $C^{0}$ norm are
compact in the $C^{1}$ norm in precompact subsets of $\mathcal{M}$.
\end{description}

\noindent Then the asymptotic Dirichlet (\ref{D}) is solvable for any
continuous asymptotic boundary data $g\in C^{0}\left(  \partial\mathcal{M}%
_{\infty}\right)  $.
\end{theorem}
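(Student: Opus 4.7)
The plan is to obtain $u$ as the limit of solutions on an exhausting sequence of bounded domains, and then use the barriers supplied by regularity at infinity together with the comparison principle to force the correct asymptotic boundary values.

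First I would extend $g$ to a continuous function $\phi\in C^{0}(\overline{\mathcal{M}})$ using the Tietze extension theorem (valid since $\overline{\mathcal{M}}$ is homeomorphic to a closed Euclidean ball). Apply hypothesis (a) to $\phi$ to obtain the exhausting sequence $\Omega_{k}$ and solutions $u_{k}\in C^{0}(\overline{\Omega_{k}})\cap C^{1}(\Omega_{k})$ with $u_{k}|\partial\Omega_{k}=\phi|\partial\Omega_{k}$. The maximum principle (Proposition \ref{maxp}) gives the uniform bound $\sup_{\Omega_{k}}|u_{k}|\leq M_{0}:=\sup_{\overline{\mathcal{M}}}|\phi|$. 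By assumption (b) and a standard diagonal extraction, a subsequence (still denoted $u_{k}$) converges in $C^{1}_{\mathrm{loc}}(\mathcal{M})$ to some $u\in C^{1}(\mathcal{M})$. Passing to the limit in the weak formulation (\ref{Qsol}) against any test function $\zeta\in C_{0}^{\infty}(\mathcal{M})$ shows $u$ is a weak solution of $Q[u]=0$ on $\mathcal{M}$.

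The crux is to show $u$ attains the boundary value $g(x_{0})$ at each $x_{0}\in\partial_{\infty}\mathcal{M}$. Fix $\varepsilon>0$. By continuity of $\phi$ on $\overline{\mathcal{M}}$, choose an open neighborhood $U$ of $x_{0}$ in $\overline{\mathcal{M}}$ with $|\phi(y)-g(x_{0})|<\varepsilon$ for all $y\in U$, and set $W:=U\cap\partial_{\infty}\mathcal{M}$. Apply the regularity of $\mathcal{M}$ at infinity with height $C:=2M_{0}$ to get an open set $\Omega\subset\mathcal{M}$ with $x_{0}\in\mathrm{Int}\,\partial_{\infty}\Omega\subset W$ and upper/lower barriers $w,v\in C^{1}(\mathcal{M})$. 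Shrinking $\Omega$ if necessary (replacing it by $\Omega\cap U$, which preserves the barrier properties since only a smaller subset of $\mathcal{M}\setminus\Omega$ is involved), we may assume $\Omega\subset U\cap\mathcal{M}$, so that $|\phi-g(x_{0})|<\varepsilon$ on $\Omega$.

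Now I compare $u_{k}$ with the supersolution $W^{+}:=g(x_{0})+\varepsilon+w$ on $\Omega\cap\Omega_{k}$ (it is a supersolution since constants solve $Q$ and $w$ does). On $\partial\Omega\cap\Omega_{k}$ the bound $w\geq C=2M_{0}$ gives $W^{+}\geq -M_{0}+2M_{0}\geq u_{k}$, and on $\partial\Omega_{k}\cap\Omega$ one has $u_{k}=\phi\leq g(x_{0})+\varepsilon\leq W^{+}$ since $w\geq 0$. The comparison principle (Proposition \ref{QvleQu}) then yields $u_{k}\leq g(x_{0})+\varepsilon+w$ on $\Omega\cap\Omega_{k}$. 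Letting $k\to\infty$ gives $u\leq g(x_{0})+\varepsilon+w$ on $\Omega$, and the symmetric argument with the lower barrier gives $u\geq g(x_{0})-\varepsilon-v$. Since $w(p),v(p)\to 0$ as $p\to x_{0}$, we obtain $\limsup_{p\to x_{0}}u(p)\leq g(x_{0})+\varepsilon$ and $\liminf_{p\to x_{0}}u(p)\geq g(x_{0})-\varepsilon$; as $\varepsilon$ is arbitrary, $u$ extends continuously to $x_{0}$ with value $g(x_{0})$. The main obstacle is Step~3, namely arranging the barrier set $\Omega$ so that $\phi$ is simultaneously close to $g(x_{0})$ throughout $\Omega$; this is resolved by the truncation argument above using continuity of the extension $\phi$ on the compact space $\overline{\mathcal{M}}$.
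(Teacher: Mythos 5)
Your overall strategy matches the paper's: extend $g$ to $\phi\in C^{0}(\overline{\mathcal{M}})$, solve on the exhausting domains $\Omega_k$, extract a $C^1_{\mathrm{loc}}$ limit $u$ via (b) plus a diagonal argument, pass to the limit in the weak formulation, and then trap $u$ between the barriers from regularity at infinity on each $\Omega\cap\Omega_k$ via the comparison principle. Most of the proposal is sound. However there is a genuine gap in the step where you replace $\Omega$ by $\Omega\cap U$.

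The claim ``replacing $\Omega$ by $\Omega\cap U$ preserves the barrier properties since only a smaller subset of $\mathcal{M}\setminus\Omega$ is involved'' is backwards. If $\Omega':=\Omega\cap U$ then
\[
\mathcal{M}\setminus\Omega'=(\mathcal{M}\setminus\Omega)\cup\bigl((\mathcal{M}\setminus U)\cap\mathcal{M}\bigr)\supsetneq \mathcal{M}\setminus\Omega,
\]
so the barrier requirement $w|_{\mathcal{M}\setminus\Omega'}\geq C$ is \emph{stronger}, not weaker, and the definition of regularity at infinity gives you no control of $w$ on $(\mathcal{M}\setminus U)\cap\Omega$. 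Concretely, after the shrink, $\partial\Omega'\cap\Omega_k$ picks up the piece $\partial U\cap\Omega\cap\Omega_k$. On that piece neither of your two estimates is available: $w\geq C$ may fail there (those points lie in the original $\Omega$), and $|\phi-g(x_0)|<\varepsilon$ may also fail (those points are on the boundary of $U$, not in its interior). So the comparison $u_k\leq W^{+}$ on $\partial(\Omega'\cap\Omega_k)$ is not established, and the rest of the argument collapses.

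The fix is to keep the barrier set $\Omega$ exactly as supplied by the regularity condition, and instead choose $k_0$ large. Because $\phi$ is continuous on the compact space $\overline{\mathcal{M}}$, $\phi|_{\partial_\infty\mathcal{M}}=g<g(x_0)+\varepsilon/2$ on $W$, and $\partial\Omega_k$ escapes every compact subset of $\mathcal{M}$, one may find $k_0$ so that $\phi<g(x_0)+\varepsilon/2$ on $\partial\Omega_k\cap\Omega$ for all $k\geq k_0$ (and similarly $\phi>g(x_0)-\varepsilon/2$). Then the comparison on $V_k=\Omega\cap\Omega_k$ reads: on $\partial\Omega\cap\Omega_k$ you use the barrier bound $w\geq C$, and on $\partial\Omega_k\cap\Omega$ you use the smallness of $\phi-g(x_0)$ obtained from continuity. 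This is precisely how the paper's proof proceeds, and it avoids tampering with $\Omega$. (A minor point in your favor: the paper sets the barrier height $C=2\max_{\overline{\mathcal{M}}}|g|$, while the bound $\sup|u_k|\leq\sup_{\overline{\mathcal{M}}}|\phi|$ from the maximum principle may exceed $\max|g|$; your choice $C=2M_0=2\sup_{\overline{\mathcal{M}}}|\phi|$ is the correct one.)
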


\begin{proof}
Let $\phi\in C^{0}(\overline{M})$ be a continuous extension to $\overline{M}$
of the asymptotic boundary data $g$ of problem (\ref{D}). From condition
\textbf{(a)} there is a solution $u_{k}\in C^{0}(\overline{\Omega_{k}})\cap
C^{1}(\Omega_{k})$ of the Dirichlet problem%
\[
\left\{
\begin{array}
[c]{l}%
\mathcal{Q}[u]=0\text{ in }\Omega_{k},\text{ }\\
u|\partial\Omega_{k}=\phi|\partial\Omega_{k}.
\end{array}
\right.
\]
Condition \textbf{(b)} together with the diagonal method show that there
exists a subsequence of $(u_{k})$ converging uniformly on compact subsets of
$\mathcal{M}$ in the $C^{1}$ norm to a global solution $u\in C^{1}%
(\mathcal{M})$ of $Q\left[  u\right]  =0$. From the comparison principle it
follows that
\[
\sup_{\mathcal{M}}\left\vert u\right\vert \leq\sup_{\mathcal{M}}\left\vert
\phi\right\vert .
\]

One needs to show that $u$ extends continuously to $\partial_{\infty
}\mathcal{M}$ and satisfies $u|\partial_{\infty}\mathcal{M}=g$. Let
$x\in\partial_{\infty}\mathcal{M}$ and $\varepsilon>0$ be given.

Since $g$ is continuous, there exists an open neighborhood $W\subset
\partial_{\infty}\mathcal{M}$ of $x$ such that $g(y)<g(x)+\varepsilon/2$ for
all $y\in W$. Furthermore, regularity of $\partial_{\infty}\mathcal{M}$
implies that exists an open subset $\Omega\subset\mathcal{M}$ such that
$x\in\operatorname*{Int}\left(  \partial_{\infty}\Omega\right)  \subset W$ and
$w:\mathcal{M}\rightarrow\mathbb{R}$ upper barrier with respect to $x$ and
$\Omega$ with height $C:=2\max_{\overline{M}}|g|$.

Defining
\[
v(p):=w(p)+g(x)+\varepsilon,\text{ }p\in\Omega,
\]
we claim that $u\leq v$ in $\Omega$.

From the continuity of $\phi$ we may find $k_{0}$ such that $\phi
(p)<g(x)+\varepsilon/2$ for all $p\in\partial\Omega_{k}\cap\Omega$, $k\geq
k_{0}$. Moreover, we may choose $k_{0}$ such that $\Omega_{k_{0}}\cap
\Omega\neq\emptyset$. Set $V_{k}:=\Omega\cap\Omega_{k}$, $k\geq k_{0}$. We
note that $u_{k}\leq v$ in $V_{k}$. Indeed, this inequality holds on
\[
\partial V_{k}=\overline{\left(  \partial\Omega_{k}\cap\Omega\right)  }%
\cup\overline{\left(  \partial\Omega\cap\Omega_{k}\right)  }.
\]
On $\partial\Omega_{k}\cap\Omega$ is due to the choice of $k_{0}$; on
$\partial\Omega\cap\Omega_{k}$ it holds because $w\geq\max|g|$ on
$\partial\Omega$, which implies that $w\geq u_{k}$, by the comparison
principle. Also the comparison principle implies that $u_{k}\leq v$ in $V_{k}%
$. Since it holds for all $k\geq k_{0}$, we have $u\leq v$ on $\Omega$.

It is also possible to define $v_{-}:\mathcal{M}\rightarrow\mathbb{R}$ by
$v_{-}(p):=\varphi(x)-\varepsilon-w(p)$ in order to obtain $u\geq v_{-}$ in
$\Omega$. We then have%
\[
\left\vert u(p)-\varphi(x)\right\vert <\varepsilon+w(p),\,
\]
for all $p\in\Omega$ and hence
\[
\limsup_{p\rightarrow x}\left\vert u(p)-\varphi(x)\right\vert \leq
\varepsilon.
\]
The proof is complete, since $\varepsilon>0$ is arbitrary.
\end{proof}

\bigskip

Theorem \ref{abcd} brings up the problem of when the Hadamard manifold
$\mathcal{M}$ is regular at infinity. The following definition, introduced in
\cite{RT}, turned out to be a key concept.

\begin{definition}
\label{sc}Let $\mathcal{M}$ be a Hadamard manifold. We say that $\mathcal{M}$
satisfies the \emph{strict convexity condition (SC condition)} if, given
$x\in\partial_{\infty}\mathcal{M}$ and a relatively open subset $W\subset
\partial_{\infty}\mathcal{M}$ containing $x,$ there exists a $C^{2}$ open
subset $\Omega\subset\overline{\mathcal{M}}$ such that $x\in
\operatorname*{Int}\left(  \partial_{\infty}\Omega\right)  \subset W,$ where
$\operatorname*{Int}\left(  \partial_{\infty}\Omega\right)  $ denotes the
interior of $\partial_{\infty}\Omega$ in $\partial_{\infty}\mathcal{M},$ and
$\mathcal{M}\setminus\Omega$ is convex.
\end{definition}

Informally speaking one may say that, as it happens with strictly convex
bounded domains of the Euclidean space, $\mathcal{M}$ satisfies the SC
condition when one can extract from $\overline{\mathcal{M}}$ a neighborhood of
any point of $\partial_{\infty}\mathcal{M}$ such that what remains is still convex.

\begin{lemma}
\label{barriers} Let $\mathcal{M}$ be a Hadamard manifold with sectional
curvature $K_{M}\leq-k^{2}<0$ and satisfying the SC condition. Assume that%
\begin{equation}
Q\left[  u\right]  =\mathrm{{\operatorname{div}}}\left(  \frac{a(|\nabla
u|)}{|\nabla u|}\nabla u\right)  =0 \label{Qdefinition}%
\end{equation}
with $a\in C^{0}\left(  \left[  0,\infty\right)  \right)  \cap C^{1}\left(
\left(  0,\infty\right)  \right)  $ satisfies
\begin{align}
&  a(0)=0,a^{\prime}(s)>0\text{ for all }s>0;\label{a1}\\
&  \text{there exist }q>0\text{ and }\delta>0\text{ such that }a(s)\geq
s^{q},\text{ }s\in\left[  0,\delta\right]  . \label{a3}%
\end{align}
Then $\mathcal{M}$ is regular at infinity with respect to $Q$.
\end{lemma}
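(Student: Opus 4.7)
The plan is to construct an explicit upper barrier $w$ relative to $x$, the prescribed neighborhood $W \subset \partial_\infty \mathcal{M}$, and the prescribed height $C > 0$; the lower barrier then follows by observing that $Q[-w] = -Q[w]$, so $v := -w$ is automatically a subsolution whenever $w$ is a supersolution. First I apply the SC condition to extract an open $\Omega \subset \overline{\mathcal{M}}$ with $x \in \mathrm{Int}(\partial_\infty \Omega) \subset W$ such that $K := \mathcal{M} \setminus \Omega$ is a closed convex subset of $\mathcal{M}$, and I set $\rho(p) := d_\mathcal{M}(p, K)$. Since $K$ is convex in the Hadamard manifold $\mathcal{M}$, $\rho$ is Lipschitz on $\mathcal{M}$, smooth on $\mathcal{M} \setminus K$ with $|\nabla \rho| \equiv 1$ and $\nabla^2 \rho \geq 0$; the topological condition $x \in \mathrm{Int}(\partial_\infty \Omega)$ guarantees that $\rho(p) \to +\infty$ as $p \to x$ within $\Omega \cap \mathcal{M}$.

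The curvature bound $K_\mathcal{M} \leq -k^2$ together with the Hessian comparison theorem for the distance function to a convex subset of a Hadamard manifold gives
\[
\Delta \rho \;\geq\; (n-1)\, k\, \coth(k\rho) \;\geq\; (n-1)\,k \qquad \text{on } \mathcal{M}\setminus K.
\]
Guided by this, I look for the barrier in the form $w(p) := h(\rho(p))$ on $\mathcal{M}\setminus K$, extended by $h(0)$ on $K$, for a $C^1$ function $h \colon [0,\infty) \to [0,\infty)$ satisfying $h(0) = C$, $h(\infty) = 0$, $h' \leq 0$, and $h'(0) = 0$; the last condition ensures $C^1$ regularity of $w$ across $\partial K$, where $\nabla \rho$ jumps from the zero vector (inside $K$) to a unit vector. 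Writing $\phi := -h' \geq 0$ and using $|\nabla\rho|=1$, $\nabla^2\rho(\nabla\rho,\nabla\rho)=0$, a direct computation yields
\[
Q[w] \;=\; a'(\phi)\,(-\phi') \;-\; a(\phi)\, \Delta\rho,
\]
so on the region $\{\phi > 0\}$ the supersolution inequality $Q[w] \leq 0$ reduces, after dividing by $a(\phi) > 0$, to
\[
-\frac{d}{dr}\ln a(\phi(r)) \;\leq\; \Delta\rho,
\]
which it is enough to enforce with $\Delta\rho$ replaced by the model lower bound $(n-1)k\coth(kr)$.

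I build $\phi$ in three phases. On $[0, r_1]$, take a smooth increasing ramp from $\phi(0) = 0$ up to $\phi(r_1) = \sigma$ for a small $\sigma \in (0,\delta)$; since $\phi' \geq 0$ here, the left-hand side above is non-positive and the inequality is trivially satisfied. On $[r_1, r_2]$, set $\phi \equiv \sigma$, so $\phi' \equiv 0$ and the inequality again holds trivially. On $[r_2, \infty)$, define $\phi$ by saturating the comparison ODE
\[
-\frac{d}{dr}\ln a(\phi(r)) \;=\; (n-1)\,k\,\coth(kr),
\]
whose integration with initial value $\phi(r_2) = \sigma$ gives
\[
a(\phi(r)) \;=\; a(\sigma)\left[\frac{\sinh(k r_2)}{\sinh(k r)}\right]^{n-1},
\]
which forces $\phi(r) \to 0$ as $r \to \infty$. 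Hypothesis (\ref{a3}) yields $a^{-1}(t) \leq t^{1/q}$ for small $t$, hence
\[
\phi(r) \;\leq\; a(\sigma)^{1/q}\left[\frac{\sinh(k r_2)}{\sinh(k r)}\right]^{(n-1)/q}
\]
for large $r$; this decays exponentially and is integrable on $[r_2, \infty)$.

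The total mass $\int_0^\infty \phi(s)\,ds$ splits into a fixed ramp contribution, the plateau $\sigma(r_2 - r_1)$, and a finite tail; keeping $r_1$ and $\sigma$ fixed and tuning $r_2 \geq r_1$ lets this sum sweep all of $(0, \infty)$, so one can select the parameters with $\int_0^\infty \phi = C$. Setting $h(r) := C - \int_0^r \phi(s)\,ds$ then produces the required $C^1$ function, and $w := h \circ \rho$ (extended by $C$ on $K$) is the upper barrier, with $v := -w$ the lower. The main obstacle is the construction of $h$ itself: the requirement $w \in C^1(\mathcal{M})$ forces $h'(0)=0$, which is in tension with $h$ being eventually strictly decreasing to $0$, and simultaneously the supersolution inequality must hold everywhere on $\{\rho > 0\}$ with the prescribed total drop $C$. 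The three-phase piecewise design reconciles these demands while invoking hypothesis (\ref{a3}) only in the decay regime, where $\phi$ is small and $a(s)$ is known to dominate $s^q$.
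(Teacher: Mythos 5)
Your construction follows the same blueprint as the paper's: use the SC condition to peel off $\Omega$ with convex complement $K$, build a radial barrier $w = h(\rho)$ from $\rho = d(\cdot,K)$, and pin $h$ down by an ODE driven by the Laplacian comparison for $\rho$. Your three-phase design of $\phi=-h'$ (ramp, plateau, decay) with $\phi(0)=0$ is a genuine refinement: it produces a bona fide $C^1$ barrier across $\partial K$, whereas the paper's $w=\min\{g\circ s,C\}$ is only Lipschitz at the transition set.

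However, there is a concrete error in the Hessian comparison you invoke. For the distance $\rho$ to a \emph{convex subset} of a Hadamard manifold with $K_{\mathcal M}\le -k^2$ the correct lower bound, the one the paper cites from Choi, is
\[
\Delta\rho \ge (n-1)\,k\,\tanh(k\rho),
\]
not $(n-1)k\coth(k\rho)$. The $\coth$ comparison belongs to the distance to a \emph{point}; for a convex set the second fundamental form of $\partial K$ can vanish, so the Riccati comparison $s' \ge -s^2+k^2$ starts from $s(0)\ge 0$ and its minimal solution is $k\tanh(kt)$. In particular your subsidiary inequality $\Delta\rho\ge(n-1)k$ is false near $\partial K$, since $\tanh(k\rho)\to 0$ as $\rho\to 0$. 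This error is material: in the decay phase you saturate the ODE with $(n-1)k\coth(kr)$ on the premise that this is a lower bound for $\Delta\rho$, so that the supersolution inequality $-\tfrac{d}{dr}\ln a(\phi)\le\Delta\rho$ would hold automatically. But if $\mathcal M$ is the model hyperbolic space with $K$ a half-space, then $\Delta\rho = (n-1)k\tanh(k\rho) < (n-1)k\coth(k\rho)$, and your $w$ is not a supersolution. (The ramp and plateau phases only need $\Delta\rho\ge 0$, which is safe.) The fix is minimal: replace $\coth$ by $\tanh$ throughout; the decay ODE then yields
\[
a(\phi(r)) = a(\sigma)\left[\frac{\cosh(kr_2)}{\cosh(kr)}\right]^{\,n-1},
\]
which still decays like $e^{-(n-1)kr}$, so the estimate $a^{-1}(t)\le t^{1/q}$ coming from (\ref{a3}) continues to give exponential decay and integrability of $\phi$, and the rest of your argument (tuning $r_2$ so that $\int_0^\infty\phi = C$, and taking $v=-w$ as the lower barrier) goes through unchanged.
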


\begin{proof}
Let $C>0$ and $x\in W\subset\partial_{\infty}\mathcal{M}$ be given. Since
$Q\left[  -u\right]  =-Q\left[  u\right]  $ it is enough to prove the
existence of barriers from above at $x.$ Since $\mathcal{M}$ satisfies the SC
condition, there exists a $C^{2}$ open subset $\Omega$ of $\mathcal{M}$ such
that $x\in\operatorname*{Int}\left(  \partial_{\infty}\Omega\right)  \subset
W$ and such that $\mathcal{M}\setminus\Omega$ is convex. Let $s:\Omega
\rightarrow\mathbb{R}$ be the distance function to $\partial\Omega$. Since
$\mathcal{M}\setminus\Omega$ is convex and $K_{M}\leq-k^{2}$, $k>0,$ we may
apply comparison theorems (see Theorems 4.2 and 4.3 of \cite{Choi}) to obtain
the estimate
\begin{equation}
\Delta s\geq(n-1)k\tanh ks. \label{Deltas}%
\end{equation}
On the other hand, since $a^{\prime}>0$, $a$ has an inverse function
$a^{-1}\in C^{1}\left(  \left[  0,\alpha\right)  \right)  $ where $\alpha=\sup
a\leq\infty$. Set
\[
c=\frac{a(2C)}{\cosh^{n-1}k}.
\]

Since $0\leq c\cosh^{1-n}kt\leq\alpha$ for all $t\geq0$ we may define a
function $g:\left[  0,\infty\right)  \rightarrow\mathbb{R},$ $g\in
C^{2}\left(  \left(  0,\infty\right)  \right)  ,$ possibly with $g(0)=\infty,$
by
\begin{equation}
g(s):=\int_{s}^{\infty}a^{-1}\left(  c\cosh^{1-n}kt\right)  dt. \label{g}%
\end{equation}

Without loss of generality, we may choose $\delta$ small such that $a\left(
\delta\right)  <c.$ Then, since
\[
c\cosh^{1-n}0=c>a(\delta)
\]
and $\lim_{t\rightarrow+\infty}c\cosh^{1-n}kt=0$ there is $\tau$ satisfying
\[
c\cosh^{1-n}k\tau=a\left(  \delta\right)  .
\]

Since for $t\in\left[  0,a\left(  \delta\right)  \right]  $ we have
$a^{-1}(t)\in\left[  0,\delta\right]  $ it follows from (\ref{a3}) that
\[
t=a\left(  a^{-1}\left(  t\right)  \right)  \geq a^{-1}(t)^{q}%
\]
and hence $a^{-1}(t)\leq t^{1/q}$ for $t\in\lbrack0,a\left(  \delta\right)
]$. Therefore
\begin{align*}
g(s)  &  \leq\int_{0}^{\tau}a^{-1}(c\cosh^{1-n}kt)dt+\int_{\tau}^{+\infty
}a^{-1}(c\cosh^{1-n}kt)dt\\
&  \leq a^{-1}(c)\tau+\int_{\tau}^{+\infty}(c\cosh^{1-n}kt)^{\frac{1}{q}}dt\\
&  \leq a^{-1}(c)\tau+(2c)^{\frac{1}{q}}\int_{\tau}^{+\infty}e^{-\frac{kt}{q}%
}dt=a^{-1}(c)\tau+\frac{(2c)^{\frac{k}{q}}}{q}e^{-k\tau}<+\infty
\end{align*}
for all $s>0$. Furthermore,
\[
g(0)>\int_{0}^{1}a^{-1}\left(  c\cosh^{1-n}kt\right)  dt\geq a^{-1}%
(c\cosh^{1-n}k)=2C
\]
and $\lim_{s\rightarrow\infty}g(s)=0$. Therefore we define $v:\Omega
\rightarrow\mathbb{R}$ as
\[
v(p):=g(s(p)),
\]
and will prove that $Q(v)\leq0$. We have%
\[
\nabla v(p)=g^{\prime}(s(p))\nabla s(p)=-a^{-1}\left(  c\cosh^{1-n}%
ks(p)\right)  \nabla s
\]
and then
\[
|\nabla v|=|g^{\prime}(s)|=a^{-1}\left(  c\cosh^{1-n}ks\right)  .
\]
Also $\nabla v/|\nabla v|=-\nabla s$. Combining the previous expressions, we obtain%

\begin{align*}
Q\left[  v\right]   &  =\operatorname{div}\mathrm{{\,}}\left(  -a\left(
|g^{\prime}(s)|\right)  \nabla s\right) \\
&  =\operatorname{div}\mathrm{{\,}}\left(  -a\left(  a^{-1}\left(
c\cosh^{1-n}ks\right)  \right)  \nabla s\right) \\
&  =\operatorname{div}\left(  -c\cosh^{1-n}ks\nabla s\right) \\
&  =-(1-n)ck\left(  \cosh^{-n}ks\right)  \left(  \sinh ks\right)
\langle\nabla s,\nabla s\rangle-c\left(  \cosh^{1-n}ks\right)  \Delta s\\
&  \leq(n-1)ck\left(  \cosh^{-n}ks\right)  \left(  \sinh ks\right) \\
&  -(n-1)c\left(  \cosh^{1-n}ks\right)  k\left(  \tanh ks\right)  =0,
\end{align*}
and hence, by Lemma \ref{QvleQu}, $v$ is a supersolution on $\Omega$.

To finish with the proof, define the global supersolution $w\in C^{0}\left(
\overline{\mathcal{M}}\right)  $ by
\[
w(p)=\left\{
\begin{array}
[c]{ll}%
\min\left\{  v(p),C\right\}  & \text{if }p\in\Omega\\
C & \text{if }p\in\overline{\mathcal{M}}\setminus\Omega,
\end{array}
\right.
\]
which is of course an upper barrier relative to $x$ and $\Omega$ with height
$C$.
\end{proof}

As a consequence of Theorems \ref{abcd} and \ref{barriers} we obtain

\begin{theorem}
\label{scab} Let $\mathcal{M}$ be a Hadamard manifold with sectional curvature
satisfying $K_{M}\leq-k^{2}<0$. Assume that $\mathcal{M}$ satisfies the SC
condition together with the conditions (\ref{a1}) and (\ref{a3}) of Lemma
\ref{barriers}, and that $Q$ satisfies conditions \emph{\textbf{(a)}} and
\emph{\textbf{(b) }}of Theorem \ref{abcd}. Then the asymptotic Dirichlet
problem (\ref{D}) is solvable for any $g\in C^{0}\left(  \partial_{\infty
}\mathcal{M}\right)  $.
\end{theorem}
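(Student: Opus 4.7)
The statement is essentially a corollary, so my plan is a direct two-step concatenation of the two preceding results, and the main task is to verify that the hypotheses of Theorem \ref{abcd} and Lemma \ref{barriers} are indeed met by the hypotheses of Theorem \ref{scab}.

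First, I would invoke Lemma \ref{barriers}. The hypotheses of that lemma are: $\mathcal{M}$ is a Hadamard manifold with sectional curvature bounded above by $-k^2<0$, $\mathcal{M}$ satisfies the SC condition, and the coefficient $a$ appearing in $Q$ satisfies conditions (\ref{a1}) and (\ref{a3}). All four of these are exactly among the assumptions of Theorem \ref{scab}. Thus Lemma \ref{barriers} applies and yields that $\mathcal{M}$ is regular at infinity with respect to the operator $Q$; i.e., for every $C>0$, every $x\in\partial_\infty\mathcal{M}$, and every relatively open $W\subset\partial_\infty\mathcal{M}$ containing $x$, one can find an open set $\Omega\subset\mathcal{M}$ with $x\in\operatorname{Int}\partial_\infty\Omega\subset W$ and $C^1$ upper/lower barriers of height $C$ relative to $x$ and $\Omega$.

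Second, I would invoke Theorem \ref{abcd}. That theorem requires $\mathcal{M}$ to be regular at infinity with respect to $Q$ (just established in the first step), plus conditions \textbf{(a)} (solvability of the Dirichlet problem on an exhausting sequence of smooth bounded domains with prescribed continuous boundary data) and \textbf{(b)} (compactness in $C^1$ of bounded families of solutions on precompact subdomains). Both \textbf{(a)} and \textbf{(b)} are assumed in the hypothesis of Theorem \ref{scab}. Thus Theorem \ref{abcd} applies and provides, for any given $g\in C^0(\partial_\infty\mathcal{M})$, a solution $u\in C^1(\mathcal{M})\cap C^0(\overline{\mathcal{M}})$ of $Q[u]=0$ with $u|_{\partial_\infty\mathcal{M}}=g$, which is the statement of Theorem \ref{scab}.

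There is essentially no obstacle; the work has been done in Lemma \ref{barriers} (barrier construction via a radial integral involving $a^{-1}$ together with the comparison estimate $\Delta s\geq (n-1)k\tanh ks$ on the distance function, exploiting convexity of $\mathcal{M}\setminus\Omega$ and the curvature upper bound) and in Theorem \ref{abcd} (the diagonal extraction of a subsequence of domain solutions $u_k$ and the squeeze argument at each boundary point $x\in\partial_\infty\mathcal{M}$ using the barriers). The only real checking needed is cosmetic: that the barriers produced by Lemma \ref{barriers} are of the exact type (upper and lower, $C^1$, with prescribed height $C$) demanded in the definition of regularity at infinity used in Theorem \ref{abcd}, which is precisely the definition these notes adopted, so there is no mismatch. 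If any step could be called delicate it would be this consistency of definitions, but as stated the two results are set up to chain directly, so the proof reduces to citing them in order.
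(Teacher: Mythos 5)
Your proof is correct and matches the paper's own treatment: the paper gives no separate argument for Theorem \ref{scab}, simply stating it ``as a consequence of Theorems \ref{abcd} and \ref{barriers},'' and your two-step chain (Lemma \ref{barriers} to establish regularity at infinity, then Theorem \ref{abcd} to solve the asymptotic Dirichlet problem) is exactly that. Your hypothesis-checking is a bit more explicit than the paper's one-line remark, but the content is the same.
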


\begin{theorem}
\label{coradp1}We assume that the operator%
\[
Q\left[  u\right]  =\operatorname{div}\frac{a\left(  \left\vert \nabla
u\right\vert \right)  }{\left\vert \nabla u\right\vert }\nabla u
\]
satisfies the following conditions (i) and \textbf{either }(ii) \textbf{or }(ii'):

(i) There are numbers $p>1$ and $\alpha\in\left(  0,1\right)  $ such that
$a(s)=s^{p-1}A(s)$ with $A\in C^{1,\alpha}\left(  \left[  0,\infty\right)
\right)  \cap C^{2,\alpha}\left(  \left(  0,\infty\right)  \right)  \ $and%
\[
\min_{0\leq s\leq\sigma}\left\{  A(s),\min\left\{  1,p-1\right\}
+\frac{sA^{\prime}(s)}{A(s)}\right\}  >0
\]
for any $\sigma>0$

(ii) There are numbers $\beta>0,$ $\gamma>0$ and $s_{0}>0$ such that with%
\begin{align*}
b(s)  &  :=\frac{sA^{\prime}(s)}{A(s)}\\
B(s)  &  :=\max\left\{  1,1+b(s)\right\}
\end{align*}
there holds
\[
B(s)^{-1}\left(  1+b(s)-\beta\left\vert b^{\prime}(s)\right\vert s\right)
\geq\gamma
\]
for $s\geq s_{0}.$

(ii') There are numbers $\varepsilon>0,$ $\gamma>0$ and $s_{0}>0$ such that
\[
\left(  -sb^{\prime}(s)-\left(  1+\varepsilon\right)  \left(  1+b(s)\right)
\right)  s^{2}\geq\gamma
\]
for $s\geq s_{0}.$

Then, if the Cartan-Hadamard manifold $\mathcal{M}$ has sectional curvature
$K_{\mathcal{M}}\leq-k^{2}$ for some number $k>0$ and satisfies the SC
condition (Definition \ref{sc}), the asymptotic Dirichlet problem%
\[
\left\{
\begin{array}
[c]{l}%
Q\left[  u\right]  =0\text{ on }\mathcal{M}\\
u|\partial_{\infty}\mathcal{M}=g
\end{array}
\right.
\]
has a unique solution $u\in C^{0}\left(  \overline{\mathcal{M}}\right)  \cap
C^{1}\left(  \mathcal{M}\right)  $ which, for each relatively compact
subdomain $\Omega\subset\mathcal{M}$ belongs to $C^{1,\lambda}\left(
\Omega\right)  $ with $\lambda>0$ possibly depending on $\Omega.$ In case
$p=2$ in (i) above the solution $u$ is classical, i.e., $u\in C^{2,\alpha
}\left(  \mathcal{M}\right)  .$
\end{theorem}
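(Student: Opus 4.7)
The plan is to apply Theorem \ref{scab}. Since the curvature bound $K_{\mathcal{M}}\leq -k^{2}$ and the SC condition hold by hypothesis, the task reduces to verifying the structural conditions (\ref{a1}) and (\ref{a3}) of Lemma \ref{barriers} together with hypotheses \textbf{(a)} and \textbf{(b)} of Theorem \ref{abcd}. The structural conditions are immediate from (i): from $p>1$ and $A(0)>0$ one has $a(0)=0$, while $a'(s)=s^{p-2}A(s)\bigl[(p-1)+sA'(s)/A(s)\bigr]>0$ on $(0,\infty)$ because $(p-1)+sA'/A\geq\min\{1,p-1\}+sA'/A>0$ by (i); moreover $a(s)\geq s^{p}$ holds on a small interval $[0,\delta]$ by continuity of $A$ at $0$.

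For \textbf{(a)} I take the exhaustion $\Omega_{k}=B(o,k)$ by geodesic balls centered at a fixed $o\in\mathcal{M}$. Because $\mathcal{M}$ is Cartan--Hadamard with $K_{\mathcal{M}}\leq -k^{2}$, the Laplacian comparison theorem gives $\Delta\rho\geq (n-1)k\coth(k\rho)>0$ for $\rho=d(\cdot,o)$, so the distance function to $\partial\Omega_{k}$ satisfies $\Delta d\leq 0$ in an inner collar of $\partial\Omega_{k}$; this is precisely the mean convexity hypothesis (\ref{mconv}) for $\partial\Omega_{k}$ and its parallel hypersurfaces. Under (ii) the Dirichlet problem on $\Omega_{k}$ with continuous boundary data is then solvable by Theorem \ref{thth}, while under (ii') it is solvable by Theorem \ref{singmin2}. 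In either case the solution $u_{k}\in C^{0}(\overline{\Omega_{k}})$ has $C^{1,\beta}(\overline{\Lambda})$ regularity on each $\Lambda\Subset\Omega_{k}$, which verifies \textbf{(a)}.

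For \textbf{(b)} I appeal to the local gradient estimates of Section \ref{glge}. Hypothesis (ii) is precisely Condition \ref{cond10}, so Theorem \ref{locplapl} supplies a local $C^{1}$ bound in terms of the $C^{0}$ norm and local geometry; hypothesis (ii') is Condition \ref{cond18-1}, and Theorem \ref{locmin} plays the analogous role. These estimates are derived for $C^{3}$ solutions, hence in the singular/degenerate range $p\neq 2$ they must be applied to the regularized solutions $u_{k,\kappa}$, which are $C^{2,\alpha}$ and $C^{3}$ on $\{|\nabla u|>0\}$, uniformly in $\kappa\in(0,1]$ (respectively $\kappa\in(0,\kappa_{0}(p)]$ under (ii')), and then passed to the limit $\kappa\to 0$ exactly as in the proofs of Theorems \ref{thth} and \ref{singmin2}. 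A covering argument combined with the local $C^{1,\beta}$ theorem of Lieberman upgrades these bounds to uniform $C^{1,\beta}$ bounds on any fixed $\Lambda\Subset\mathcal{M}$, and Arzela--Ascoli then supplies the $C^{1}$-convergent subsequence required by \textbf{(b)}. When $p=2$ the operator is already nondegenerate elliptic and no regularization is needed; the Schauder theory invoked in Section \ref{et} then yields $u\in C^{2,\alpha}(\mathcal{M})$.

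The principal technical obstacle is the bookkeeping of the $\kappa$-approximation: the local $C^{1}$ identities of Section \ref{glge} are classical and their constants' uniformity in $\kappa$ has been checked piecewise in Section \ref{et}, but weaving these uniformities together with the SC-barrier construction of Lemma \ref{barriers} into a single coherent compactness argument on an increasing exhaustion, and simultaneously controlling the boundary behavior at $\partial_{\infty}\mathcal{M}$, is the step that demands most care.
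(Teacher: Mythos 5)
Your proof is correct and follows essentially the same route as the paper's: reduce to Theorem \ref{scab}, verify (\ref{a1}) and (\ref{a3}) from (i), build the exhaustion by geodesic balls, and obtain hypotheses \textbf{(a)} and \textbf{(b)} of Theorem \ref{abcd} from Theorems \ref{thth} and \ref{singmin2}. The only thing the paper states that you omit is the one-line observation that uniqueness follows from the comparison principle (Proposition \ref{QvleQu}); otherwise your version just fills in more of the $\kappa$-uniformity and mean-convexity details that the paper leaves implicit.
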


\begin{proof}
We observe that the uniqueness follows immediately from Proposition
\ref{QvleQu}. For the existence part, choose a fixed point of $\mathcal{M}$,
say $o,$ and, given $k\geq1,$ let $\Omega_{k}$ be the geodesic ball of
$\mathcal{M}$ centered at $o$ and with radius $k.$ Since $\mathcal{M}$ is a
Cartan Hadamard manifold $\Omega_{k}$ is a $C^{2,\alpha}$ domain for all $k$
and, from the Hessian comparison theorem, $\Omega_{k}$ is a convex and hence a
mean convex domain for all $k\geq1$ (Theorems 4.2 and 4.3 of \cite{Choi}).
Conditions (a) and (b) in Theorem \ref{abcd} are satisfied as follows from the
Theorems\ \ref{thth} and \ref{singmin2}. The conditions \ref{a1} and \ref{a3}
of Lemma \ref{barriers} clearly hold by assumption (i). Thus, Theorem
\ref{coradp1} is an immediate consequence of Theorem \ref{scab}.
\end{proof}

\subsection{Final comments}

\qquad Let $\mathcal{M}$ be a Hadamard manifold. If $\dim\mathcal{M}=2$ and
the sectional curvature of $\mathcal{M}$ satisfies $K_{\mathcal{M}}\leq
-k^{2},$ $k>0,$ then, since any two points at infinity of $\mathcal{M}$ can be
connected by a geodesic, it trivially follows that $\mathcal{M}$ satisfies the
SC condition. In arbitrary dimensions it is proved in \cite{RT} that if
$K_{\mathcal{M}}\leq-k^{2}$ and either $\mathcal{M}$ is a rotationally
symmetric (see \cite{Choi}) or the sectional curvature of $\mathcal{M}$ decays
at most exponentially then $\mathcal{M}$ satisfies the SC condition. The SC
condition also holds in Hadamard manifolds where the sectional curvature may
go to zero with a certain rate, but assuming stronger assumptions on the decay
of the curvature. The asymptotic Dirichlet problem can be solved in some of
these manifolds by using barriers at infinity others than those of Lemma
\ref{barriers} (see \cite{CHR3}).

We finally should remark that in dimensions greater than or equal to $3$ just
an upper bound for the sectional curvature of $\mathcal{M}$ is not enough for
the solvability of the asymptotic Dirichlet problem in $\mathcal{M}.$ Indeed,
Ancona \cite{Anc} and Borbely \cite{B2} construct examples of $3-$dimensional
Hadamard manifolds with curvature less than or equal to $-1$ such that if an
entire harmonic function extends continuously to the asymptotic boundary then
it is constant. Holopainen extended Borbely's example to the $p-$Laplacian
\cite{H2} and in \cite{HR} the authors constructed similar counter-examples of
manifolds for a class of partial differential equations that includes the
$p-$Laplacian and minimal surface equation. Therefore, in these manifolds,
because of Theorem \ref{scab}, the SC condition is not satisfied.

\newpage

\end{document}